\newtheorem{theorem}{Theorem}
\newtheorem{lemma}[theorem]{Lemma}
\theoremstyle{definition}
\newtheorem{definition}{Definition}
\newtheorem{example}{Example}
\def\holder{H\"{o}lder }
\def\pct[#1]{\ensuremath{#1^{\text{th}}}}
\def\pctst[#1]{\ensuremath{#1^{\text{st}}}}
\algnewcommand\algorithmicinput{\textbf{INPUT:}}
\algnewcommand\INPUT{\item[\algorithmicinput]}
\algnewcommand\algorithmicoutput{\textbf{OUTPUT:}}
\algnewcommand\OUTPUT{\item[\algorithmicoutput]}
\DeclarePairedDelimiter\ceil{\lceil}{\rceil}
\DeclarePairedDelimiter\floor{\lfloor}{\rfloor}
\theoremstyle{plain}
\newtheorem{assumption}{Assumption}
\title{Quantile  regression  with ReLU  Networks: Estimators and minimax rates}
\author[1]{Oscar Hernan Madrid Padilla}
\author[2]{Wesley Tansey}
\author[3]{Yanzhen Chen}
\affil[1]{\small Department of Statistics, University of California, Los Angeles}
\affil[2]{\small Department of Epidemiology and Biostatistics, Memorial Sloan Kettering}
\affil[3]{\small Department of ISOM, Hong Kong University of Science and Technology }
\begin{document}
	\maketitle
	
	\begin{abstract}
		
		Quantile regression is the task of estimating a specified percentile response, such as the median (\pct[50] percentile), from a collection of known covariates.
		We study quantile regression with rectified linear unit (ReLU) neural networks as the chosen model class. We derive an upper bound on the expected mean squared error of a ReLU network used to estimate any quantile conditioning on a set of covariates. This upper bound only depends on the best possible approximation error, the number of layers in the network, and the number of nodes per layer.
		We further show  upper bounds that are tight for two large classes of functions: compositions of \holder functions and members of a Besov space. These tight bounds imply ReLU networks with  quantile regression achieve minimax rates for  broad collections of function types.
		Unlike existing  work, the theoretical results hold under minimal assumptions and apply to general error distributions, including heavy-tailed distributions. Empirical simulations on a suite of synthetic response functions demonstrate the theoretical results translate to practical implementations of ReLU networks.
		Overall, the theoretical and empirical results provide insight into the strong performance of ReLU neural networks for quantile regression across a broad range of  function classes and error distributions. All code for this paper is publicly available at \url{https://github.com/tansey/quantile-regression}.
		

		\vskip 5mm
		\textbf{Keywords}: 	
		Deep networks, robust regression, minimax,  sparse networks.
	\end{abstract}
	
	
	\section{Introduction}
	\label{sec:intro}
	
	
	The standard task in regression is to predict the mean response of some variable $Y$, conditioned on a set of known covariates $X$. Typically, this is done by minimizing the mean squared error,
	\begin{equation}
		\label{eqn:mse}
		\hat{f}^{\textrm{(mse)}} \,=\, \underset{ f \in \mathcal{F}}{\arg \min}\, \frac{1}{n}\sum_{i=1}^{n } (y_i - f(x_i))^2, \, 
	\end{equation}
	where $\mathcal{F}$ is a function class.
	
	In many scenarios, this may not be the desired estimand. For instance, if the data contain outliers or the noise distribution of $Y$ is heavy-tailed, \cref{eqn:mse} will be an unstable. The median is then often a more prudent quantity to estimate, even under a squared error risk metric. Alternatively, fields such as quantitative finance and precision medicine are often concerned with extremal risk as well as expected risk. In these domains, one may wish to estimate tail events such as the \pct[5] or \pct[95] percentile outcome. Estimating the \pct[5], \pct[50] (median), \pct[95], or any other response percentile conditional on covariates is the task of quantile regression.
	
	
	
	The goal of quantile regression is to estimate a \textit{quantile function}. Formally, given independent measurements  $\{(x_i,y_i)\}_{i=1}^n \subset \mathbb{ R}^d  \times  \mathbb{ R}$,  from the  random vector  $(X,Y) \in \mathbb{ R}^d \times  \mathbb{ R}$, the goal is to estimate $f_{\tau}^* \,:\,  \mathbb{ R}^d   \rightarrow \mathbb{ R}$ given as 
	\[
	f_{\tau}^*(x)  \, =\,  F_{Y|X=x}^{-1}(\tau),\,\,\,\,\,  x\in \mathbb{ R}^d,
	\]
	where  $\tau \in (0,1)$ is a quantile level,  $ F_{Y|X=x}$  is   the distribution of  $Y$ conditioned on $X=x$, and $f_{\tau}^*(\cdot)$ is the quantile function for the \pct[\tau] quantile.   For example, when  $\tau =0.5 $ the function  $	f_{\tau}^*(x)  $  becomes the conditional median  of  $Y$ given  $X=x$. More generally, the quantile level $\tau$ corresponds to the $(\tau \times 100)^{\text{th}}$ percentile response.
	
	As an estimator   for  $f_{\tau}^*$,  we consider     $\hat{f}$  of the form
	\begin{equation}
		\label{eqn:estimator1}\hat{f} \,=\, \underset{  f \in  \mathcal{F} }{\arg \min}\,   \sum_{i=1}^{n } \rho_{\tau}(y_i - f(x_i)),
	\end{equation}
	where $\mathcal{F}$ is a class of neural network  models, and  $\rho_{\tau}(x) = \max\{\tau x,(\tau-1)x\}$ is the quantile loss  function as in \cite{koenker1978regression}.
	Neural network models optimizing \cref{eqn:estimator1} have been proposed in previous contexts (see Section \ref{sec:previous_work}). These models have shown strong empirical performance, but a theoretical understanding of neural quantile regression remains absent. In this paper, we lay the groundwork for the theoretical foundations of quantile regression with neural networks. Below we briefly summarize our contributions.

	\subsection{Summary of results}
	
	We establish statistical guarantees for quantile regression with multilayer neural networks built with the rectified linear unit  (ReLU) activation function ($\phi(x) =\max\{x,0\}$). Specifically, we make the following contributions:
	
	
	
	\begin{itemize}
		\item For the class of  ReLU neural networks $\mathcal{F}$ with $W$ parameters,    $U $  nodes,   and  $L$ layers, we provide an upper bound on the expected value of the mean squared error for estimating the quantile function $f_{\tau}^*(\cdot)$ at the design points  $x_1,\ldots,x_n$. The upper bound requires  no assumptions  about  the  function $f_{\tau}^*(\cdot)$ though it depends on the best performance possible, under the quantile loss, for functions in the class $\mathcal{F}$.

		\item Suppose that $f_{\tau}^*(\cdot)$ can be written as the composition of functions whose coordinates are  H\"{o}lder  functions  (see  \cite{schmidt2017nonparametric}). We show that there exists a sparse ReLU neural network class $\mathcal{F}$  such that the corresponding quantile regression estimator  attains minimax rates, under squared error loss, for estimating $f_{\tau}^*(\cdot)$.
		This result  holds under minimal assumptions on the distribution of $(X,Y)$.  As a result,  quantile regression with ReLU networks can be directly applied to models with  heavy-tailed error distributions.		
		
		\item Suppose that $f_{\tau}^*(\cdot)$ belongs to a Besov space  $B^{s}_{p,q}([0,1]^d)$,  where  $0<p,q\leq \infty$, and  $s>  d/p$.  We show that under mild conditions, there exists a ReLU neural network structure $\mathcal{F}$  such that  quantile  regression constrained to $\mathcal{F}$   attains  the  rate $n^{- \frac{2s}{2s+d} }$  under the squared error loss. The  resulting rate is minimax in balls of the space  $B^{s}_{p,q}([0,1]^d)$. Thus,  our work advances the nonparametric regression  results from \cite{suzuki2018adaptivity}  to the quantile regression setting where the distribution of the errors can be arbitrary distributions.
		
		

	\end{itemize}
	
	\subsection{Previous work}
	\label{sec:previous_work}
	
	This paper lies at the intersection of nonparametric function estimation theory and quantile regression. The theory we develop draws on two well-established, though mostly-independent, lines of research in estimation theory: (i) universality and convergence rates for neural networks, and (ii) minimax rates for estimating functions in a Besov space and a space based on compositions of H\"{o}lder functions. We merge and extend results from these fields to analyze neural quantile regression. This provides a theoretical foundation for a number of proposed neural quantile methods with strong empirical performance but no prior theoretical motivation. We briefly outline relevant work in each of these areas and situate this paper within these different lines of work.
	
	Neural networks have been shown to have attractive theoretical properties in many scenarios. \cite{hornik1989multilayer} showed that regardless of the activation function, single-layer feedforward networks  can approximate any measurable function; more thorough  descriptions of approximation theory for neural networks are given in \cite{	white1989learning,barron1993universal,barron1994approximation,hornik1994degree,anthony2009neural}.  In the statistical theory literature, \cite{mccaffrey1994convergence} proved  convergence rates for single-layer feedforward networks. \cite{kohler2005adaptive} proved convergence rates for estimating a regression function with a shallow network using sigmoid activation functions; \cite{hamers2006nonasymptotic}  developed  risk bounds in a similar framework. \cite{ klusowski2016risk} developed  risk bounds for high-dimensional ridge function combinations that include neural networks.  	\cite{klusowski2016uniform}  studied uniform approximations by neural network models. 

	More recently, an emerging line of research explores approximation theory for ReLU  neural networks.
	These works are motivated by the empirical successes of ReLU networks, which often outperform neural networks with other activation functions \citep[e.g.][]{nair2010rectified,glorot2011deep} and have achieved state-of-the-art performance in a number of domains \citep[e.g.][]{krizhevsky2012imagenet,devlin:naacl19}.
	\cite{yarotsky2017error}  provided approximation results for Sobolev spaces, which were exploited by  \cite{farrell2018deep} for semiparametric  inference in causality related problems. 
	\cite{liang2016deep} and \cite{petersen2018optimal} provided approximation results for    piecewise smooth functions.  Additional approximation results were also established in \cite{schmidt2017nonparametric}  for classes of functions constructed from H\"{o}lder functions. For such classes, the corresponding approximation results  were exploited by \cite{schmidt2017nonparametric} to obtain minimax rates for nonparametric regression with ReLU networks.  More recently,  
	\cite{nakada2020adaptive} proved minimax rates for nonparametric estimation with ReLU networks in settings with intrinsict low dimension of the data. \cite{bauer2019deep}  studied theoretical properties of nonparametric regression  with neural networks with sigmoid activation function.
	
	
	Separate from neural network theory, related work has investigated regression when the true function is in a Besov space. Such classes of functions are widely used in statistical modeling due to  their  ability to capture spatial inhomogeneity in smoothness. In addition, Besov spaces include more traditional smoothness spaces such as Hölder and Sobolev spaces.   Some statistical works involving Besov spaces include \cite{donoho1998minimax} and  \cite{suzuki2018adaptivity} which provided  minimax results on Besov spaces in the context of regression based on wavelet and neural network estimators respectively.  \cite{brown2008robust}  studied the one dimensional median regression setting when the median function belongs to a Besov space. \cite{uppal2019nonparametric} considered the context of density estimation and  convergence of generative adversarial networks.   A more mathematically generic treatment of Besov spaces can be found in  \cite{devore1988interpolation}  and \cite{lindenstrauss2013classical}.
	
	
	In a line of empirical work, quantile regression with neural networks has been shown to be a powerful nonparametric tool for modeling complex data sets. Successful applications of neural quantile regression include precipitation downscaling and wind power \citep{cannon2011quantile,hatalis2017smooth}, credit portfolio analysis \citep{feng2010robust}, value at risk \citep{xu2016quantile}, financial returns \citep{taylor2000quantile,zhang2019extending},  electrical industry forecasts \citep{zhang2018improved}, and transportation problems \citep{rodrigues2020beyond}.

	On the theoretical side of quantile regression with neural networks,  \cite{white1992nonparametric}  proved convergence in probability results for shallow networks. \cite{chen1999improved}  developed  theory for estimation with a general loss and with single hidden layer  neural network architectures based on a smooth activation function. For target functions in the  Barron class \citep{barron1993universal,barron1994approximation,hornik1994degree},    \cite{chen1999improved}   proved convergence rates better than $n^{-1/4} $  rate in root-mean-square error metric for time series nonparametric quantile regression. Similarly,  Example 3.2.2 in  \cite{chen2007large}  also  established a faster than $n^{-1/4}$ rate in root-mean-square error metric for nonparametric quantile regression in the Sobolev space $W_1^1([0,1]^d)$ ($\ell_1$-integrable functions with domain $[0,1]^d$  and $\ell_1$-integrable first order partial derivatives).  In a related work, \cite{chen2020efficient} considered quantile treatment effect estimation.  Despites all these notable efforts, the results for quantile regression with neural networks are not known to be minimax optimal. We fill this gap by  considering  quantile regression with deep ReLU neural network architectures,  showing minimax rates for general classes of functions.


	


	

	
	
	
	
	


	\section{Neural quantile regression with ReLU networks}
	
	\subsection{Univariate response quantile regression}
	
	For a  vector  $v \in \mathbb{R}^r$
	we define  the function  $\phi_v \,:\,\mathbb{R}^r \rightarrow \mathbb{R}^r$ as 
	\[
	\phi_v\left(   \begin{array}{l}
		a_1\\
		\vdots\\
		a_r
	\end{array}    \right)   = \left(   \begin{array}{l}
		\phi(a_1 -v_1)\\
		\,\,\,\,\,\,\,\,\,\,\,\,\,\, \vdots\\
		\phi(a_r -v_r)
	\end{array}    \right) , 
	\]
	where  $\phi \,:\,\mathbb{ R} \rightarrow  \mathbb{ R}$   given as  $\phi(x) =\max\{x,0\}$  is the ReLU activation function. By convention, when $v =0$ we write $\phi$ to denote  $\phi_v$.
	With this notation, we consider   neural network functions  $f  \,:\, \mathbb{R}^{p_0}  \rightarrow   \mathbb{R}^{p_{L+1}} $ of  the form
	\begin{equation}
		\label{eqn:form1}
		f(x) =   A^{(L)}   \phi_{V_L}  \circ   A^{(L-1)}    \phi_{   V_{L-1}}  \circ   \cdots   \circ  A^{(1)}   \phi_{V_1} \circ A^{(0)}    x, 
	\end{equation}
	where $\circ $ denotes the composition of functions, and  $A^{(i)}  \in  \mathbb{R}^{   p_{i+1}  \times p_{i}   }$, $V_i  \in \mathbb{R}^{p_i}$,  $p_0,\ldots,p_{L+1} \in \mathbb{ N}$ for  $i\in\{0,1,\ldots,L+1\}$.  Here  the matrices $\{A^{(i)}\}$ are the weights in the network, $L$ is the number of layers, and  $(p_0,\ldots,p_{L+1})^{\top}\in \mathbb{ R}^{L+2}$ the width vector.  In this section we assume that $p_{L+1}=1$.
	
	

	Since we focus on quantile regression restricted to neural networks  with ReLU activation functions, we briefly review how joint estimation of quantiles can be achieved. Specifically,  if multiple  quantile levels  are given in a set $\Lambda  \subset (0,1)$, then it is natural to estimate  the  quantile functions $\{ f_{\tau}^*(\cdot)\}_{\tau \in   \Lambda}$  by solving the  problem 
	\begin{equation}
		\label{eqn:multiple_qauntiles}
		\{	\hat{f}_{\tau}\}_{\tau \in \Lambda}    =   \begin{array}{ll}
			\underset{  \{f_{\tau}\}_{\tau \in \Lambda}  \subset  \mathcal{F}  }{\arg \min}& \displaystyle  \,\, \sum_{\tau \in \Lambda}  \sum_{i=1}^{n}\rho_{\tau}(y_i- f_{\tau}(x_i))\\
			\text{subject to} &  f_{\tau}(x_i)\leq  f_{\tau^{\prime}}(x_i) \,\,\,\,\forall  \tau <\tau^{\prime}, \,\,\,\tau, \tau^{\prime} \in \Lambda,\,\,\,i=1,\ldots,n.
		\end{array}
	\end{equation}
	The  constraints in (\ref{eqn:multiple_qauntiles}) are noncrossing  restrictions that are meant to ensure the monotonicity of quantiles. However, due to the nature of stochastic subgradient descent, the monotonicity constraints in (\ref{eqn:multiple_qauntiles}) can make finding a solution to this problem  challenging. To address this, letting $\tau_0 < \ldots < \tau_m$ be the elements of $\Lambda$, we solve 
	\begin{equation}
		\label{eqn:multiple_qauntiles2}
		\{	\hat{h}_{\tau}\}_{\tau \in \Lambda}    =   \begin{array}{ll}
			\underset{ \{h_{\tau}\}_{\tau \in \Lambda}  \subset  \mathcal{F}  }{\arg \min}& \displaystyle  \,\,      \sum_{i=1}^{n}\rho_{\tau}(y_i- h_{\tau_0}(x_i))     +    \sum_{j=1}^m  \sum_{i=1}^{n}\rho_{\tau}\left\{y_i-  h_{\tau_0}(x_i)-   \sum_{l=1}^j  \log\left( 1+ e^{  h_{  \tau_l }(x_i)    } \right)        \right\}\\
		\end{array}
	\end{equation}
	and set
	\[
	\hat{f}_{\tau_0}(x) =\hat{h}_{\tau_0}(x), \,\,\,\,\text{and} \,\,\,\,    \hat{f}_{\tau_j}(x)      =     \hat{h}_{\tau_0}(x)+  \sum_{l=1}^j  \log\left( 1+ e^{  \hat{h}_{  \tau_l }(x)    } \right)     \,\,\,\,\text{for}\,\,\,\,\,j=1,\ldots,m.
	\]
	By construction, (\ref{eqn:multiple_qauntiles2})  implies that the quantile functions  $\{	\hat{f}_{\tau}\}_{\tau \in \Lambda}   $  satisfy the monotonicity constraint in (\ref{eqn:multiple_qauntiles}).  We find  this approach to be numerically stable as compared to other choices such as replacing the terms  $\log(1+e^{  \hat{h}_{  \tau_l }(x_i)    }  )$  with $e^{  \hat{h}_{  \tau_l }(x_i)    } $.
	A different alternative is to estimate the quantile functions separately and then to order their output  as in \cite{chernozhukov2010quantile} and  \cite{zhang2019extending}. In this paper, we will focus on solving (\ref{eqn:multiple_qauntiles2}) which we find to be better in practice.

	\subsection{Extension to multivariate response}
	\label{sub_sec:multivariate}
	The framework that we have considered so far restricts the outcome variable  to be univariate. However, in many machine learning problems where  neural networks are used the outcome is  multivariate. In this section  we discuss two simple extensions of the quantile loss to the multivariate response setting. Our experiments section will contain empirical evaluations of the proposals here.

	\subsubsection{Geometric quantiles}
	
	We start by considering  geometric quantiles. These were introduced by \cite{chaudhuri1996geometric}   to generalize quantiles to multivariate settings. Specifically,   suppose  that 	we are given  data  $\{(x_i,y_i)\}_{i=1}^n   \subset \mathbb{R}^d \times \mathbb{R}^p$, with $p>1$. Furthermore, consider the Euclidean unit ball $\mathbb{ R}^p$, namely  $B^{(p)} = \{  u   \in  \mathbb{ R}^p\,:\,  \|u\|\leq 1 \}$.  \cite{chaudhuri1996geometric}  defines the function   $ \Psi(  \cdot,\cdot ) \,:\,  \mathbb{ R}^p\times \mathbb{ R}^p  \,\rightarrow  \mathbb{ R}$, 
	\[
	\Psi(u,x) \,=\,   \|x\|  +   x^{\top} u,
	\]
	and proposes to minimize  the empirical risk associated with this loss. Motivated by the geometric quantile framework, we define the geometric quantile based on $u \in B^{(p)}$ and  a  ReLU nerwork  class  $\mathcal{F}      \subset  \,\{  f\,:\,   \,\,\,\,\,f\,:\,\mathbb{ R}^d \rightarrow \mathbb{ R}^p    \}$, as 
	\[
	\hat{f}_u   \,=\,      \underset{f  \in  \mathcal{F}  }{\arg \min}\,\,   \sum_{i=1}^{n}\Psi(u,y_i-   f(x_i)   ) .
	\]
	Notice that when $u=0$,  $\hat{f}_u$
	becomes
	\begin{equation}
		\label{eqn:robust}
		\hat{f}_u   \,=\,      \underset{f  \in  \mathcal{F}  }{\arg \min}\,\,   \sum_{i=1}^{n}    \| y_i-   f(x_i)  \|.
	\end{equation}
	The latter  can be thought as an estimator  of the mean of $y_i$ conditioning on $x_i$. In fact, (\ref{eqn:robust})   is commonly known as the $L_1$-median, see \cite{vardi2000multivariate}. The $L_1$-median can  be interpreted as a robust version of the usual   least squares,
	\begin{equation}
		\label{eqn:ls}
		\underset{f  \in  \mathcal{F}  }{\arg \min}\,\,   \sum_{i=1}^{n}    \| y_i-   f(x_i)  \|^2.
	\end{equation}
	This is due to the fact that  replacing $\|\cdot\|^2$ with $\|\cdot\|$, as in (\ref{eqn:robust}), has the advantage that large residuals are not heavily penalized as in  (\ref{eqn:ls}).

	\subsubsection{Marginal quantiles}
	\label{sec:marginal_quantiles}

	Marginals quantile have perhaps the advantage over geometric quantiles in that they can produce  actual prediction intervals, and have probabilistic meaning. However, as their name suggests,  marginal quantiles  only produce  predication intervals  for each variable in the output marginally, and thus do not produce a prediction region for the output jointly.

	Let  $\tau \in (0,1)$,    and $ f_{\tau}^* \,:\,  \mathbb{ R}^d  \,\rightarrow \,\mathbb{ R}^p$,  $ f_{\tau}^*(x) = ( f_{\tau,1}^*(x) ,\ldots,f_{\tau,p}^*(x)   )^{\top} $, where
	\begin{equation}
		\label{eqn:marginal}
		f_{\tau,j }^*(x) ,\,=\, F_{Y_j|X=x}^{-1}(\tau),
	\end{equation}
	where  $Y =(Y_1,\ldots,Y_p)^{\top} \in \mathbb{ R}^p$. The functions   $f_{\tau,1}^*(x) ,\ldots,f_{\tau,p}^*(x)$ are the marginal quantiles of $Y_1,\ldots,Y_p$ respectively, conditioining  on $X$.
	Marginal quantiles have been studied in the literature \citep[c.f.][]{babu1989joint,abdous1992note}.  Given $\{  (x_i,y_i)\}_{i=1}^n \subset \mathbb{R}^d \times \mathbb{ R}^p$   independent copies of $(X,Y)$,
	the multivariate  function $f_{\tau}^*$  can be estimated with a multivariate output ReLU neural network architecture  $\mathcal{F}$  as
	\begin{equation}
		\label{eqn:qunatiles2}
		(\hat{f}_{\tau,1},\cdots,\hat{f}_{\tau,p})^{\top} \,=\,      \underset{f  =  (f_1,\ldots,f_p)^{\top}    \in  \mathcal{F}  }{\arg \min}\,\,     \sum_{j=1}^{p}   \sum_{i=1}^{n}   \rho_{\tau}(y_{i,j}-   f_j(x_{i})).
	\end{equation}
	To be specific, here  the class $\mathcal{F}$ consists of functions of the form (\ref{eqn:form1})  with $p_0 = d$ and  $p_{L+1} = p$.

	\section{Theory}
	
	We  now proceed to provide statistical guarantees for  quantile regression  with ReLU networks. Our theory is organized in three parts. First, we provide a general upper bound  on the mean squared error for estimating the quantile function. Second, we study a setting where the quantile function is a member of  a space of compositions of functions whose  coordinates are H\"{o}lder functions. Finally, we assume  that the quantile  function belongs to a  Besov space.

	\subsection{Notation}
	Throughout this section, for functions  $f,g\,:\,\mathbb{ R}^d  \rightarrow \mathbb{ R}$,     we define the function  $\Delta_n^2(f,g)$ as
	\begin{equation}
		\label{eqn:loss}
		\Delta_n^2(f,g)   \,:=\,  \frac{1}{n} \sum_{i=1}^{n}   D^2(  f(x_i) - g(x_i)  ),
	\end{equation}
	with $\{x_i\}_{i=1}^n$ the features and where
	\begin{equation}
		\label{eqn:loss2}
		D^2(t) := \min\left\{  \vert t \vert,  t^2 \right\}.
	\end{equation}
	This function was used as performance metric in a different quantile regression context in  \cite{padilla2020adaptive}.

	Furthermore,  		for bounded  functions  $f$ and  $g$  with  $f,g  \,:\,[0,1]^d\rightarrow \mathbb{ R}$,  we define $	\Delta^2(f,g) $ as
	\[
	\Delta^2(f,g) :=  \mathbb{E}\left(   D^2(f(X)-g(X))   \right),
	\]
	set  $\Delta(f,g) :=  \sqrt{\Delta^2(f,g)}$.  We also write 
	\[
	\|f-g\|_{\ell_2} :=  \sqrt{\mathbb{E} \left(  \left(   f(X) - g(X) \right)^2   \right)  },
	\]
	and
	\begin{equation}
		\label{eqn:mse_def}
		\displaystyle 	\|  f-g  \|_n^2  \,:=\,  \frac{1}{n}\sum_{ i=1}^n   (  f(x_i) -g(x_i)  )^2.
	\end{equation}
	
	For a matrix  $A \in \mathbb{ R}^{s \times t}$  we  define  
	\[
	\|A\|_0 \,=\, \vert \{  (i,j)\,:\,     A_{i,j} \neq 0, \,i\in\{1,\ldots,s\}, \, j\in\{1,\ldots,t\} \}\vert,\,\,\,\,\,\,  \|A\|_{\infty} \,=\,  \underset{i=1,\ldots,s,\,j=1,\ldots,t}{\max}\,\,\vert A_{i,j}\vert.
	\]
	We also  write $\floor{x}$  for the largest integer strictly smaller than $x$. The notation $\mathbb{N}_{+}$ and  $\mathbb{R}_{+}$ indicate the set of positive natural and real numbers respectively. For sequences  $a_n$  and  $b_n$ we write  $a_n= O(b_n)$ and  $a_n \lesssim b_n$ if there exist  $C>0$ and  $N>0$ such that $n \geq N$ implies  $a_n \,\leq  \,C b_n$. If $a_n  = O(b_n)$ and  $b_n = O(a_n)$ then we  write  $a_n \asymp b_n$.
	
	Finally,  we refer to the quantities $\varepsilon_i = y_i -   f_{\tau}^*(x_i)$ for  $i=1,\ldots,n$  as the errors.

	\subsection{General  upper bound}

	In this subsection we  focus on quantile regression ReLU estimators of the form  
	\begin{equation}
		\label{eqn:deep}
		\displaystyle   	\hat{f} \,= \, \underset{ f  \in  \mathcal{F}(W,U,L) ,    \,\,\,\|f\|_{\infty} \leq F }{\arg \min} \, \sum_{i=1}^{n}  \rho_{\tau}(y_i -  f(x_i) ),
	\end{equation}
	where   $\mathcal{F}(W,U,L)$  is the class of networks of the form (\ref{eqn:form1}) such that  the number of parameters in the network  is  $W$,  the number of nodes  is $U$, and the number of layers is $L$. Here, $F$ is  a fixed positive constant.

	Before  arriving at our first  result, we start by stating some assumptions regarding the generative model. Throughout, we  consider  $\tau \in (0,1) $  as fixed.

	\begin{assumption}
		\label{as1}  
		We write $  f_{\tau}^*(x_i) = F_{y_i |x_i}^{-1}(\tau) $ for $i=1,\ldots,n$.  Here  $F_{y_i |x_i} $  is cumulative distribution function of $y_i$ conditioning on $x_i$  for  $i=1,\ldots,n$. Also,   $y_1,\ldots,y_n \in \mathbb{ R}$ are assumed to be  independent.
	\end{assumption}

	Notice that Assumption \ref{as1}  simply requires that the different outcome  measurements  are independent conditioning on the design,  which for this subsection is assumed to be fixed.
	
	\begin{assumption} \label{as2} There exists  a constant  $L>0$  such that   for   $\delta \in \mathbb{ R}^n$  satisfying  $\|\delta\|_{\infty} \leq L$  we have that \[ \underset{i = 1,\ldots,n}{\min}\,\,p_{y_i|x_i}(  f_{\tau}^*(x_i)  +\delta_i ) \geq \underline{p}, \] for some  $\underline{p}>0$, and where   $p_{y_i|x_i}$  is the probability density function  of $y_i$ conditioning on $x_i$.  We also require that 
		\[
		\underset{t  \in \mathbb{ R}}{\sup}\,\,  p_{y_i|x_i}(t)  \,\leq \,  c,\,\,\,\text{a.s.},\,\,\,
		\]
		for some constant  $c>0$.
	\end{assumption}
	
	Assumption \ref{as2}  requires that  there exists a neighborhood around $f_{\tau}^*(x_i)$ in which   the probability density function of $y_i$ conditioning on $x_i$ is bounded by below. Related conditions appeared as D.1 \cite{belloni20111}, 
	Condition  2 in \cite{he1994convergence}, and Assumption A  in  \cite{padilla2020adaptive}.

	Next we define   $f_n$, the projection of the quantile function $f_{\tau}^*$ onto the network  class $\mathcal{F}(W,U,L)$ in the sense of the quantile risk.

	\begin{definition}
		\label{def1}
		We define the function $f_n$ as
		\[
		f_n  \,\in \,  \underset{f \in   \mathcal{F}(W,U,L),   \|f\|_{\infty} \leq  F }{\arg \min}\,\,\,    \mathbb{E}\left[ \sum_{i=1}^{n}   \rho_{\tau}(z_i -  f(x_i)    )     -  \sum_{i=1}^{n}  \rho_{\tau}(z_i -  f_{\tau}^*(x_i)    )  \right],
		\]
		where  $z \in \mathbb{ R}^n$ is an independent  copy of  $y$. We also define the approximation error as
		\[
		\text{err}_1 =  \mathbb{E}\left[ \frac{1}{n}\sum_{i=1}^{n}   \rho_{\tau}(z_i -  f_n(x_i)    )     -  \frac{1}{n} \sum_{i=1}^{n}  \rho_{\tau}(z_i -  f_{\tau}^*(x_i)  )  \right].
		\]
	\end{definition}

	Notice that when $f_{\tau}^* \in \mathcal{F}(W,U,L)$ and  $\|f\|_{\infty}^*\leq F$  then the approximation error  is zero. However, in general $f_{\tau}^* \notin\mathcal{F}(W,U,L)$ and  $\text{err}_1   \geq 0$.
	
	We are now ready to present our first theorem which exploits the VC dimension results from \cite{bartlett2019nearly}.

	\begin{theorem}
		\label{thm:risk}
		Suppose  that Assumptions \ref{as1}--\ref{as2} hold and   $n\geq C LW \log (U)$ for a large enough $C>0$. Then  $\hat{f}$ defined in  (\ref{eqn:deep}) satisfies
		\[
		\mathbb{E}\left[    \Delta_n^2(  f_{\tau}^*,\hat{f} ) \,\bigg| \,   x_1,\ldots,x_n\right] \leq      c_1 F \left[  \frac{   \left\{LW \log U\cdot \log n   \right\}   }{n} \right]^{1/2} +   c_1\text{err}_1, 
		\]
		with  $c_1>0$  a constant. Furthermore, it also holds that
		\[
		\mathbb{E}\left[  \|\hat{f}-f_{\tau}^*\|_n^2 \,\bigg| \,   x_1,\ldots,x_n\right] \leq      c_1    \max\{1,F\} F \left[  \frac{   \left\{LW \log U\cdot \log n    \right\}  }{n} \right]^{1/2} +   c_1\max\{1,F\}\text{err}_1.
		\]
	\end{theorem}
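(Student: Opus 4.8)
The argument is the standard recipe for $M$-estimation with a Lipschitz loss: a basic inequality from the empirical optimality of $\hat f$, a self-bounding (curvature) bound specific to the check loss, and an empirical-process bound that plugs in the VC/pseudo-dimension estimate for ReLU networks of \cite{bartlett2019nearly}. All expectations below are conditional on $x_1,\dots,x_n$. Write the loss increment $h_f(y,x):=\rho_\tau(y-f(x))-\rho_\tau(y-f_\tau^*(x))$ and the population excess quantile risk $R(f):=\frac1n\sum_{i=1}^n\E\big[\rho_\tau(z_i-f(x_i))-\rho_\tau(z_i-f_\tau^*(x_i))\big]$, where $z_i$ is an independent copy of $y_i$, so that $R(f_n)=\text{err}_1$ by Definition~\ref{def1}. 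Since $\hat f$ minimizes $\sum_i\rho_\tau(y_i-f(x_i))$ over $\{f\in\mathcal F(W,U,L):\|f\|_\infty\le F\}$ and $f_n$ lies in this set, $\frac1n\sum_i h_{\hat f}(y_i,x_i)\le\frac1n\sum_i h_{f_n}(y_i,x_i)$; adding and subtracting $\frac1n\sum_i h_{\hat f}(y_i,x_i)$ to $R(\hat f)$, bounding the fluctuation by $Z:=\sup_{f\in\mathcal F(W,U,L),\,\|f\|_\infty\le F}\big|R(f)-\frac1n\sum_i h_f(y_i,x_i)\big|$, and taking expectations (the term $\E[\frac1n\sum_i h_{f_n}(y_i,x_i)]$ equals $R(f_n)=\text{err}_1$ because $f_n$ is deterministic given the design and $y_i$ has the same conditional law as $z_i$), I get $\E[R(\hat f)]\le\E[Z]+\text{err}_1$.

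The second step is a self-bounding bound relating $R(f)$ to $\Delta_n^2(f_\tau^*,f)$. Using Knight's identity, or equivalently expanding $g_i(u):=\E[\rho_\tau(z_i-f_\tau^*(x_i)-u)]$, whose derivative $F_{z_i|x_i}(f_\tau^*(x_i)+u)-\tau$ vanishes at $u=0$, Assumption~\ref{as2} gives $g_i(u)-g_i(0)\gtrsim\underline p\,u^2$ on $|u|\le L$ from the density lower bound, and $g_i(u)-g_i(0)\gtrsim\underline p\,L\,|u|$ for $|u|>L$ from monotonicity of the distribution function; since $\|f\|_\infty\le F$ and $f_\tau^*$ is bounded the relevant $|u|=|f(x_i)-f_\tau^*(x_i)|$ is bounded, so altogether $g_i(u)-g_i(0)\ge c_0\,D^2(u)$ with $c_0=c_0(\underline p,L,F)>0$. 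Averaging over $i$ yields $R(f)\ge c_0\,\Delta_n^2(f_\tau^*,f)$, and combined with the basic inequality $c_0\,\E[\Delta_n^2(f_\tau^*,\hat f)]\le\E[Z]+\text{err}_1$, which is the first assertion once $\E[Z]$ is controlled. For the $\|\cdot\|_n^2$ assertion, boundedness of $\hat f$ and $f_\tau^*$ gives $|t|\le 2F$ for $t=\hat f(x_i)-f_\tau^*(x_i)$, hence $t^2\le\max\{1,2F\}\,D^2(t)$ pointwise, so $\|\hat f-f_\tau^*\|_n^2\le\max\{1,2F\}\,\Delta_n^2(f_\tau^*,\hat f)$, and the second bound follows from the first.

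For $\E[Z]$: because $\rho_\tau$ is $\max\{\tau,1-\tau\}\le1$ Lipschitz, $|h_f(y,x)|\le|f(x)-f_\tau^*(x)|\le 2F$ irrespective of the law of $y_i$ (this is why heavy tails cause no harm), and $h_f$ is a $1$-Lipschitz function of $f(x)$ vanishing at $f(x)=f_\tau^*(x)$. By symmetrization and the Ledoux--Talagrand contraction inequality, $\E[Z]\lesssim\E\big[\sup_f\big|\frac1n\sum_i\sigma_i(f(x_i)-f_\tau^*(x_i))\big|\big]$ with $\sigma_i$ i.i.d.\ Rademacher. The pseudo-dimension of $\mathcal F(W,U,L)$ is $O(WL\log U)$ by \cite{bartlett2019nearly}, so this class truncated at level $F$ has empirical $\ell_\infty$ covering numbers $N(\epsilon)$ with $\log N(\epsilon)\lesssim WL\log U\cdot\log(F/\epsilon)$; choosing $\epsilon\asymp 1/n$ and combining a Hoeffding union bound over an $\epsilon$-net with the net error yields $\E[Z]\lesssim F\sqrt{WL\log U\cdot\log n/n}$, the condition $n\ge CLW\log U$ ensuring this is $O(F)$. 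Substituting into the two displays above proves the theorem, $c_1$ absorbing $c_0^{-1}$ and the universal constants.

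The main obstacle is the self-bounding step of the second paragraph: since the check loss has only \emph{linear} curvature far from $f_\tau^*$, one cannot obtain a bound in $\|f-f_\tau^*\|_n^2$ without extra tail assumptions, and the correct surrogate is exactly $\Delta_n^2$ with $D^2(t)=\min\{|t|,t^2\}$, so the quadratic-to-linear crossover must be handled carefully under only the one-sided density lower bound of Assumption~\ref{as2} together with boundedness. The empirical-process step is otherwise routine once the VC bound of \cite{bartlett2019nearly} is available, with the extra $\log n$ coming from the crude net-plus-union-bound estimate rather than a sharp chaining argument (a refinement that would only affect logarithmic factors).
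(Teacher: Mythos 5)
Your proposal is correct and follows essentially the same route as the paper's proof: a basic inequality from empirical optimality relative to the projection $f_n$ (yielding the $\text{err}_1$ term), a curvature bound showing the expected excess check-loss dominates $c\,\Delta_n^2$ under Assumption~\ref{as2} (which the paper simply imports as Lemma~\ref{lem1} from \cite{padilla2020adaptive} and you re-derive via Knight's identity), and a symmetrization-plus-contraction empirical-process bound combined with the pseudo-dimension estimate of \cite{bartlett2019nearly}. The only cosmetic difference is that you control the Rademacher complexity by a single-scale net and union bound where the paper uses Dudley's entropy integral with the covering bound of \cite{farrell2018deep}; both give the same $F\sqrt{LW\log U\cdot\log n/n}$ rate, and your passage from $\Delta_n^2$ to $\|\cdot\|_n^2$ via $t^2\le\max\{1,2F\}D^2(t)$ matches the paper's second display.
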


	Theorem \ref{thm:risk}  provides a general  bound on the mean squared error that depends on the sample size $n$, the parameters of the network, and the approximation error. For instance, if  $L$, $W$ and $U$ are constants in $n$, then the rate becomes  $n^{-1/2  }  +   \text{err}_1 $.  In the next two subsections we will consider  classes of ReLU network with more structure which will lead to rates that match minimax rates in nonparametric regression.

	\subsection{Space of  compositions based on  H\"{o}lder functions  }
	\label{sec:theory2}

	Next we provide  convergence rates for quantile regression with ReLU networks under the assumption that  the quantile function belongs to a class of functions based on H\"{o}lder spaces. Such class  of  functions,  defined below,  was studied in \cite{schmidt2017nonparametric}.   There,  the authors showed that  for such class, neural networks with ReLU  activation function attain minimax rates. However, the results in \cite{schmidt2017nonparametric}  hold under the assumption of Gaussian errors. We now show that it is possible to attain the same  rates under general error assumptions  by employing the quantile loss. Before arriving at such result we start by providing some definitions.

	\begin{definition}
		\label{def8}
		We define the class of ReLU neural networks  $\mathcal{G}(L,p,s,F)$ as
		\[
		\begin{array}{lll}
			\mathcal{G}(L,p,S,F)   &\,=\,&\bigg\{     f\,:\,    f  \,\,\text{is of form } \,\,\,(\ref{eqn:form1}),   \,\,\,\text{and}\,\,\,\,\, \sum_{j=0}^L   \left( \|A^{(l)}\|_0  + \|V_l\|_0    \right) \leq S, \,\, \| f\|_{\infty} \leq F,\, \\
			& &   \underset{j=0,1,\ldots,L}{\max}\,  \|A^{(j)}\|_{\infty}  \leq 1,\,\,\,\,\, \underset{j=1,\ldots,L}{\max}\,  \|V_j\|_{\infty}  \leq 1 \bigg\}.
		\end{array}
		\]
	\end{definition}

	With the notation in Definition \ref{def8}, we consider the estimator 
	\begin{equation}\label{eqn:estimator}
		\hat{f}  \,=\,      \underset{  f \in \mathcal{G}(L,p,S,F)    ,    \,\,   }{\arg  \min}\,\,   \sum_{i=1}^{n}  \rho_{\tau}( y_i -  f(x_i)  ),
	\end{equation}
	and  define a $\|\cdot\|_{\infty}$-projection of  $f_{\tau}^*$, the true  quantile function, onto  $\mathcal{G}(L,p,S,F)$ as
	\[
	f_n \in      \underset{f \in 	\mathcal{G}(L,p,S,F)  ,  \,    }{\arg\min}  \, \|f - f_{\tau}^*\|_{\infty}.
	\]
	A few comments are in order. First, notice that  we assume that all the parameters are bounded by one. As discussed in \cite{schmidt2017nonparametric}, this is standard and in practice  can be achieved by projecting the parameters in $[-1,1]$ after every iteration of stochastic subgradient descent. Second, we assume that the networks are sparse as   was the case in \cite{schmidt2017nonparametric} and \cite{suzuki2018adaptivity}.  See \cite{hassibi1993second,han2015learning,frankle2018lottery} and \cite{gale2019state} for different approaches to produce sparse networks.

	Before stating our main result of this subsection, we provide the definition of the function class that we consider. Such class requires that we introduce some notation that comes from \cite{schmidt2017nonparametric}.

	\begin{definition}
		\label{def9}
		For  $\beta>0 $    and  $r\in \mathbb{N}_{+}$	we define the  class of  H\"{o}lder functions  of exponent $\beta$  as
		\[
		\mathcal{C}_r^{\beta}(I,K)       =  \bigg\{   f\,:\,I \subset \mathbb{R}^r  \rightarrow \mathbb{R}  \,:\,      \underset{   \alpha  \,:\,  \|\alpha\|_1  <\beta  }{\sum}      \|\partial^{\alpha} f\|_{\infty}  +\underset{\alpha      \,:\, \|\alpha\|_1 = \floor{\beta}      }{\sum}       \underset{x\neq y, x,y\in I }{\sup}    \,\frac{\left\vert     \partial^{\alpha} f(x) - \partial^{\alpha} f(y)       \right\vert  }{\left\|  x-y        \right\|_{\infty}^{    \beta-   \floor{\beta}   }    } \leq K \bigg\},
		\]
		where $\partial^{\alpha} =  \partial^{\alpha_1} \cdots   \partial^{\alpha_r}$ with  $(\alpha_1,\ldots,\alpha_r )   \in \mathbb{ N}^r$.
	\end{definition}

	\begin{definition}
		\label{def10}
		For  $q \in \mathbb{ N}_{+ }$, $d = (d_0,\ldots,d_{q+1} )  \in  \mathbb{N}^{q+2}_{+}$, $   t=(t_0,\ldots,t_q) \in \mathbb{ N}^{q+1}_{+}$,   $\beta  =  (\beta_0,\ldots,\beta_q) \in \mathbb{R}^{q+1}_{+}$  and  $K \in \mathbb{R}_{+}$ we define  the class of  functions
		\[
		\begin{array}{lll}
			\mathcal{H}(q,d,t,\beta,K)&\,=\,&\bigg \{      f =     g_q \circ  \cdots  g_0\,\,\,:\,\,\,  g_i =(g_{i,j})_j\,:\,[a_i,b_i]^{d_i    }   \rightarrow       [a_{i+1},b_{i+1}]^{d_{i+1} } \\
			& &  \,\,g_{i,j}  \in \mathcal{C}_{t_i}^{\beta_i}\left([a_i,b_i]^{t_i    } ,K\right) ,\,\,\,\,\,\,\,\,\,\,\,\,\,   \text{and}\,\,\,\,\,\,\,\,\,\,\,\,|a_i|,|b_i|\leq K  \bigg\}.
		\end{array}
		\]
	\end{definition}

	With  Definitions  \ref{def9}--\ref{def10} in hand, we  now state an assumption on the  true  quantile  function regarding its smoothness.

	\begin{assumption}
		\label{as5} The quantile function $f_{\tau}^*$ satisfies   $f_{\tau}^* \in  \mathcal{H}(q,d,t,\beta,K)$ for some  $q \in \mathbb{ N}_{+ }$,  $d = (d_0,\ldots,d_{q+1} )\in  \mathbb{N}^{q+2}_{+},   t=(t_1,\ldots,t_q) \in \mathbb{ N}^{q}_{+}$,   $\beta  =  (\beta_0,\ldots,\beta_{q}) \in \mathbb{R}^{q+1}_{+}$  and  $K \in \mathbb{R}_{+}$.   We also require that $\|f_{\tau}^*\|_{\infty}\leq F$ where  $F$ is the same apearing in (\ref{eqn:estimator}). Moreover, we define  the  smoothness indices
		\[
		\beta^*_i   = \beta_i   \prod_{l=i+1}^{q } \min\{\beta_l,1\},
		\]
		for $i = 1,\ldots, q-1$ and $\beta^*_q  = \beta_q$.
	\end{assumption}
	
	Importantly, as Section 5 of  \cite{schmidt2017nonparametric} showed,  the class $\mathcal{H}(q,d,t,\beta,K)$ is challenging enough so that wavelet estimators are suboptimal for estimating $f_{\tau}^* \in  \mathcal{H}(q,d,t,\beta,K)$. Our main result in this section shows that, in contrast,  quantile regression with ReLU networks attains optimal rates.
	
	As for the  distribution of the data,  our next assumption requires  that  the covariates have  a probability density function  that is bounded  by above and below.

	\begin{assumption}
		\label{as3}
		We assume that  $\{(x_i,y_i)\}_{i=1}^n$  are independent copies of  $(X,Y)$, with $X$  having   a probability  density function  $g_X$  with support in $[0,1]^d$  and such that
		\[
		c_1\leq    \underset{x \in [0,1]^d}{\inf} g_X(x)    \leq     \underset{x \in [0,1]^d}{\sup} g_X(x)  \leq  c_2,
		\]
		for  some constants $c_1,c_2>0$.
	\end{assumption}

	We are now ready to state  our main result  of this subsection  that exploits the approximation results from \cite{schmidt2017nonparametric}.

	\begin{theorem}
		\label{thm5}
		Suppose  that Assumptions  \ref{as1}--\ref{as3} hold. In addition,  suppose that  for the class $	\mathcal{G}(L,p,S,F) $  the parameters are chosen to satisfy 
		\[
		\begin{array}{l}
			\sum_{i=0}^{q} \log_2\left(   4\max\{t_i,\beta_i\}  \right)\log_2(n)\leq  L     \lesssim  n \epsilon_n,\,\,\,\,\,\,\, \max\{1,K\}\leq F,\\
			n\epsilon_n      \lesssim      \underset{i=1,\ldots,L}{\min} p_i,\,\,\,\,\,\, S\asymp n \epsilon_n \log n,\,\,\,\,\,\,    \underset{i=1,\ldots,L}{\max} p_i  \lesssim n,
		\end{array}
		\]
		where
		\begin{equation}
			\label{eqn:rate}	\epsilon_n \,=\,  \underset{i=0,1,\ldots,q}{\max}   n^{   -  \frac{2\beta_i^*}{2\beta_i^*    +t_i  }  }.
		\end{equation}
		Then there exists a constant  $C>0$  such that with probability approaching  one, we have that 
		\[
		\max\left\{		\|\hat{f}- f_{\tau}^*\|_{ \ell_2  }^2 ,		\|\hat{f}- f_{\tau}^*\|_{n }^2  \right\}   \,\leq \, C  \epsilon_n    L   \log^2 n,
		\]
		where  $\hat{f}$  is the estimator defined in (\ref{eqn:estimator}).
		Hence, if in addition $L \asymp \log n$, then
		\[
		\max\left\{		\|\hat{f}- f_{\tau}^*\|_{ \ell_2  }^2 ,		\|\hat{f}- f_{\tau}^*\|_{n }^2  \right\}   \,\leq \, C  \epsilon_n       \log^3 n,
		\]
		with probability approaching  one.

	\end{theorem}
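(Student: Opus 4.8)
The plan is to combine the approximation theory of \cite{schmidt2017nonparametric} for the composition class $\cH(q,d,t,\beta,K)$ (available here through Assumption~\ref{as5}) with a \emph{localized} analysis of the quantile empirical risk minimizer. A direct appeal to Theorem~\ref{thm:risk} would not suffice: once the network size is scaled as $S\asymp n\epsilon_n\log n$, the square root in that bound only delivers a rate of order $\sqrt{\epsilon_n}$, so the curvature of the quantile risk must be exploited to upgrade this to $\epsilon_n$ up to logarithmic factors. Write $R(f):=\mathbb{E}\big[\tfrac1n\sum_{i=1}^n\rho_\tau(z_i-f(x_i))-\rho_\tau(z_i-f_\tau^*(x_i))\,\big|\,x_1,\dots,x_n\big]$ for the conditional excess quantile risk and $\widehat R(f)$ for its empirical version based on $y_1,\dots,y_n$. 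Since $\hat f$ minimizes $\widehat R$ over $\cG(L,p,S,F)$ and $f_n$ (the $\|\cdot\|_\infty$-projection of $f_\tau^*$) lies in that class, $\widehat R(\hat f)\le\widehat R(f_n)$ yields the basic inequality
\[
R(\hat f)\ \le\ \big[(R-\widehat R)(\hat f)-(R-\widehat R)(f_n)\big]\ +\ R(f_n).
\]

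First I would establish a \textbf{curvature lower bound}. For any $f$ with $\|f\|_\infty\le F$, a one-dimensional computation gives $R(f)=\tfrac1n\sum_i\int_0^{f(x_i)-f_\tau^*(x_i)}\big(F_{y_i|x_i}(f_\tau^*(x_i)+s)-\tau\big)\,ds$; using Assumption~\ref{as2} (density at least $\underline p$ on a neighborhood of radius equal to the constant in that assumption, and at most $c$ everywhere) I would prove the pointwise inequality $\int_0^t\big(F_{y|x}(f_\tau^*+s)-\tau\big)\,ds\ \ge\ \kappa\, D^2(t)$ for all $|t|\le 2F$, with $\kappa>0$ depending only on $\underline p,c,F$ and that radius, hence $R(f)\ge\kappa\,\Delta_n^2(f,f_\tau^*)$. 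The matching one-sided bound $R(f)\le\tfrac c2\|f-f_\tau^*\|_n^2$ follows from the global density bound and supplies both $R(f_n)\le\tfrac c2\|f_n-f_\tau^*\|_\infty^2$ and the Bernstein-type variance control $\mathrm{Var}\big(\rho_\tau(y-f(x))-\rho_\tau(y-f_\tau^*(x))\big)\lesssim\|f-f_\tau^*\|_n^2\lesssim R(f)$ used below. The appearance of $D^2(t)=\min\{|t|,t^2\}$ is forced here: the quantile risk is only quadratic in a bounded neighborhood of $f_\tau^*$ and grows merely linearly outside it.

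The \textbf{main obstacle} is the uniform control of the empirical process $(R-\widehat R)(\hat f)-(R-\widehat R)(f_n)$ over $\cG(L,p,S,F)$. I would run a peeling argument over the shells $\{f\in\cG:2^{j}\delta^2\le\Delta_n^2(f,f_\tau^*)\le 2^{j+1}\delta^2\}$: the increments $\rho_\tau(y-f(x))-\rho_\tau(y-f_\tau^*(x))$ are bounded by $2F$ and $1$-Lipschitz in $f$, and their variance is $\lesssim\Delta_n^2(f,f_\tau^*)$ by the previous step, so Bernstein's inequality combined with the metric-entropy estimate $\log N(\eta,\cG(L,p,S,F),\|\cdot\|_\infty)\lesssim S L\log(\eta^{-1})+S L\log(\max_i p_i)$ — which follows from the VC-dimension and covering-number bounds for sparse ReLU networks in \cite{bartlett2019nearly} (see also \cite{schmidt2017nonparametric}) — reduces matters to solving a critical-radius equation. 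Substituting $S\asymp n\epsilon_n\log n$, $\max_i p_i\lesssim n$, and $\log(\eta^{-1})\asymp\log n$ at the relevant scale gives the critical radius $\delta_n^2\asymp\tfrac{S L\log n}{n}\asymp\epsilon_n L\log^2 n$, so that on an event of probability approaching one the basic inequality forces $R(\hat f)\lesssim R(f_n)+\delta_n^2$. The care needed is to localize in the surrogate metric $\Delta_n$, consistently with the curvature bound, and to balance the Bernstein term against the entropy term so that the rate is $(\mathrm{complexity})/n$ rather than its square root.

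Finally I would assemble the pieces. Combining the curvature bound with the last display, $\kappa\,\Delta_n^2(\hat f,f_\tau^*)\le R(\hat f)\lesssim\|f_n-f_\tau^*\|_\infty^2+\epsilon_n L\log^2 n$ on the good event. The approximation theorem of \cite{schmidt2017nonparametric}, applied with exactly the stated parameter choices ($\sum_{i}\log_2(4\max\{t_i,\beta_i\})\log_2 n\le L$, $n\epsilon_n\lesssim\min_i p_i$, $S\asymp n\epsilon_n\log n$, $\max\{1,K\}\le F$), produces $f_n\in\cG(L,p,S,F)$ with $\|f_n-f_\tau^*\|_\infty^2\lesssim\epsilon_n\log^2 n$; since $L\gtrsim\log n$ under the stated conditions, $\Delta_n^2(\hat f,f_\tau^*)\lesssim\epsilon_n L\log^2 n$. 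Because $\|\hat f\|_\infty\le F$ and $\|f_\tau^*\|_\infty\le F$, the argument of $D^2$ never exceeds $2F$, where $D^2(t)\ge t^2/\max\{1,2F\}$, so $\|\hat f-f_\tau^*\|_n^2\lesssim\Delta_n^2(\hat f,f_\tau^*)\lesssim\epsilon_n L\log^2 n$ (the hidden constant absorbing $F$ and $\kappa$). To replace $\|\cdot\|_n$ by $\|\cdot\|_{\ell_2}$ I would invoke Assumption~\ref{as3} (density of $X$ bounded above and below on $[0,1]^d$) together with one more uniform-deviation argument of the same peeling type, showing that $\|f-f_\tau^*\|_{\ell_2}^2$ and $\|f-f_\tau^*\|_n^2$ are comparable over the localized class up to the same order. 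Specializing to $L\asymp\log n$ then turns the $\epsilon_n L\log^2 n$ bound into $\epsilon_n\log^3 n$, as claimed.
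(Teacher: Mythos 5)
Your proposal is correct in outline and would deliver the stated rate; it uses the paper's three essential ingredients --- the curvature of the quantile risk obtained from Assumption \ref{as2} via Knight's identity (this is exactly the paper's Lemma \ref{lem1}/Lemma \ref{lem7}, and it is what produces $D^2$), the sup-norm covering-number bound for $\mathcal{G}(L,p,S,F)$ from Lemma 5 of \cite{schmidt2017nonparametric}, and the approximation bound $\|f_n-f_\tau^*\|_\infty\lesssim\max_i n^{-\beta_i^*/(2\beta_i^*+t_i)}$ from equation (26) there --- but it organizes the localization differently from the paper. The paper centers everything at the projection $f_n$, works first in the population metric ($\Delta^2$ and $\|\cdot\|_{\ell_2}$, Lemmas \ref{lem7}--\ref{lem8}), transfers between population and empirical balls via Theorem 2.1 of \cite{bartlett2005local} (Lemma \ref{lem9}), bounds a critical radius $r^*$ (Lemma \ref{lem11}), and then runs the Farrell-type recursion over dyadic $\ell_2$-shells around $f_n$ to localize $\hat f$; the price of centering at $f_n$ is the cross-term $\|f_n-f_\tau^*\|_\infty\,\Delta(\hat f,f_n)\sqrt{F}$ carried through Lemmas \ref{lem7}--\ref{lem10}. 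You instead center at $f_\tau^*$, condition on the design and work in the empirical metric $\Delta_n$, and run a single peeling-plus-Bernstein argument, transferring to $\|\cdot\|_{\ell_2}$ only at the end; this avoids the cross-term but requires the final uniform comparison of $\|\cdot\|_n^2$ and $\|\cdot\|_{\ell_2}^2$ over the localized class, which is precisely the content (in the reverse direction) of the paper's Lemma \ref{lem9} and rests on the same entropy control rather than on Assumption \ref{as3} --- the density bounds on $X$ are not the operative ingredient there, so you should not attribute that step to them. Two minor corrections: the entropy bound reads $\log \mathcal{N}(\eta,\mathcal{G}(L,p,S,F),\|\cdot\|_\infty)\lesssim (S+1)\left[\log\eta^{-1}+\log\left((L+1)V^2\right)\right]$ with $V=\prod_l(p_l+1)$, so the factor $L$ multiplies $\log(\max_l p_l)$ rather than $\log\eta^{-1}$ (your overestimate is harmless at the relevant scale $\log\eta^{-1}\asymp\log n$ and yields the same critical radius $\epsilon_n L\log^2 n$); and the approximation error satisfies $\|f_n-f_\tau^*\|_\infty^2\lesssim\epsilon_n$ with no logarithmic factor, which only strengthens your assembly step.
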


	Notice that  Theorem   \ref{thm5}  shows that the  ReLU network   based estimator  defined  in    (\ref{eqn:estimator})  attains the rate $\epsilon_n$ under  the mean squared error and  the $\ell_2$  metrics, ignoring $L$ and the log factors, for estimating  quantile functions in the  class $\mathcal{H}(q,d,t,\beta,K)$.   Importantly,    the rate  $\epsilon_n$ is minimax for estimating  functions in the  class $\mathcal{H}(q,d,t,\beta,K)$. Specifically, Theorem \cite{schmidt2017nonparametric} showed that if  
	$t_j\leq  \min\{d_0,\ldots,d_{j-1}\}$ for all $j$ then for a constant $c>0$ we have that
	\[
	\underset{\hat{f}}{\inf}\,\,\underset{  f _{0.5}^*  \in  \mathcal{H}(q,d,t,\beta,K)}{\sup}\,\,	\|\hat{f}- f_{0.5}^*\|_{ \ell_2  }^2  \,\geq \,  c\epsilon_n,
	\]
	where the  infimum is taken over all possible estimators, and with the assumption that the errors  are Gaussian and the covariates are uniformly distributed in $[0,1]^d$. Thus,  Theorem \ref{thm5} provides an upper bound  that  nearly matches the lower bound and that it allows for     heavy-tailed  error  distributions
	
	
	
	\subsection{Besov spaces}

	Next we   study    quantile regression with ReLU networks in the context  of Besov  spaces. Our main result from this subsection  will be similar in spirit to Theorem \ref{thm5}  but  under the assumption  that the  quantile function belongs to a  Besov space. To arrive at our main result, we first   introduce  some notation regarding the ReLU class of networks  that we consider.

	\begin{definition}
		\label{def5}
		For  $W ,L\in \mathbb{N}_{+}$,  $S ,B \in \mathbb{R}$  we define the class of sparse  networks  $\mathcal{I}(L,W,S,B)$   as 
		\[
		\begin{array}{lll}
			\mathcal{I}(L,W,S,B) &\,:=\, &\Bigg\{  (A^{(L)}  \phi(\cdot) + b^{(L)} )  \circ \ldots \circ    (A^{(1)}  x + b^{(1)} )\,\,:\,\,    A^{(l)}  \in \mathbb{R}^{W \times W},  \,\,b^{(l)} \in \mathbb{R}^{W}  \\
			& &     \displaystyle \sum_{l=1}^L     \left(\|A^{(l)}\|_0   +  \|b^{(l)}\|_0\right) \leq S, \,\,\,\underset{l }\max \,\max\{  \|A^{(l)} \|_{\infty} ,   \|b^{(l)} \|_{\infty}   \}   \leq B  \Bigg\}.
		\end{array}
		\]
	\end{definition}

	Notice that the space of networks  $	\mathcal{I}(L,W,S,B)$  is actually   similar to $	\mathcal{G}(L,p,S,F)$. The main difference is that the networks in the former class  have weight matrices of the same  size across the different layers. This minor differences are only necessary in order to achieve the  theoretical guarantees  under the  different classes  to which the quantile   function belongs.

	We the notation from  Definition \ref{def5},  we   focus on the estimator 
	\begin{equation}
		\label{eqn:estimator_2}
		\hat{f}  \,=\,      \underset{  f \in \mathcal{I}(L,W,S,B) ,    \,\,   \|f\|_{\infty}\leq  F   }{\arg  \min}\,\,   \sum_{i=1}^{n}  \rho_{\tau}( y_i -  f(x_i)  ),
	\end{equation}
	where  $F>0$ is fixed.

	Before  providing  a statistical guarantee   for  $\hat{f}$ in (\ref{eqn:estimator_2}),  we first  state the required assumptions  imposed on the generative model.
	
	\begin{assumption}
		\label{as4}
		The quantile function satifies  $f_{\tau}^* \in B_{p,q}^s([0,1]^d)$,  $\|f_{\tau}^*\|_{\infty} \leq F$,   where  for $0<p,q\leq \infty$, and   $0<s<\infty$  we have $s\geq d/p$. 	Furthermore, the exists  $m \in \mathbb{N}$  such that   $0<s<\min\{m ,m-1+1/p\}$.  Here,  $B_{p,q}^s([0,1]^d)$ is  a Besov  space in $[0,1]^d$ as Definition \ref{def4} in the Appendix.
	\end{assumption}

	We are now ready to state the main result concerning estimation of a quantile function that belongs to a Besov space.
	
	\begin{theorem}
		\label{thm3}
		Suppose  that Assumptions  \ref{as1}--\ref{as2}  and  \ref{as3}--\ref{as4}  hold. 
		In addition, suppose that  for the class $\mathcal{I}(L,W,S,B)$  the parameters are chosen as
		\[
		\begin{array}{l}
			L =  3+ 2\ceil{    \log_2\left(    \frac{3^{  \max\{d,m\}  }}{  \epsilon  c_{d,m} }\right)  +5 }\ceil{  \log_2 \max\{d,m\} },\,\,\,\,\,\,\, W = W_0 N,\\
			S = (L-1)W_0^2 N +N,\,\,\,\,\,\, B = O\left( N^{  (v^{-1}+   d^{-1})(  \max\{ 1,   (d/p-s)_{+}\} )   }\right),
		\end{array}
		\] 
		for  a constant $c_{d,m} >0 $  that depends on $d$ and $m$, a constant $W_0 >0$,  and where $v= (s-\delta)/\delta$,
		\[
		\delta = d/p,\,\,\,\,\,\,\,  \epsilon = N^{   -s/d-  (v^{-1}  +  d^{-1}) (d/p-s)_{+}    }  + \{\log N\}^{-1}, \,\,\,\,\,\,\,  N \asymp   n^{  \frac{d}{2s+d} }.
		\]
		Then there exists a constant  $C>0$  such that with probability approaching  one, we have that 
		\[
		\max\left\{		\|\hat{f}- f_{\tau}^*\|_{ \ell_2  }^2 ,			\|\hat{f}- f_{\tau}^*\|_{n  }^2 \right\}   \,\leq \, C   \frac{(\log n)^2}{n^{ \frac{2s}{2s+d}  }    },
		\]
		with  $\hat{f}$ defined as (\ref{eqn:estimator_2}).
	\end{theorem}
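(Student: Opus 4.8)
The plan is to build a quantile-loss analogue of the classical least-squares oracle inequality and then supply it with the Besov approximation theory of \cite{suzuki2018adaptivity}. The starting point is the basic inequality from optimality of $\hat f$ in (\ref{eqn:estimator_2}): let $f_n\in\cI(L,W,S,B)$ with $\|f_n\|_\infty\le F$ be a best $\|\cdot\|_\infty$-approximation of $f_\tau^*$ inside the class (the Besov network from \cite{suzuki2018adaptivity}, clipped to $[-F,F]$, which can only decrease the error since $\|f_\tau^*\|_\infty\le F$). Then $\sum_i\rho_\tau(y_i-\hat f(x_i))\le\sum_i\rho_\tau(y_i-f_n(x_i))$, and subtracting $\sum_i\rho_\tau(y_i-f_\tau^*(x_i))$ from both sides expresses the population excess quantile risk of $\hat f$ in terms of the approximation error $\mathrm{err}_1$ of Definition~\ref{def1} plus a centered empirical process term $(\E_n-\E)\big[\rho_\tau(\cdot-\hat f)-\rho_\tau(\cdot-f_\tau^*)\big]$.

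First I would record the two curvature bounds that make $\Delta$ (rather than $\|\cdot\|_{\ell_2}$) the natural metric. For the lower bound, Assumption~\ref{as2} (density at least $\underline p$ on an $L_\infty$-neighbourhood of $f_\tau^*(x_i)$) together with a second-order expansion of $t\mapsto\E[\rho_\tau(y_i-f_\tau^*(x_i)-t)]$ shows the population excess quantile risk dominates $c\,\Delta_n^2(\hat f,f_\tau^*)$; the truncation $D^2(t)=\min\{|t|,t^2\}$ in (\ref{eqn:loss2}) is exactly what accommodates the fact that the quadratic lower bound only holds locally, as in \cite{padilla2020adaptive}. For the upper bound, the density being at most $c$ gives $\mathrm{err}_1\lesssim c\,\|f_n-f_\tau^*\|_{\ell_2}^2$, so the approximation contribution is governed by the \emph{squared} $L_2$ approximation error.

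The stochastic term is where the fast rate is earned, and I expect it to be the main obstacle. Rather than invoking Theorem~\ref{thm:risk} directly (which only yields the slow rate $\sqrt{\mathrm{complexity}/n}$), I would localize: partition $\{f\in\cI(L,W,S,B):\|f\|_\infty\le F\}$ into dyadic shells $\Delta(f,f_\tau^*)\asymp 2^j\delta$ and bound $\sup$ of the centered empirical process over each shell by a Dudley entropy integral, using $1$-Lipschitzness of $\rho_\tau$ to pass from the loss class to the function class and then covering-number/pseudo-dimension estimates for sparse deep ReLU networks from \cite{bartlett2019nearly} (pseudo-dimension $O(SL\log S)$, with the growing weight bound $B$ entering only through a $\log B\asymp\log n$ factor). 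This produces a shell bound of order $\Delta(f,f_\tau^*)\,r_n+r_n^2$ with $r_n^2\asymp SL\log(U)\log n/n$; combining it with the two curvature bounds via a standard peeling argument gives, with probability tending to one, $\Delta_n^2(\hat f,f_\tau^*)\lesssim \mathrm{err}_1+r_n^2$, and likewise for $\Delta^2$ after an additional uniform step (same covering numbers, using $\|f\|_\infty\le F$) comparing $\|\cdot\|_n$ and $\|\cdot\|_{\ell_2}$ on the class. Getting the entropy integral sharp enough that only a $(\log n)^{O(1)}$ factor survives, and making the peeling interact correctly with the merely-local curvature, is the delicate part.

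Finally I would insert the prescribed architecture. With $N\asymp n^{d/(2s+d)}$, depth $L\asymp\log(1/\epsilon)$, width $W=W_0N$, and sparsity $S\asymp (L-1)W_0^2N+N$, the construction of \cite{suzuki2018adaptivity} approximates $f_\tau^*\in B^s_{p,q}([0,1]^d)$ with $\|f_n-f_\tau^*\|_\infty\lesssim\epsilon$ (the $(d/p-s)_+$ correction vanishes under Assumption~\ref{as4} since $s\ge d/p$), hence $\mathrm{err}_1\lesssim\epsilon^2\asymp n^{-2s/(2s+d)}$ up to logarithms, while $r_n^2\asymp SL\log(U)\log n/n\asymp N(\log n)^{O(1)}/n\asymp n^{-2s/(2s+d)}(\log n)^{O(1)}$. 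Summing the two and collapsing the logarithmic powers yields $\Delta^2(\hat f,f_\tau^*)\vee\Delta_n^2(\hat f,f_\tau^*)\lesssim n^{-2s/(2s+d)}(\log n)^2$ with probability approaching one; converting to $\|\cdot\|_{\ell_2}^2$ and $\|\cdot\|_n^2$ costs only a factor $\max\{1,2F\}$, because $|\hat f-f_\tau^*|\le 2F$ forces $D^2(\hat f-f_\tau^*)$ to be comparable to $(\hat f-f_\tau^*)^2$ on the relevant range, which gives the stated bound.
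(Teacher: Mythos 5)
Your outline is essentially the paper's own argument: a two-sided use of Assumption \ref{as2} (local quadratic minorization of the excess quantile risk in the truncated metric $\Delta$, plus the density upper bound to turn the approximation term into a squared $L_2$/sup-norm quantity), a localized empirical-process bound with dyadic peeling (the paper does this via Lemmas \ref{lem2}--\ref{lem6}, using Theorem 2.1 of \cite{bartlett2005local}, symmetrization/contraction and Dudley's entropy integral, with shells centered at $f_n$ rather than $f_\tau^*$ and the cross term $\|f_n-f_\tau^*\|_\infty\Delta(\hat f,f_n)\sqrt{F}$ playing the role of your $\mathrm{err}_1$), and finally the approximation and entropy results of \cite{suzuki2018adaptivity} with $N\asymp n^{d/(2s+d)}$, $\|f_n-f_\tau^*\|_\infty\lesssim N^{-s/d}$.

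One step needs more care than you give it: the complexity control cannot be delegated to the parameter-count pseudo-dimension bound of \cite{bartlett2019nearly}, because the class $\mathcal{I}(L,W,S,B)$ has width $W=W_0N$ and hence on the order of $LW_0^2N^2\asymp n^{2d/(2s+d)}$ potential parameters, which would give a localized rate of order $N^2\,\mathrm{polylog}(n)/n$ and destroy the theorem. The bound must exploit the sparsity $S\asymp LN$; \cite{bartlett2019nearly} does not state a pseudo-dimension bound $O(SL\log S)$ for the union over all sparsity patterns, so you would either have to add a union argument over the $\binom{LW^2}{S}$ patterns (which works but introduces extra $\log$ factors beyond the stated $(\log n)^2$) or, as the paper does, use the $\|\cdot\|_\infty$-covering bound $\log\mathcal{N}(\delta)\lesssim S\log(\delta^{-1}(L+1)V^2B)$ from \cite{suzuki2018adaptivity} (cf.\ Lemma 5 of \cite{schmidt2017nonparametric}) inside Dudley's integral; this covering route is also what yields exactly the $N(\log N)^2/n\asymp n^{-2s/(2s+d)}(\log n)^2$ bound rather than a looser polylogarithmic factor. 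With that substitution your argument matches the paper's proof.
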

	
	Notably,  Theorem \ref{thm3} shows that  the neural network  based   quantile estimator  $\hat{f}$ attains the rate $n^{ -\frac{2s}{2s+d}  } $, ignoring  logarithmic factors,  for estimating the quantile  function. This results  generalizes Theorem  2 from  \cite{suzuki2018adaptivity}  to the quantile regression setting. In particular, Theorem \ref{thm3}  holds under general assumptions of the errors allowing for heavy-tailed distributions. Furthermore,  the rate $n^{ -\frac{2s}{2s+d}  } $ is minimax  for estimation, with Gaussian errors, of the conditional mean  when such function  belongs to a  fixed ball of the space  $B_{p,q}^s([0,1]^d)$, see \cite{donoho1998minimax} and \cite{suzuki2018adaptivity}.


\section{Experiments}
\label{sec:experiments}

We study the performance of ReLU networks for quantile regression across a suite of heavy-tailed synthetic and real-data benchmarks. The benchmarks include both univariate and multivariate responses. For univariate responses, we compare ReLU methods against quantile regression versions of random Forests \citep{meinshausen2006quantile} and splines \citep{koenker1994quantile,he1994convergence}. In the univariate synthetic benchmarks, ReLU networks are shown to outperform random Forests in all of the tested settings; splines outperform ReLU networks only when the true response function is smooth. For multivariate responses, we consider the two different loss functions for multivariate quantiles proposed in \cref{sub_sec:multivariate}. In both univariate and multivariate responses, the ReLU networks with quantile-based losses perform better when estimating the mean than using a squared error loss.

\subsection{Univariate response}
\label{sec:uni_simulations}

We assess the performance of quantile regression with ReLU networks (Quantile Networks) on five different generative models. Each model involves a set of covariates and a univariate response target. The covariates determine the location of the response and a zero-mean, symmetric function with heavy tails is used as the noise distribution; we focus here on Student's t and Laplace distributions.

We compare Quantile Networks with three other nonparametric methods: (i) mean squared error regression with ReLU networks (SqErr Networks), as in Problem \ref{eqn:mse}; (ii) quantile regression with natural splines \citep[Quantile Splines,][]{koenker1994quantile,he1999quantile}; and quantile regression with random Forests \citep[Quantile Forests,][]{meinshausen2006quantile}. For the two neural network methods, we train the models using stochastic gradient descent (SGD) as implemented in PyTorch~\citep{paszke:etal:2019:pytorch} with Nesterov momentum of $0.9$, starting learning rate of $0.1$, and stepwise decay $0.5$. The neural network models also use the same architecture: two hidden layers of $200$ units each, with dropout rate of $0.1$ and batch normalization in each layer. For the other two nonparametric methods, we choose parameters to be flexible enough to capture a large number of nonlinearities while still computationally feasible on a laptop for moderate-sized problems. For Quantile Splines, we use a natural spline basis with $3$ degrees of freedom; we use the implementation available in the \texttt{statsmodels} package.\footnote{\url{https://www.statsmodels.org}} For Regression Forests, we use $100$ tree estimators and a minimum sample count for splits of $10$; these are defaults in the \texttt{scikit-garden} package.\footnote{\url{https://scikit-garden.github.io/}}

We assess the performance of all methods using the mean squared error (MSE) between the estimated and true quantile functions. In each experiment, the methods are estimated at different training sample sizes $n$, $n\in \{100,1000,10000\}$, and different quantile levels $\tau$,  $\tau\in\{0.05,0.25,0.50,0.75,0.95\}$. Since the SqErr Network only estimates the mean, we only evaluate it at $\tau=0.50$, which is equivalent to the mean in all benchmarks. For each benchmark, we generate $25$ datasets independently from the same generative model and evaluate performance using $10000$ sampled covariates with the corresponding true quantile. In each scenario the data are generated following the same location-plus-noise template,
\[
\begin{array}{lll}
y_i  &=& f_0(x_i)  +\epsilon_i,  \,\,\, i= 1,\ldots,n,\\
x_i  &  \overset{\text{ind}}{\sim}& [0,1]^d,
\end{array}
\]
where  $\epsilon_i \sim G_i$  for a distribution  $G_i$  in  $\mathbb{R}$, and  with $f_0 \,:\,[0,1]^d \rightarrow \mathbb{R}$  for  a choice  of $d$  that  is scenario dependent. 
We consider 5 different scenarios following this template:

\paragraph{Scenario 1.}  We set 
\[
\begin{array}{lll}
f_0(q)  & = &  g_2\circ g_1(q), \,\,\,\, \forall q\in \mathbb{ R}^2\\
g_1(q)  & = &  (\sqrt{q_{1}} +   q_{1}\cdot q_{2},  \mathrm{cos}(2\pi q_{2})  )^{\top}  ,\,\,\,\,\, \forall  q\in \mathbb{ R}^2,  \\  
g_2(q)  &=  & \sqrt{q_1 +   q_2^2 }   +   q_1^2 \cdot q_2,    ,\,\,\,\,\, \forall  q\in \mathbb{ R}^2,  \\   
\end{array}
\]
and   $\epsilon_i  =  v_i g_3(x_i) $ where
\[
g_3(q)  =   \|q-(1/2,1/2)^{\top}\|,  \,\,\,\,\, \forall  q\in \mathbb{ R}^2, 
\] 
with  $v_i  \overset{ \mathrm{ind} } { \sim} t(2)$, for  $i=1,\ldots,n$, where  $t(2)$ is the t-distribution with  2 degrees of freedom. 


\paragraph{Scenario 2.} In this scenario we specify 
\[
f_0(q) =   q_1^2+q_2^2,\,\,\,\,q\in [0,1]^2,
\]
and generate $\epsilon_i  \overset{\text{ind}}{\sim} \mathrm{Laplace}(0,2)$ for  $i=1,\ldots,n$.

\paragraph{Scenario 3.} This is constructed by defining $f_0 \,:\, [0,1]^2 \rightarrow \mathbb{ R}$ as
\[
\begin{array}{lll}
f_0(q) &=   & \begin{cases}
\sqrt{q_1+q_2} +1  & \text{if}\,\,\,\,\,\,   q_1<0.5,\\
\sqrt{q_1+q_2}  & \text{otherwise},   \\
\end{cases}\\
\end{array}
\]
and setting
\[
\epsilon_i = \sqrt{x_i^{\top} \beta} \nu_i,
\]
where  $\beta =  (1,1/2)^{\top}$ and  $\nu_i  \overset{\text{ind}}{\sim}   t(2)$ for  $i=1,\ldots,n$.

\paragraph{Scenario 4.} The function $f_0$ is chosen as
\[
\begin{array}{lll}
f_0(q)   &  = &   \sqrt{q_1+q_2+q_3+q_4+q_5},  \,\, q \in [0,1]^d,\\
\end{array}
\]
with   $d=5$, and the errors as $\epsilon_i  \overset{\text{ind}}{\sim} \mathrm{Laplace}(0,2)$ for $i=1,\ldots,n$. Here,  $\mathrm{Laplace}(0,2)$  is the Laplace distribution with  mean zero and scale parameter $2$.

\begin{figure}
	\begin{subfigure}{.33\textwidth}
		\caption{Quantile Forests, $\tau =0.05$.}
		\includegraphics[width=2.2in,height=2in]{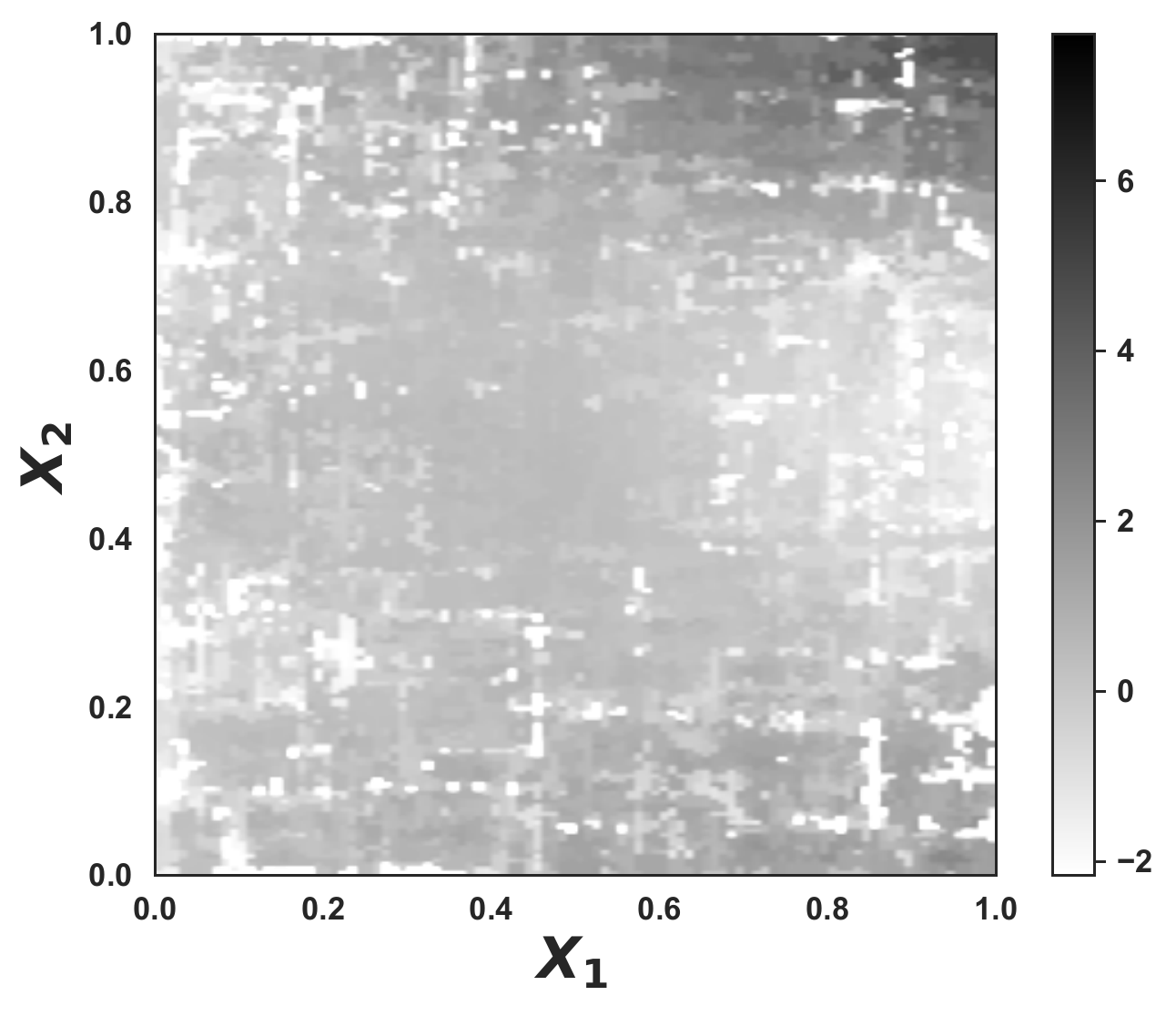}
	\end{subfigure}
	\begin{subfigure}{.33\textwidth}
		\caption{Quantile Network, $\tau =0.05$.}
		\includegraphics[width=2in,height=2in]{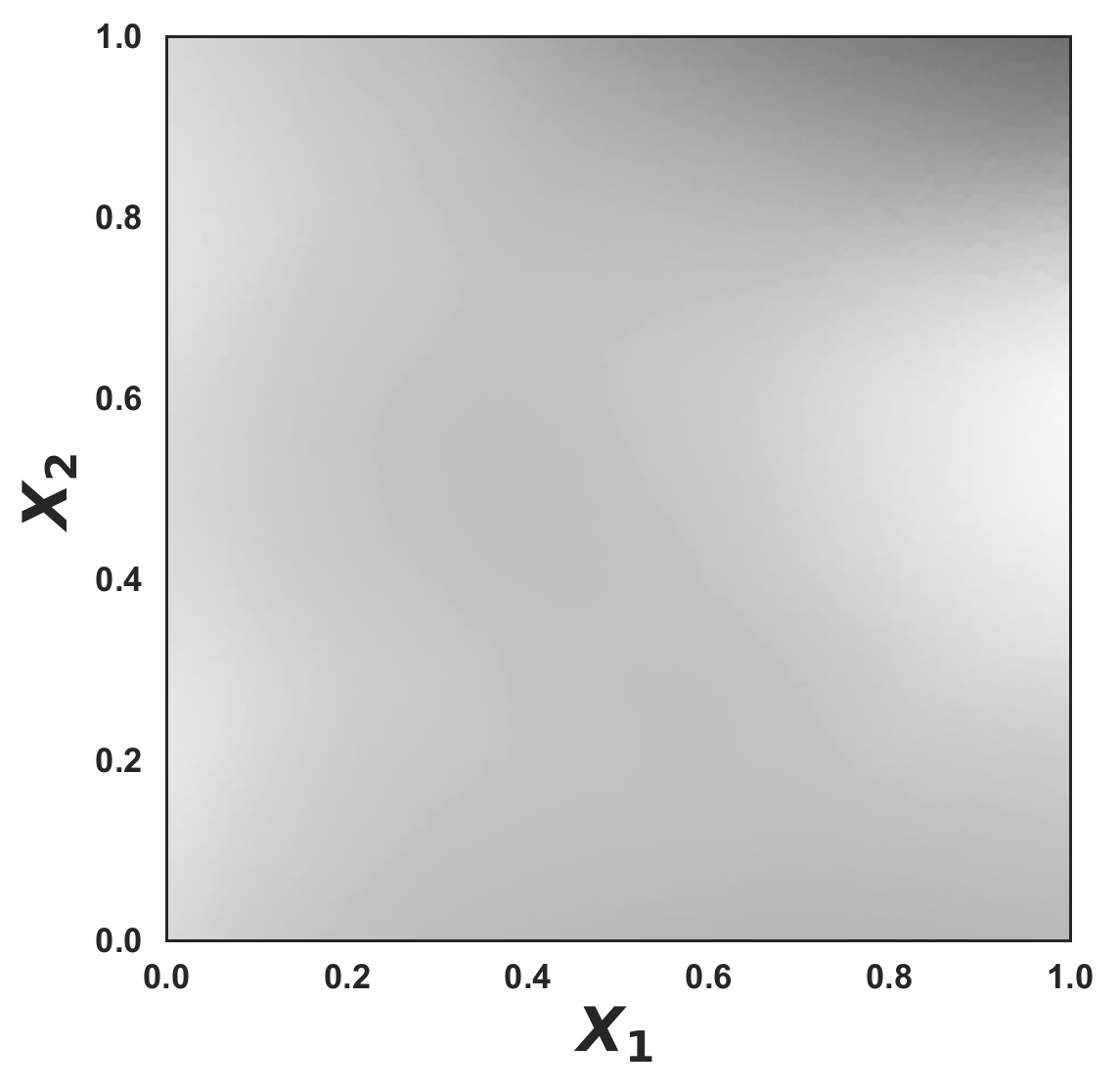}
	\end{subfigure}
	\begin{subfigure}{0.33\textwidth}
		\caption{True $f_{\tau}^*$, $\tau =0.05$.}%
		\includegraphics[width=2in,height=2in]{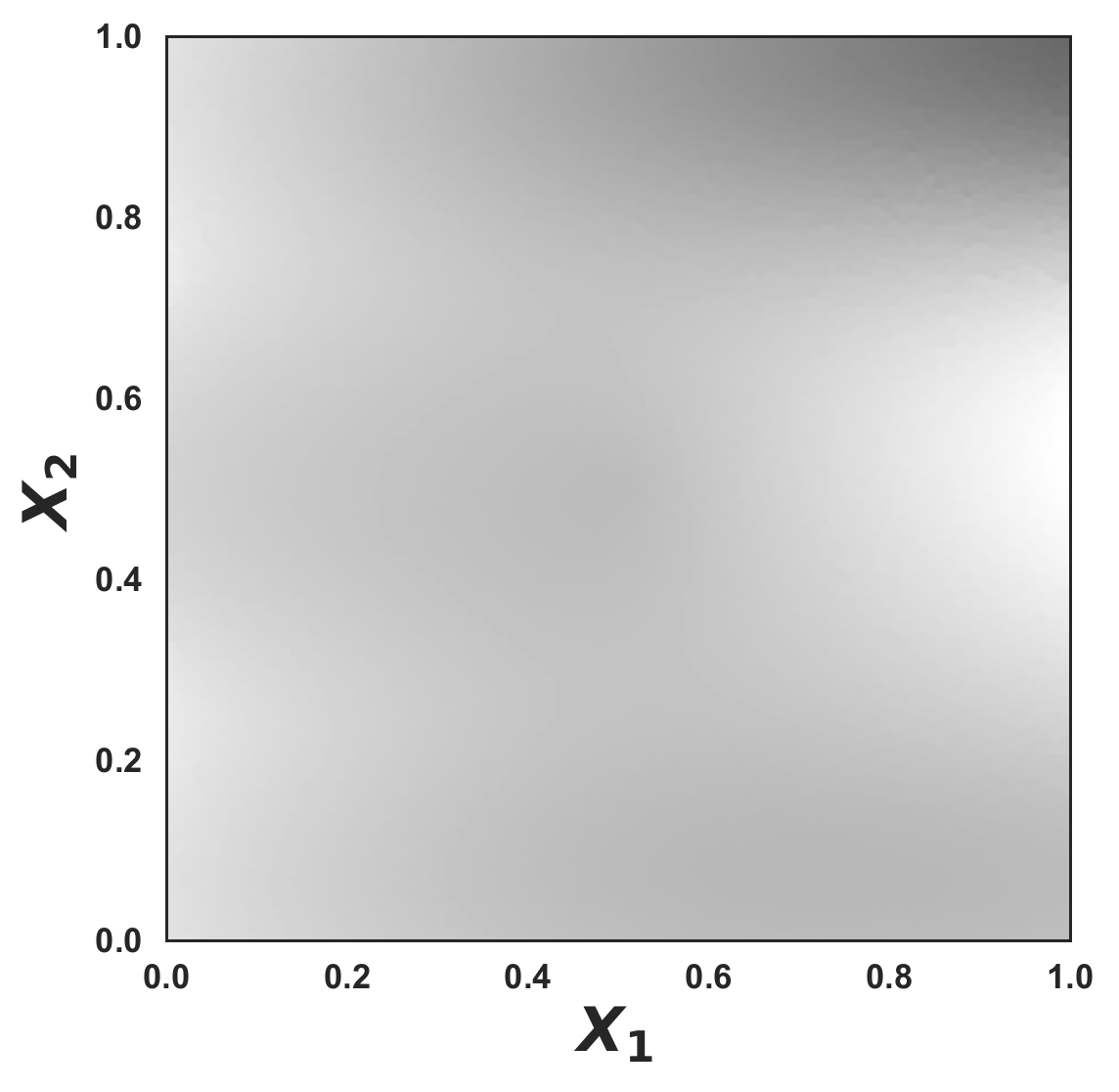}
	\end{subfigure}\\
	\begin{subfigure}{.33\textwidth}
		\caption{Quantile  Forests, $\tau =0.50$.}
		\includegraphics[width=2.2in,height=2in]{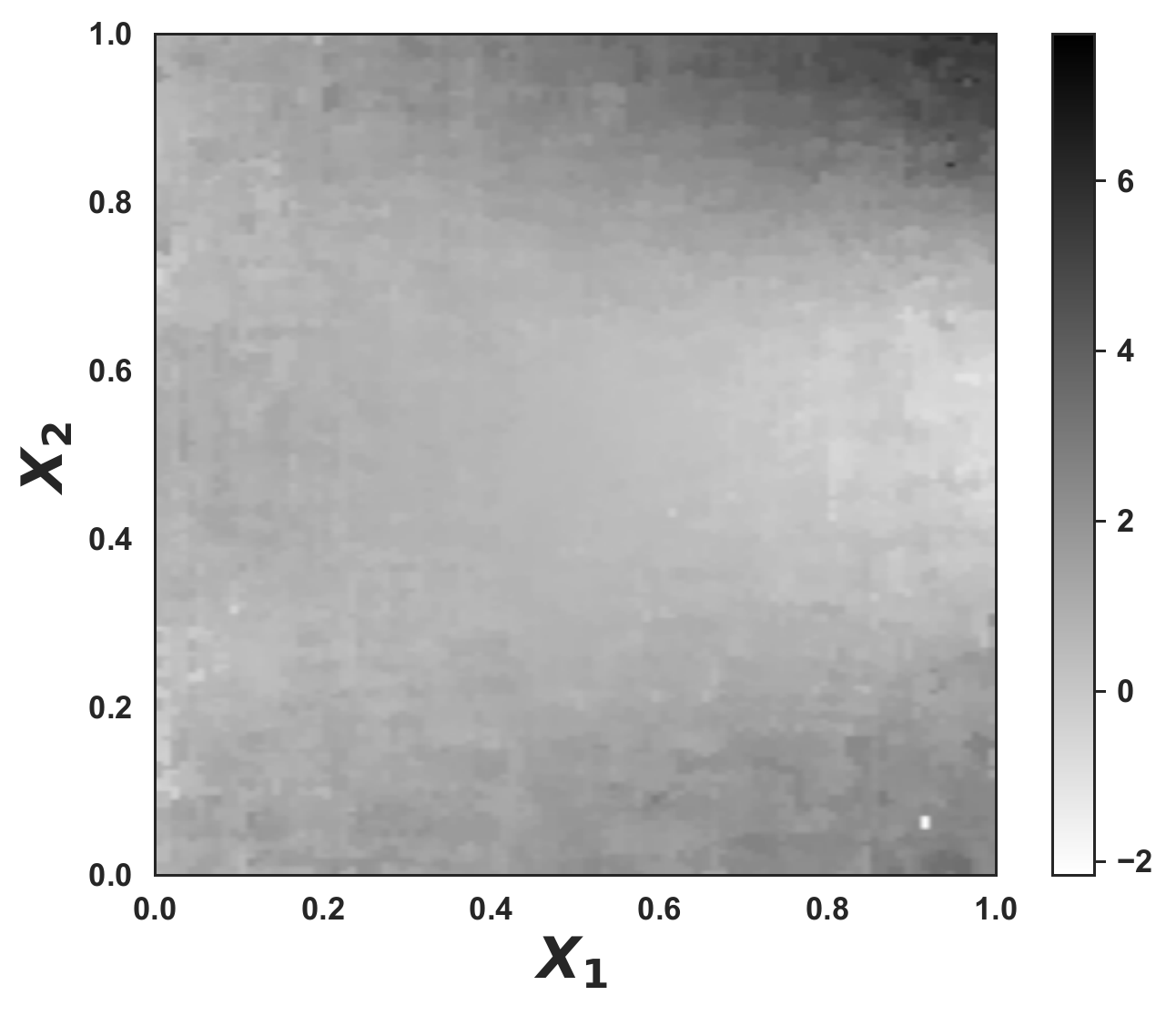}
	\end{subfigure}
	\begin{subfigure}{.33\textwidth}
		\caption{Quantile Network, $\tau =0.50$.}
		\includegraphics[width=2in,height=2in]{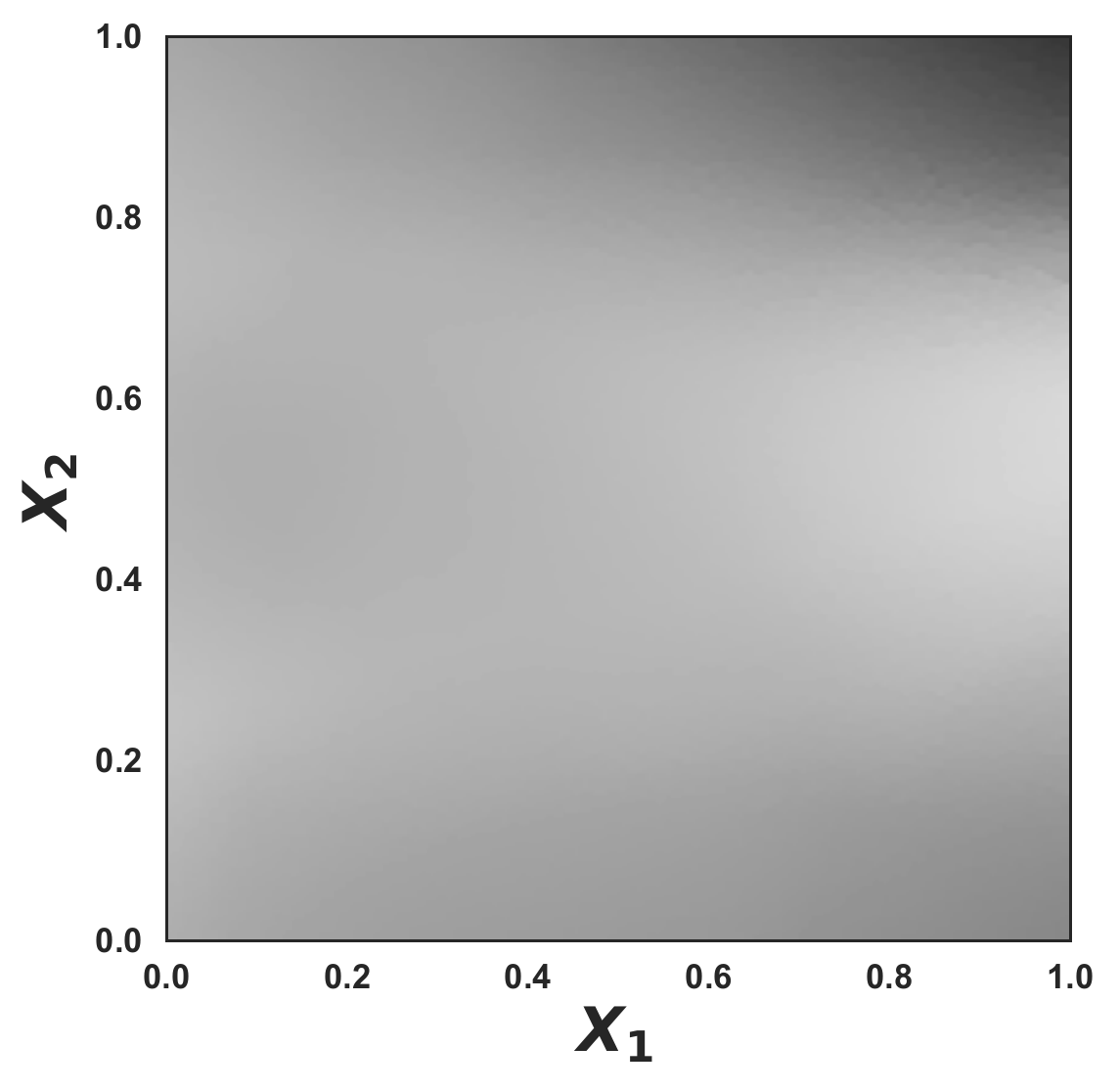}
	\end{subfigure}
	\begin{subfigure}{0.33\textwidth}
		\caption{True $f_{\tau}^*$, $\tau =0.50$.}%
		\includegraphics[width=2in,height=2in]{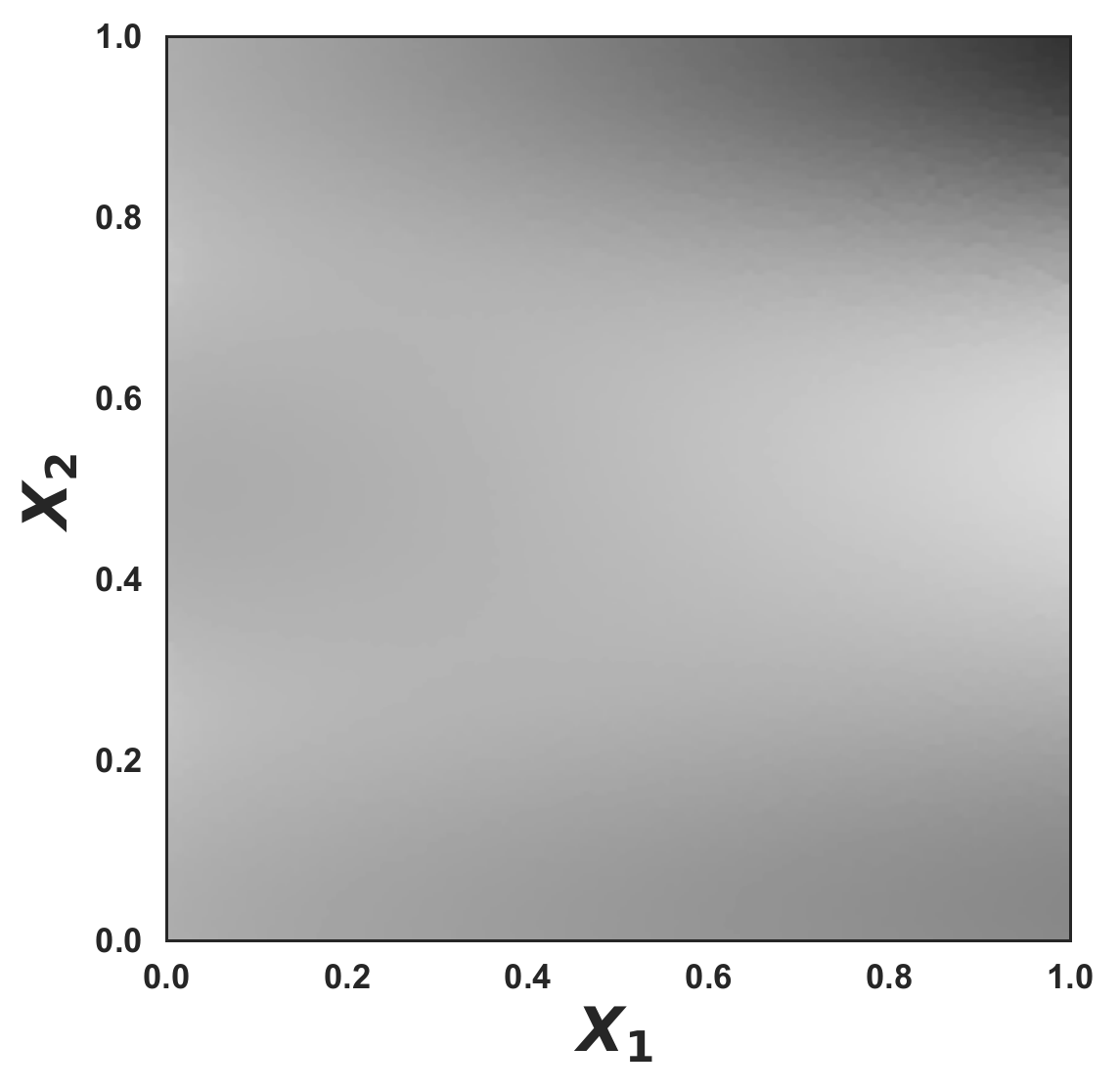}
	\end{subfigure}\\
	\begin{subfigure}{.33\textwidth}
		\caption{Quantile Forests, $\tau =0.95$.}
		\includegraphics[width=2.2in,height=2in]{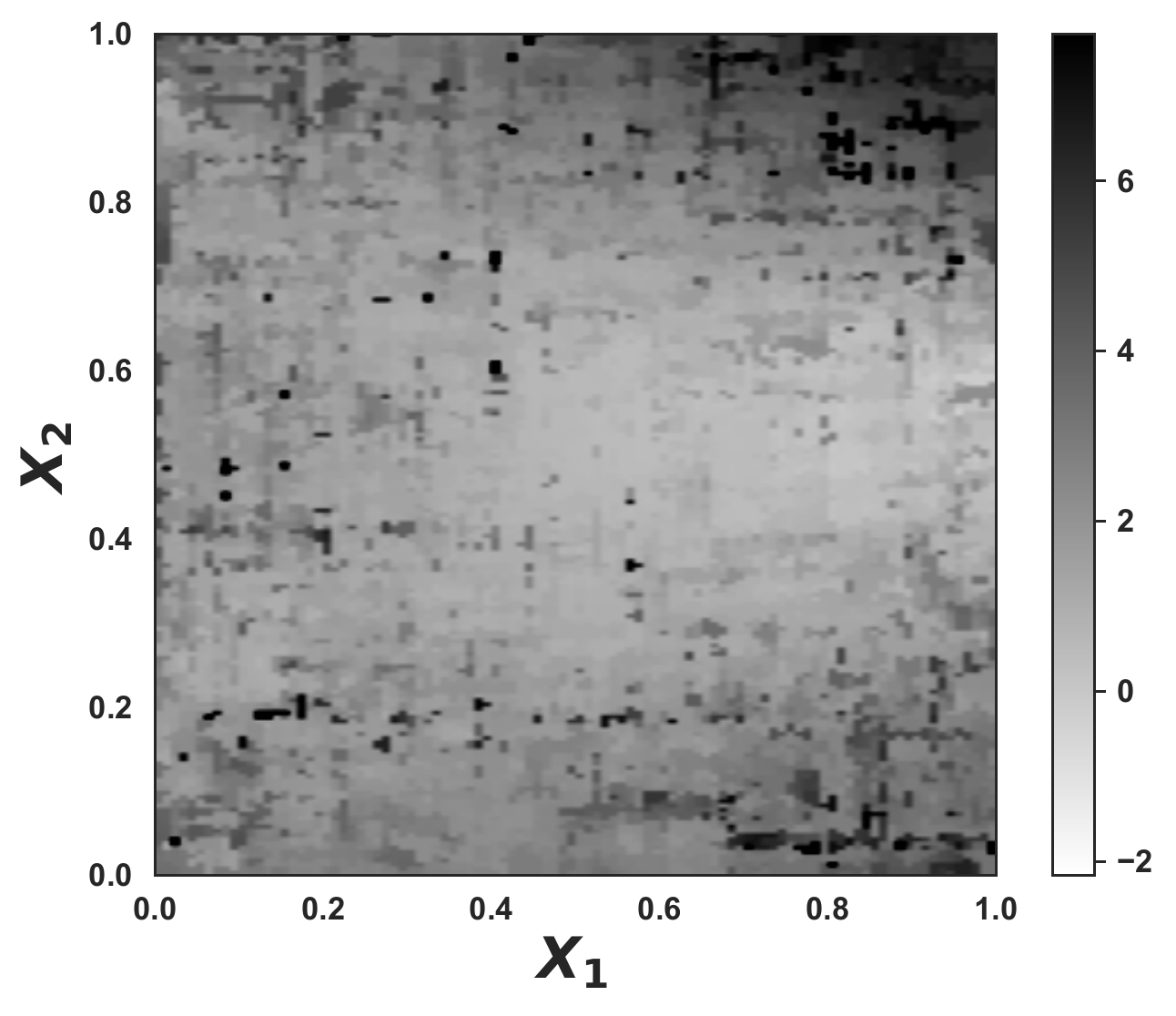}
	\end{subfigure}
	\begin{subfigure}{.33\textwidth}
		\caption{Quantile Network, $\tau =0.95$.}
		\includegraphics[width=2in,height=2in]{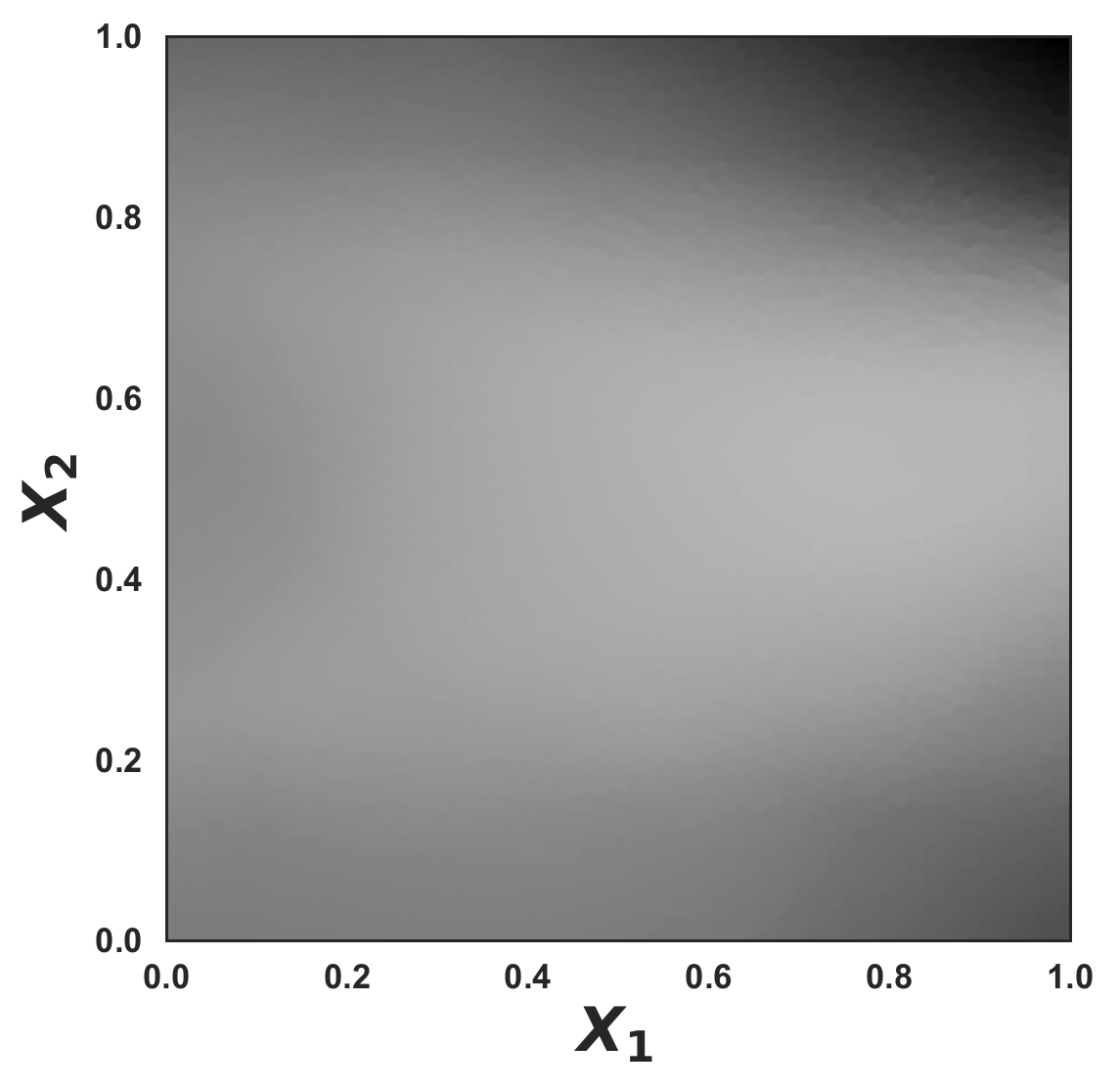}
	\end{subfigure}
	\begin{subfigure}{0.33\textwidth}
		\caption{True $f_{\tau}^*$, $\tau =0.95$.}%
		\includegraphics[width=2in,height=2in]{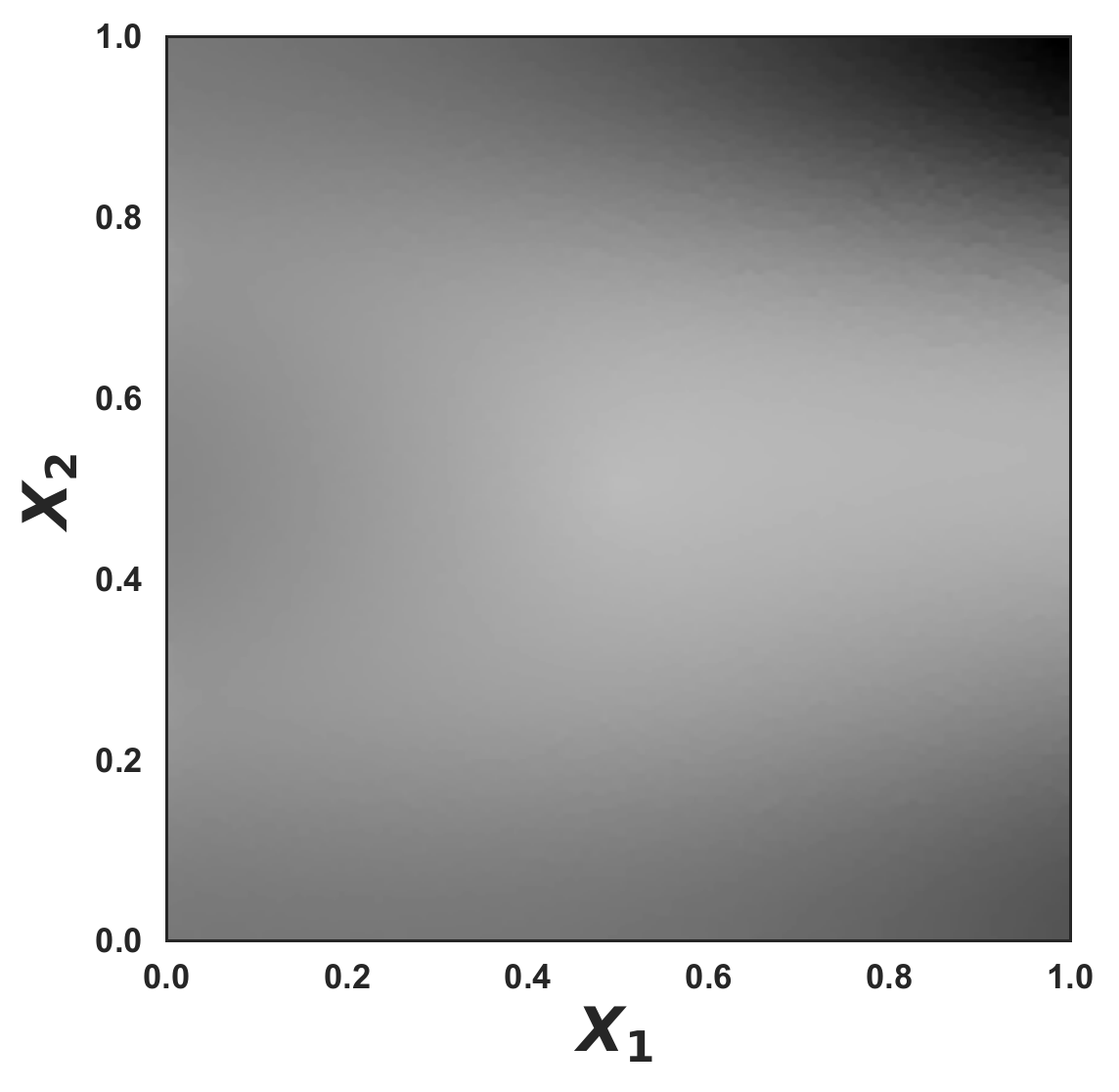}
	\end{subfigure}\\
	%
	%
	
	\caption{	\label{fig1} One instance of the true quantile function with $\tau \in\{0.05,0.50,0.95 \}$  and  its corresponding estimates based on Quantile Network and Quantile Forests. Here $n=10000$ and the data are generated under Scenario 1.}
\end{figure}

\paragraph{Scenario 5.} The function $f_0 \,:\,[0,1]^{10} \rightarrow \mathbb{R}$ is defined as  $f_0(q) = g_3  \circ g_2 \circ g_1(q)$ where 
\[
\begin{array}{lll}
g_1(q) & =& (   \sqrt{q_1^2 +    \sum_{j=2}^{10} q_j  },  (\sum_{j=1}^{10} q_j )^3 )^{\top},  \,\, q \in [0,1]^{10},\\
g_2(q)   &  = &  (\vert q_1\vert,q_2 \cdot q_1  )^{\top} ,  \,\, q \in [0,1]^2,\\
g_3(q) & =&  q_1 + \sqrt{q_1+q_2} ,  \,\, q \in [0,1]^2,\\
\end{array}
\]
where  $\epsilon_i  \overset{\text{ind}}{\sim}  t(3)$ for  $i=1,\ldots,n$, and $t(3)$  denotes the $t$-distribution with $3$ degrees of freedom.


\begin{figure}[t!]
	\centering
	\begin{subfigure}{0.43\textwidth}
		\caption{ $f_{\tau}^*$ }%
		\includegraphics[width=2.4in,height=2.1in]{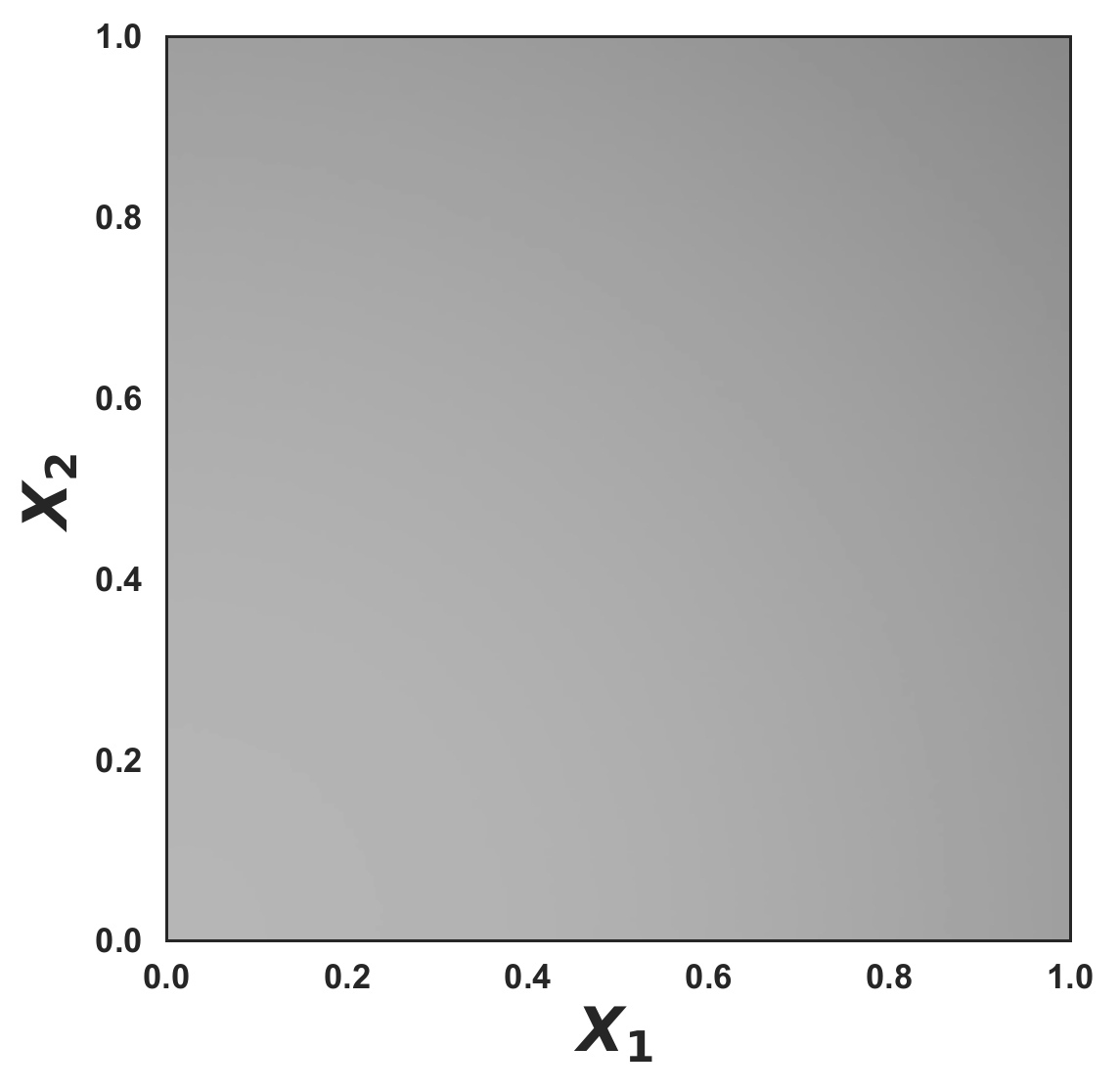}
	\end{subfigure}
	\begin{subfigure}{.43\textwidth}
		\caption{Quantile  Forests}
		\includegraphics[width=2.75in,height=2.1in]{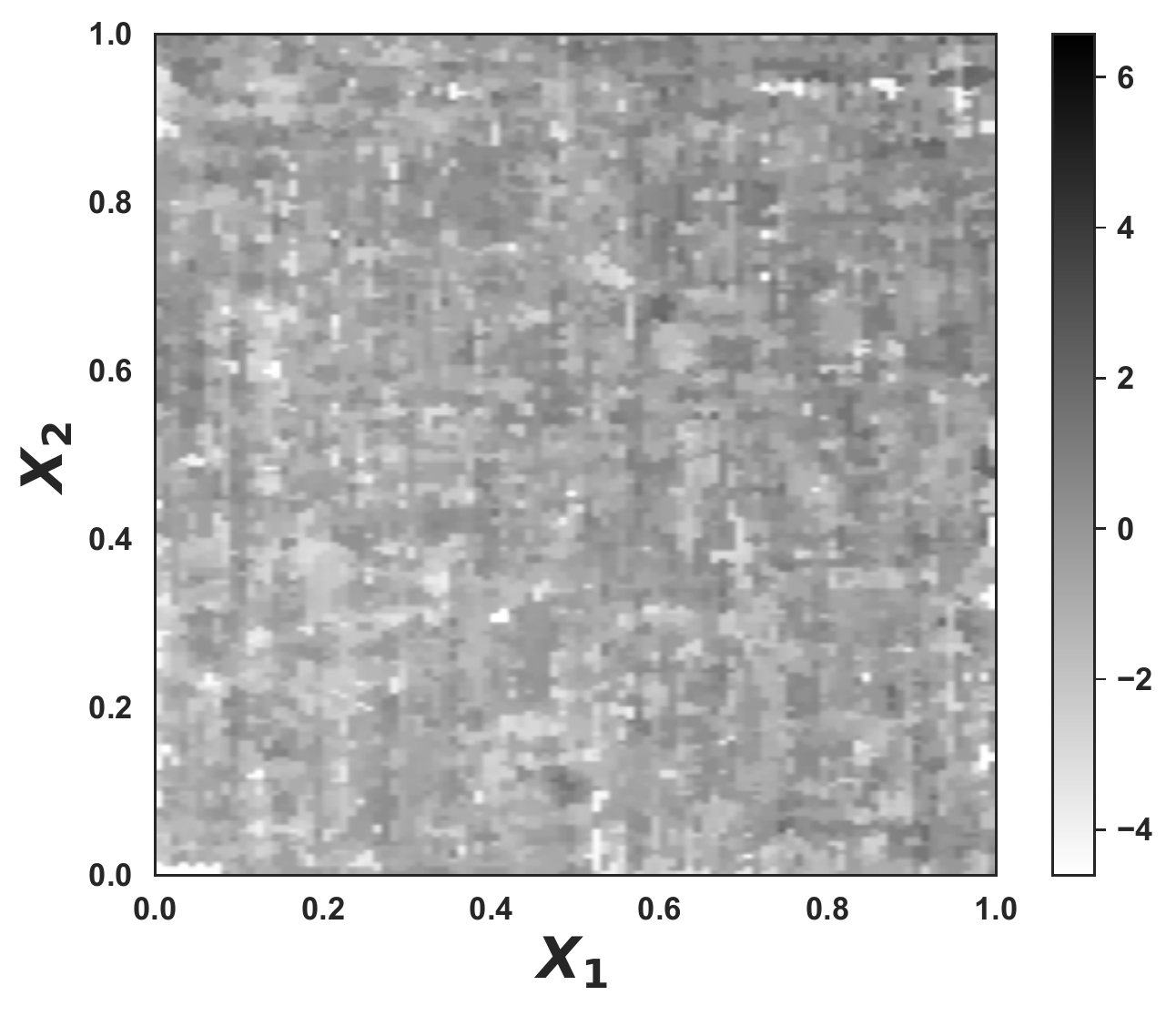}
	\end{subfigure}
	\begin{subfigure}{.43\textwidth}
		\caption{Quantile Network}
		\includegraphics[width=2.4in,height=2.1in]{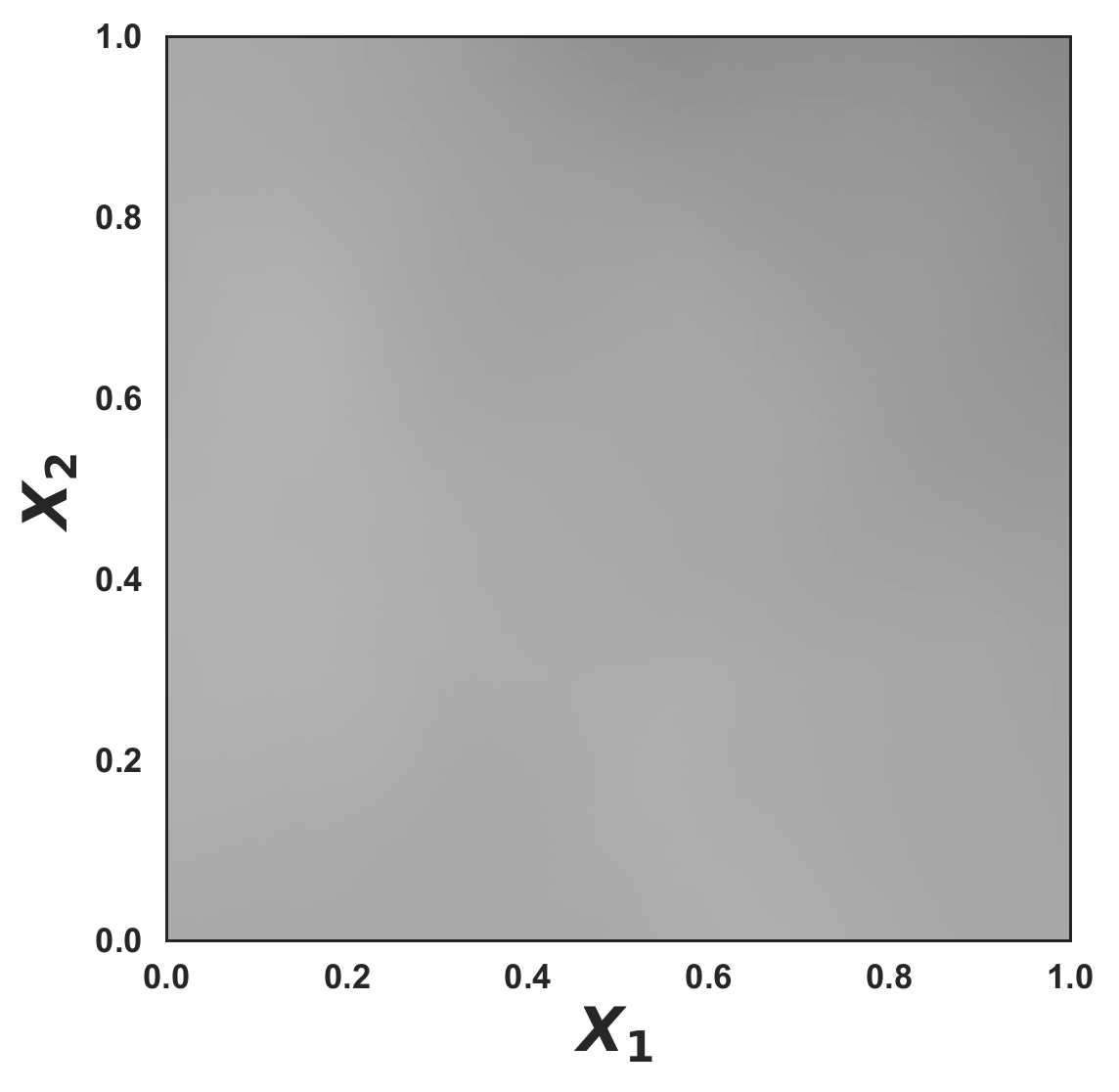}
	\end{subfigure}
	\begin{subfigure}{.43\textwidth}
		\caption{Quantile Spline}
		\includegraphics[width=2.4in,height=2.1in]{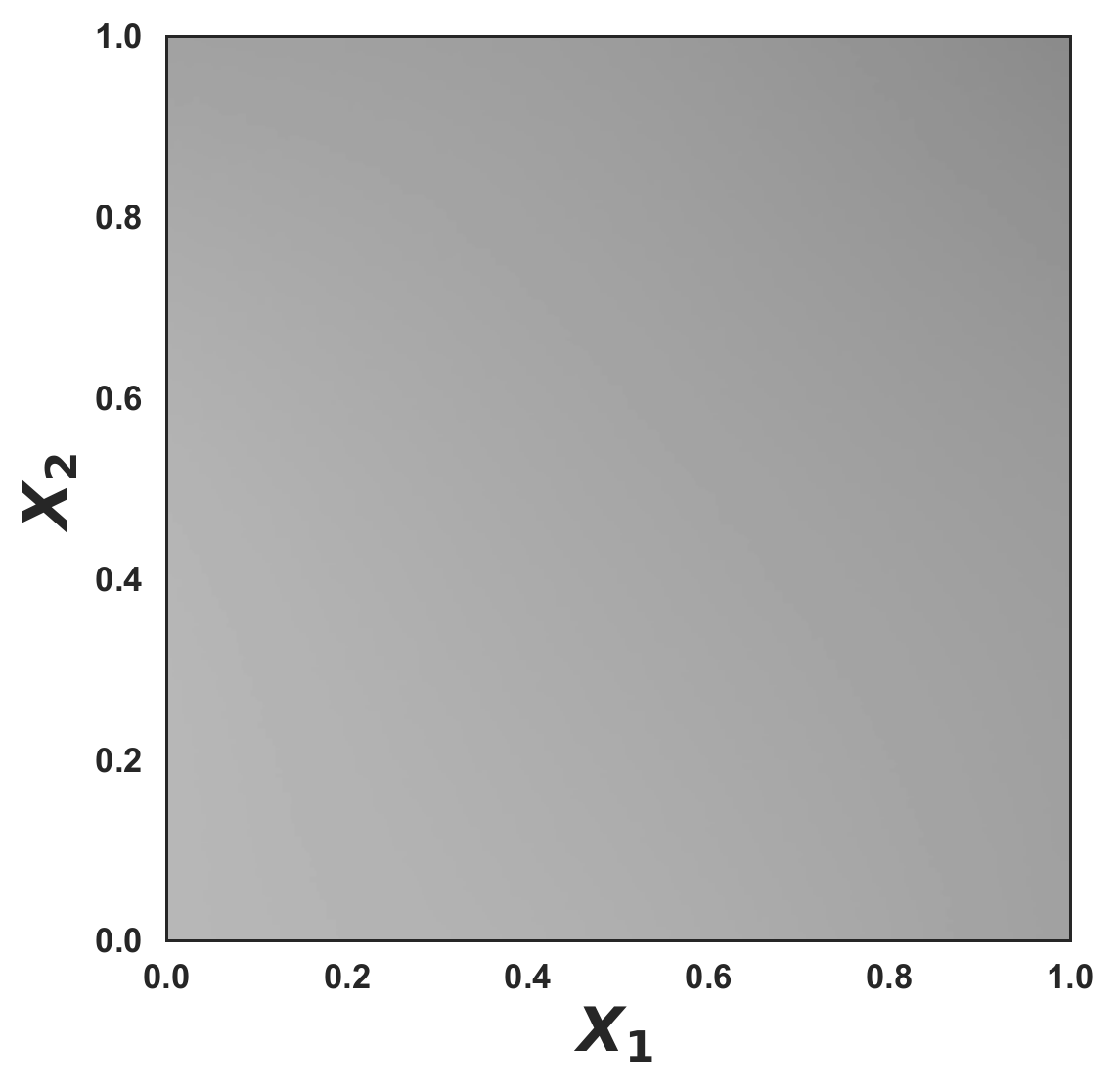}
	\end{subfigure}
	\caption{\label{fig2} One instance of the true quantile function with $\tau=0.25$  and  its corresponding estimates obtained from different methods. Here $n=10000$ and the data are generated under Scenario 2.}
\end{figure}

\begin{figure}[t!]
	\begin{subfigure}{.33\textwidth}
		\caption{Quantile Forests, $\tau =0.50$}
		\includegraphics[width=2in,height=1.7in]{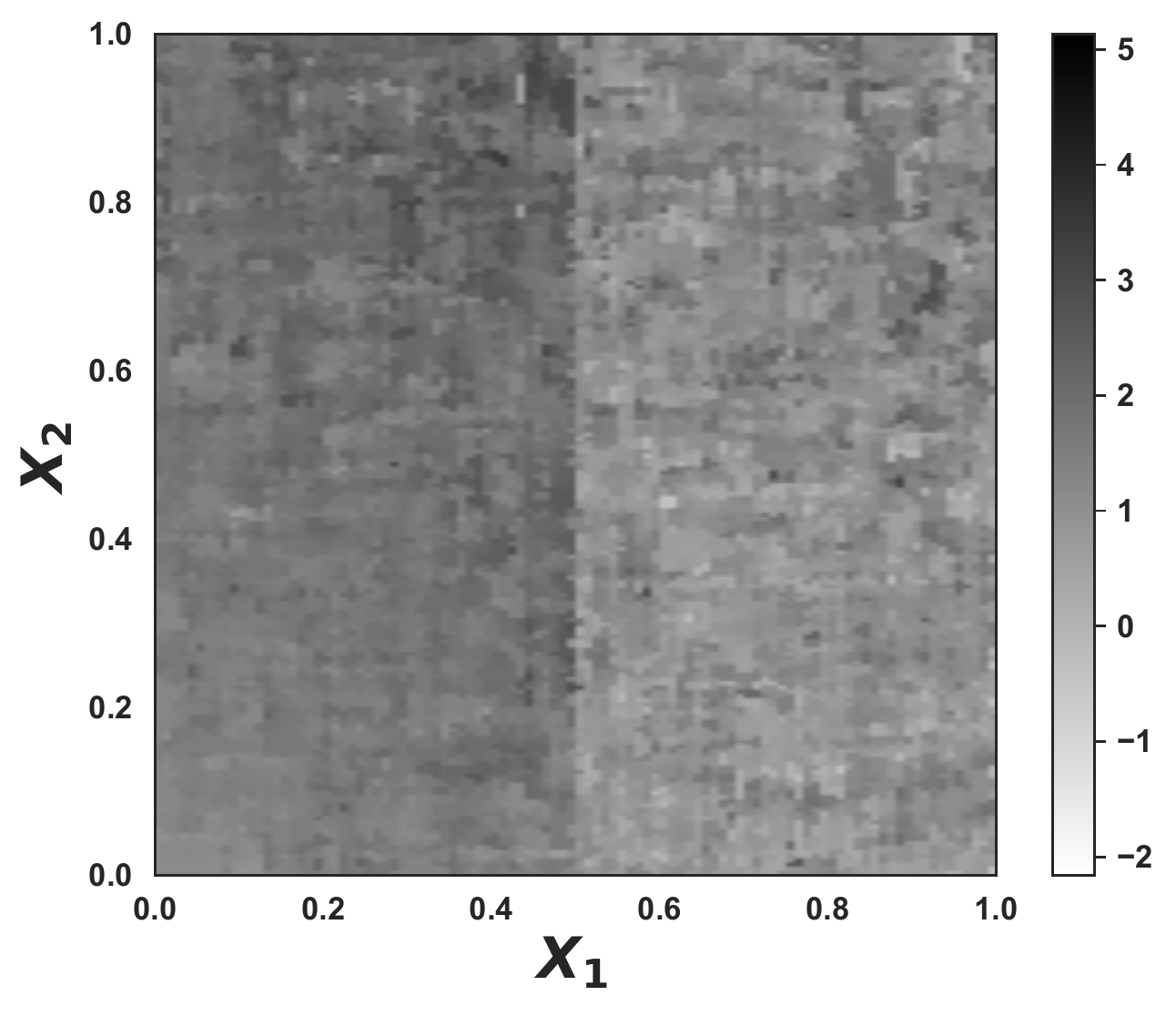}
	\end{subfigure}
	\begin{subfigure}{.33\textwidth}
		\caption{Quantile Network, $\tau =0.50$}
		\includegraphics[width=1.8in,height=1.7in]{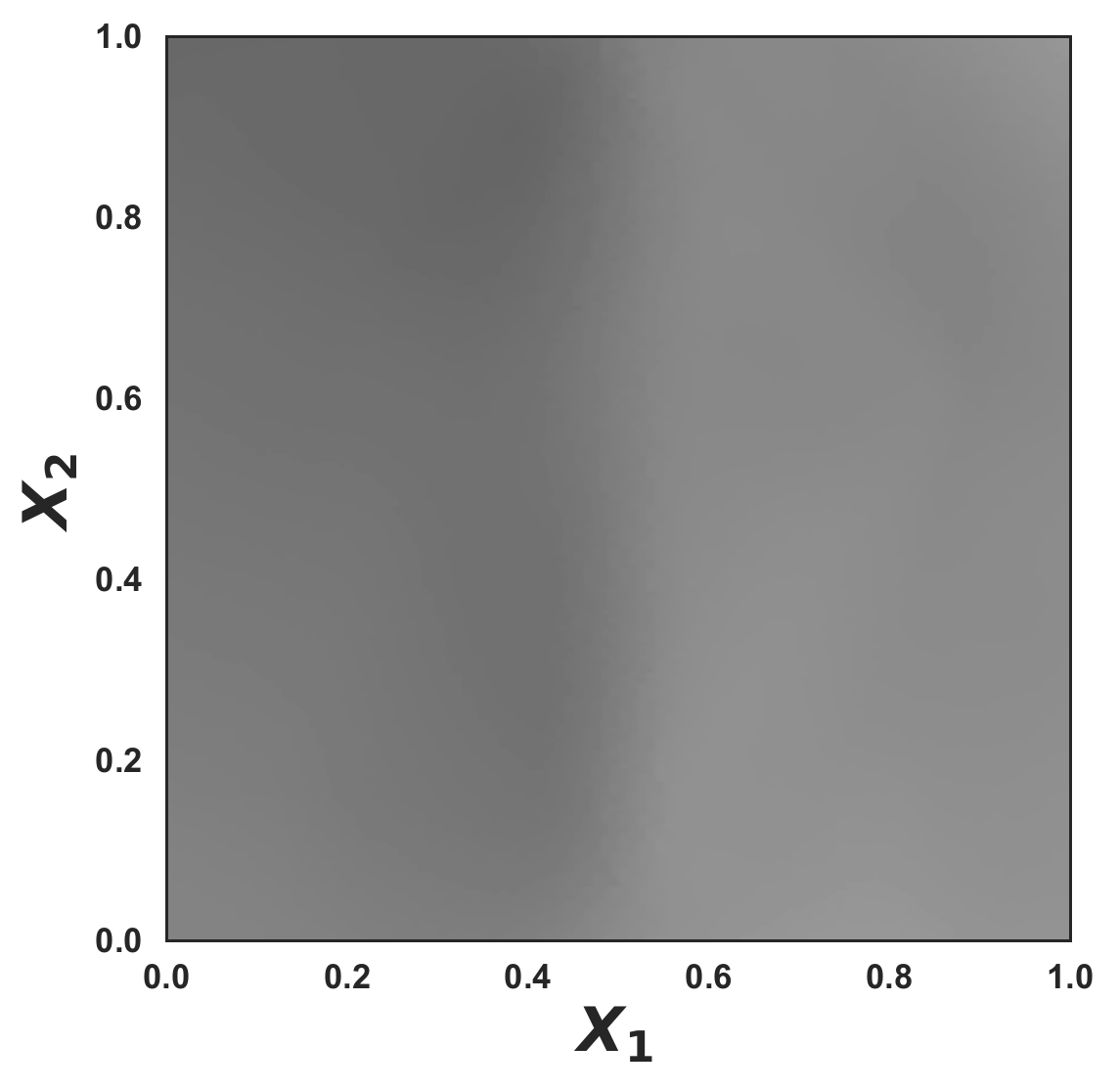}
	\end{subfigure}
	\begin{subfigure}{0.33\textwidth}
		\caption{True $f_{\tau}^*$,  $\tau =0.50$}%
		\includegraphics[width=1.8in,height=1.7in]{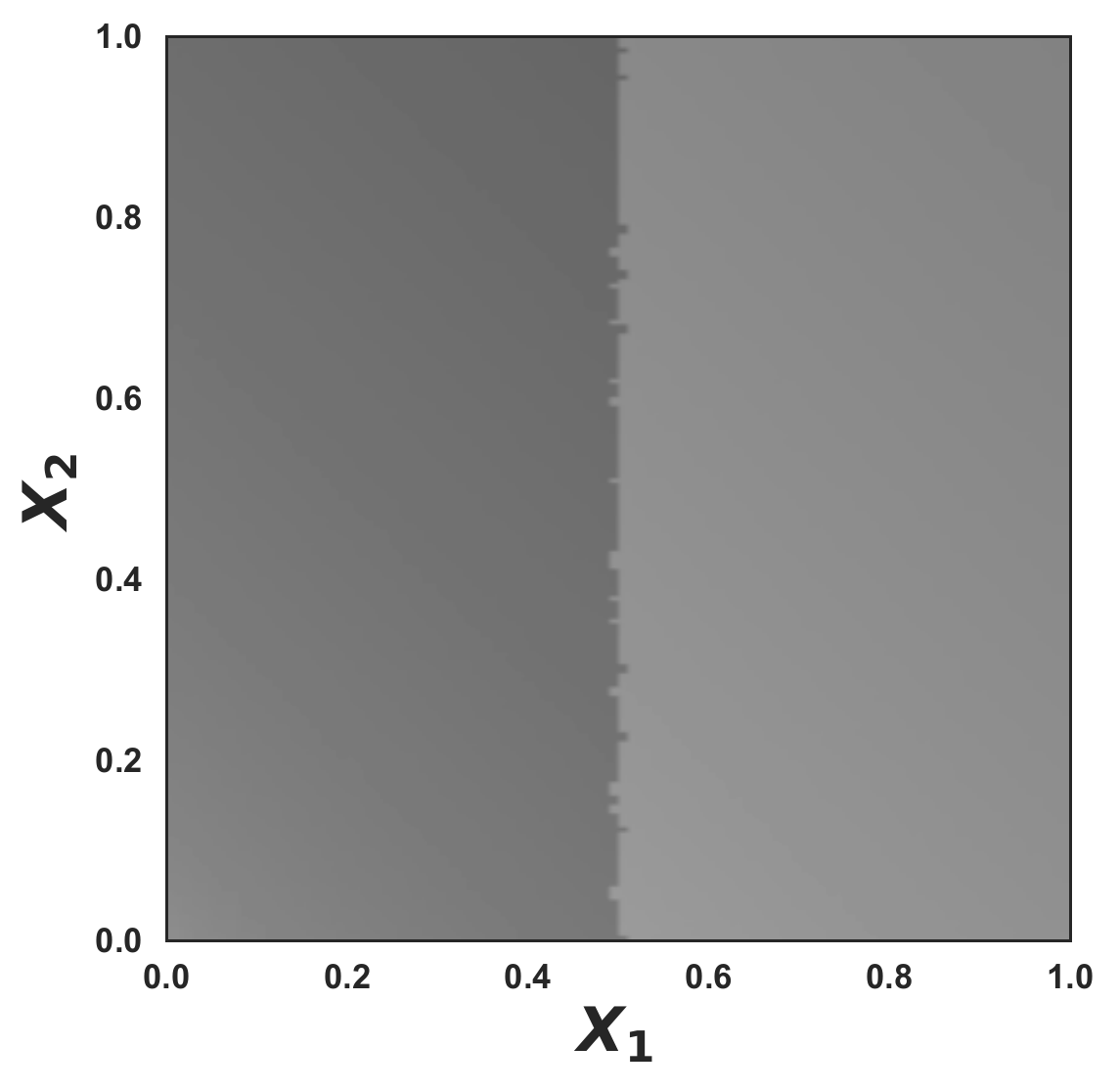}
	\end{subfigure}\\
	\vspace{0.1in}\\
	\begin{subfigure}{.33\textwidth}
		\caption{Quantile Spline, $\tau =0.50$}
		\includegraphics[width=2in,height=1.7in]{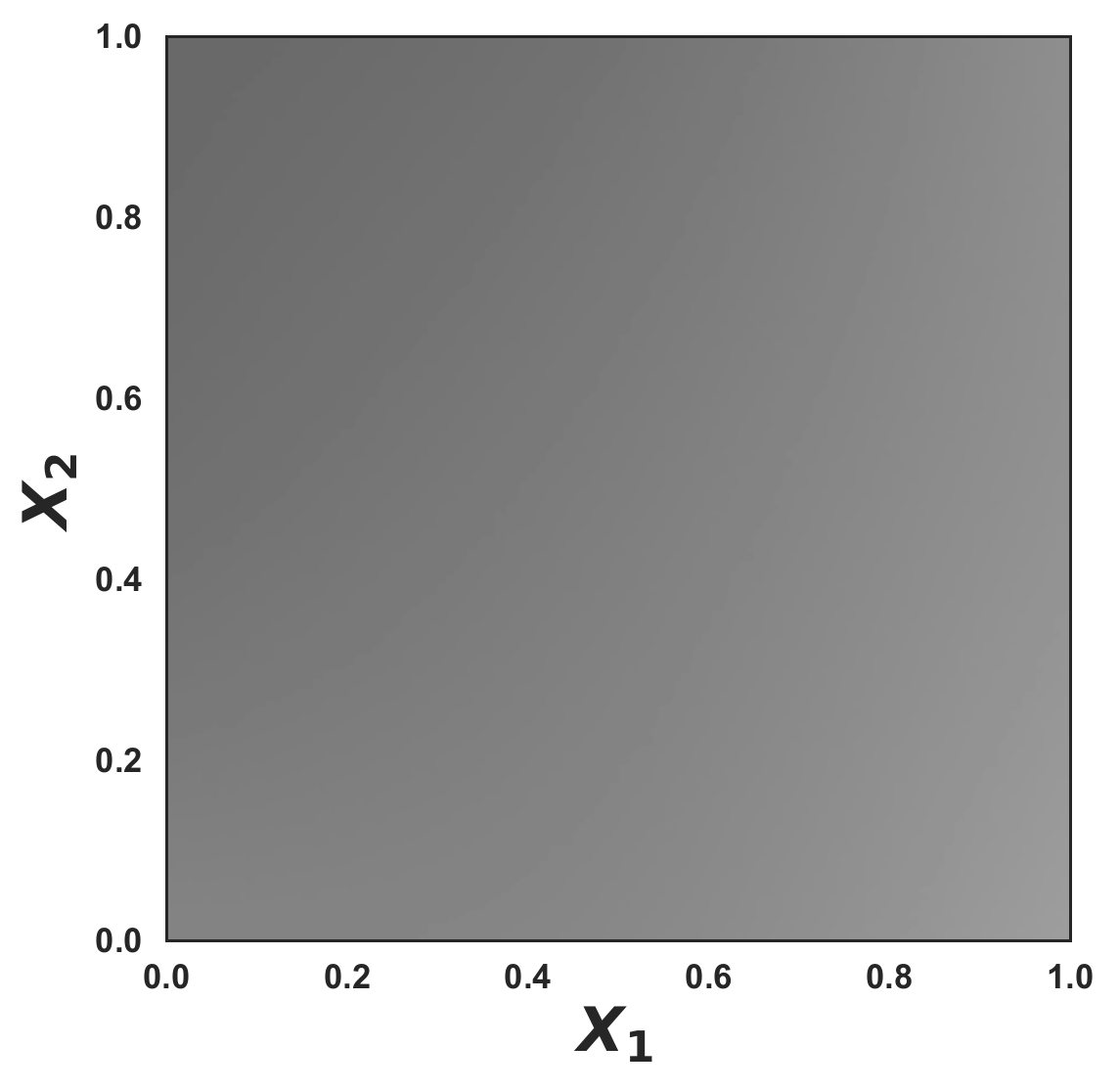}
	\end{subfigure}
	\begin{subfigure}{.33\textwidth}
		\caption{SqErr Network, $\tau =0.50$}
		\includegraphics[width=1.8in,height=1.7in]{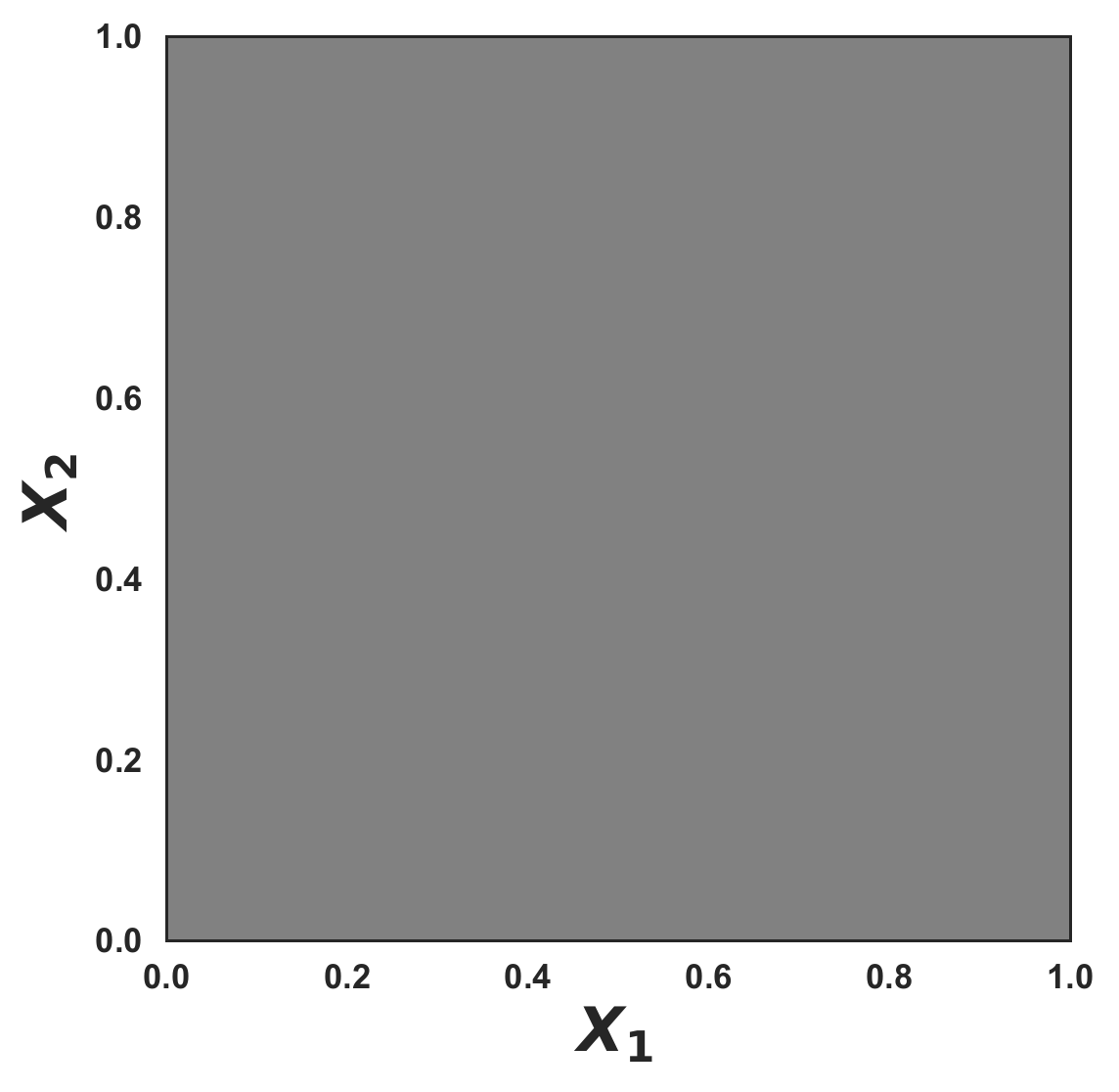}
	\end{subfigure}
	\begin{subfigure}{.33\textwidth}
		\caption{Quantile Network, $\tau=0.75$}
		\includegraphics[width=1.8in,height=1.7in]{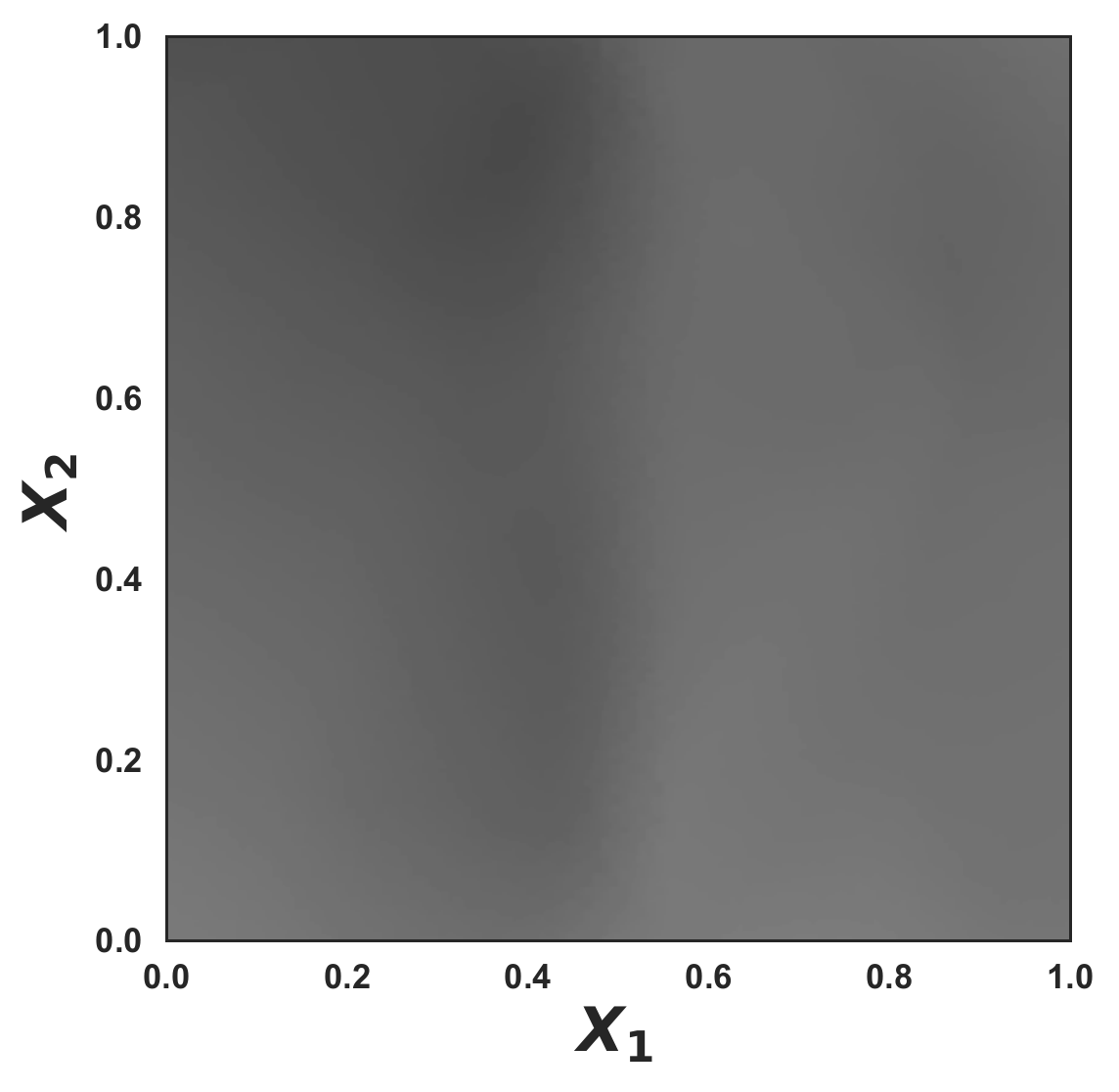}
	\end{subfigure}\\
	\begin{subfigure}{.33\textwidth}
		\caption{Quantile Forests, $\tau =0.75$.}
		\includegraphics[width=2in,height=1.7in]{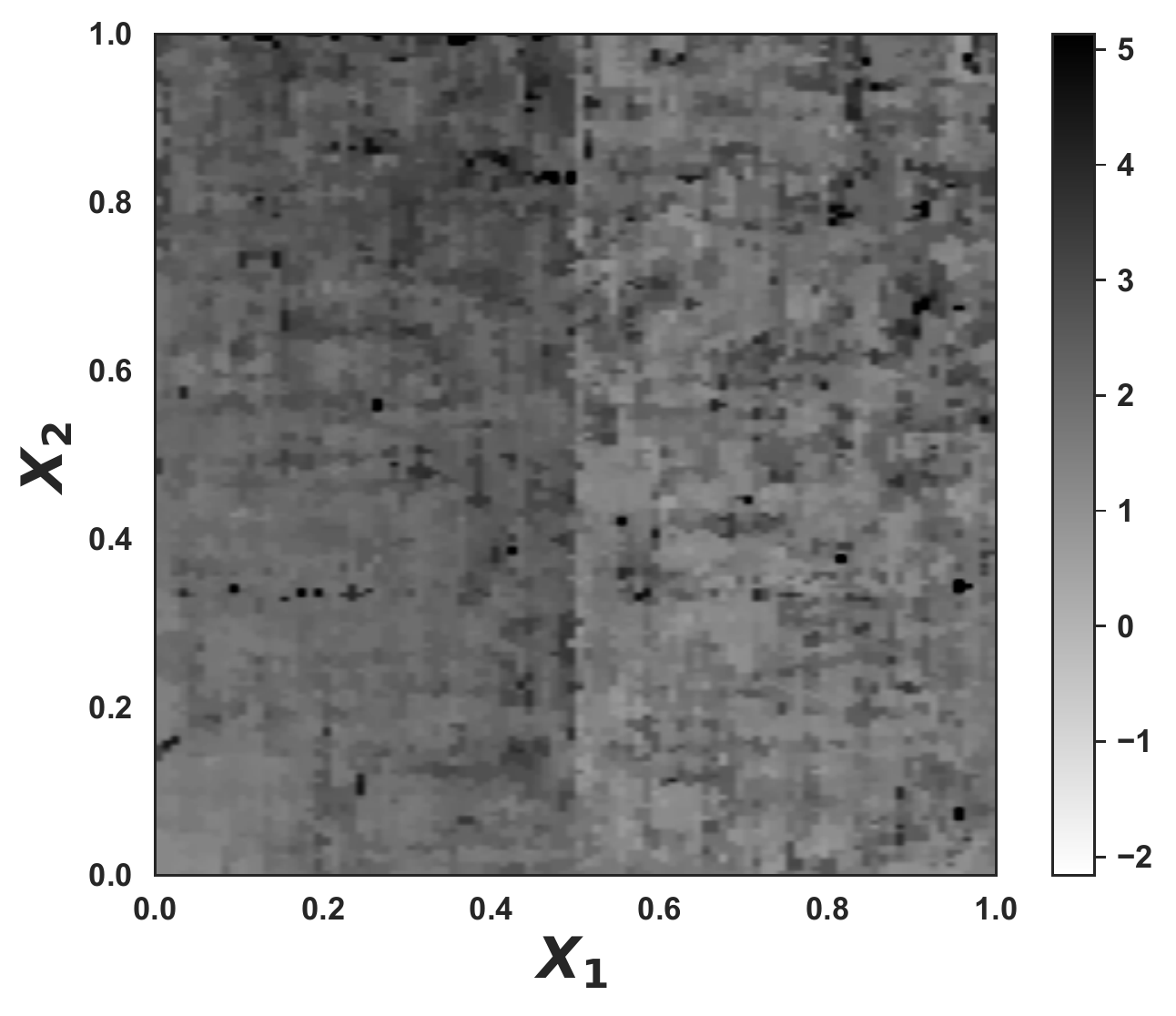}
	\end{subfigure}
	\begin{subfigure}{.33\textwidth}
		\caption{Quantile Spline, $\tau =0.75$.}
		\includegraphics[width=	1.8in,height=1.7in]{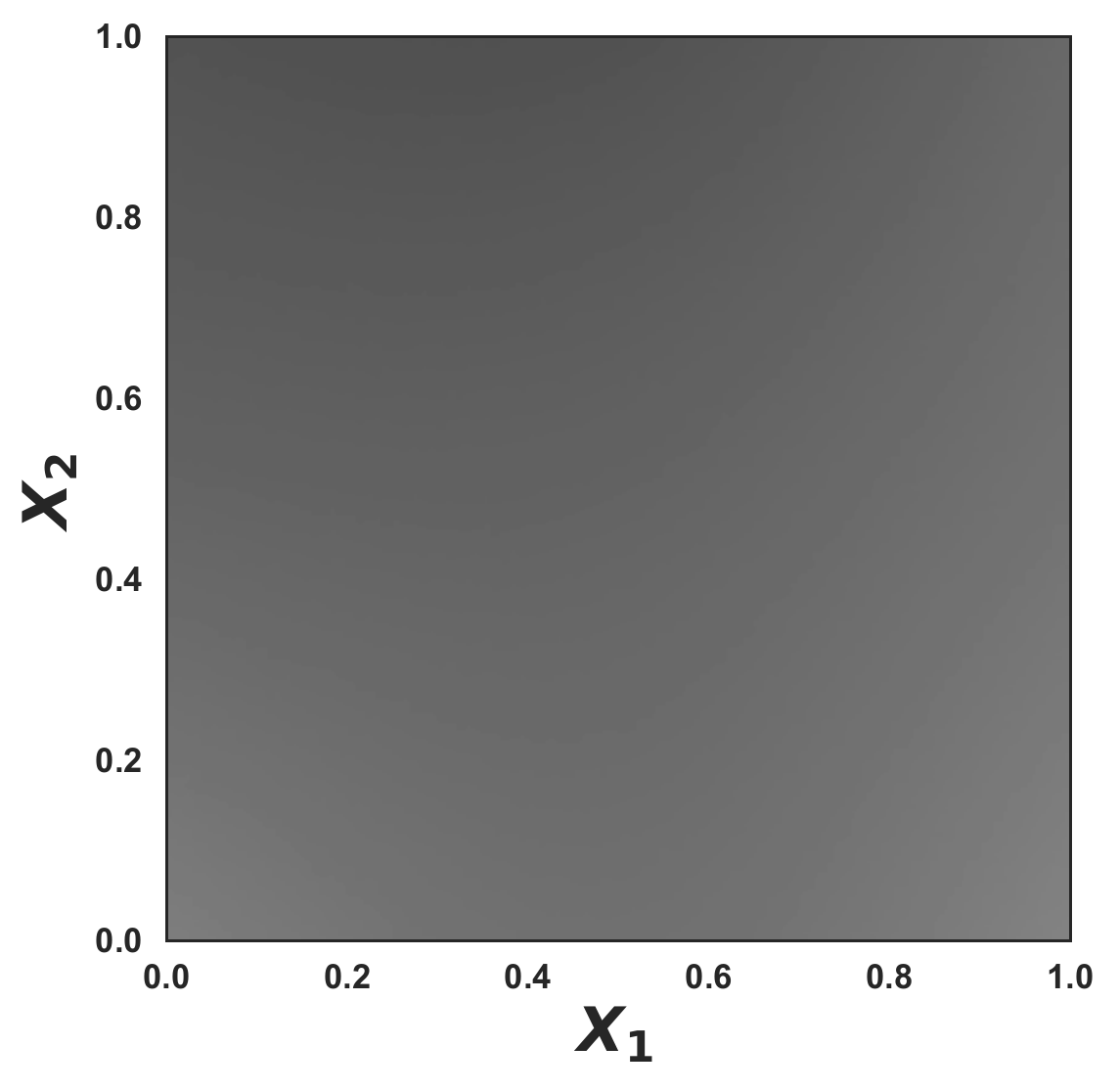}
	\end{subfigure}
	\begin{subfigure}{0.33\textwidth}
		\caption{True $f_{\tau}^*$, $\tau =0.75$}%
		\includegraphics[width=1.8in,height=1.7in]{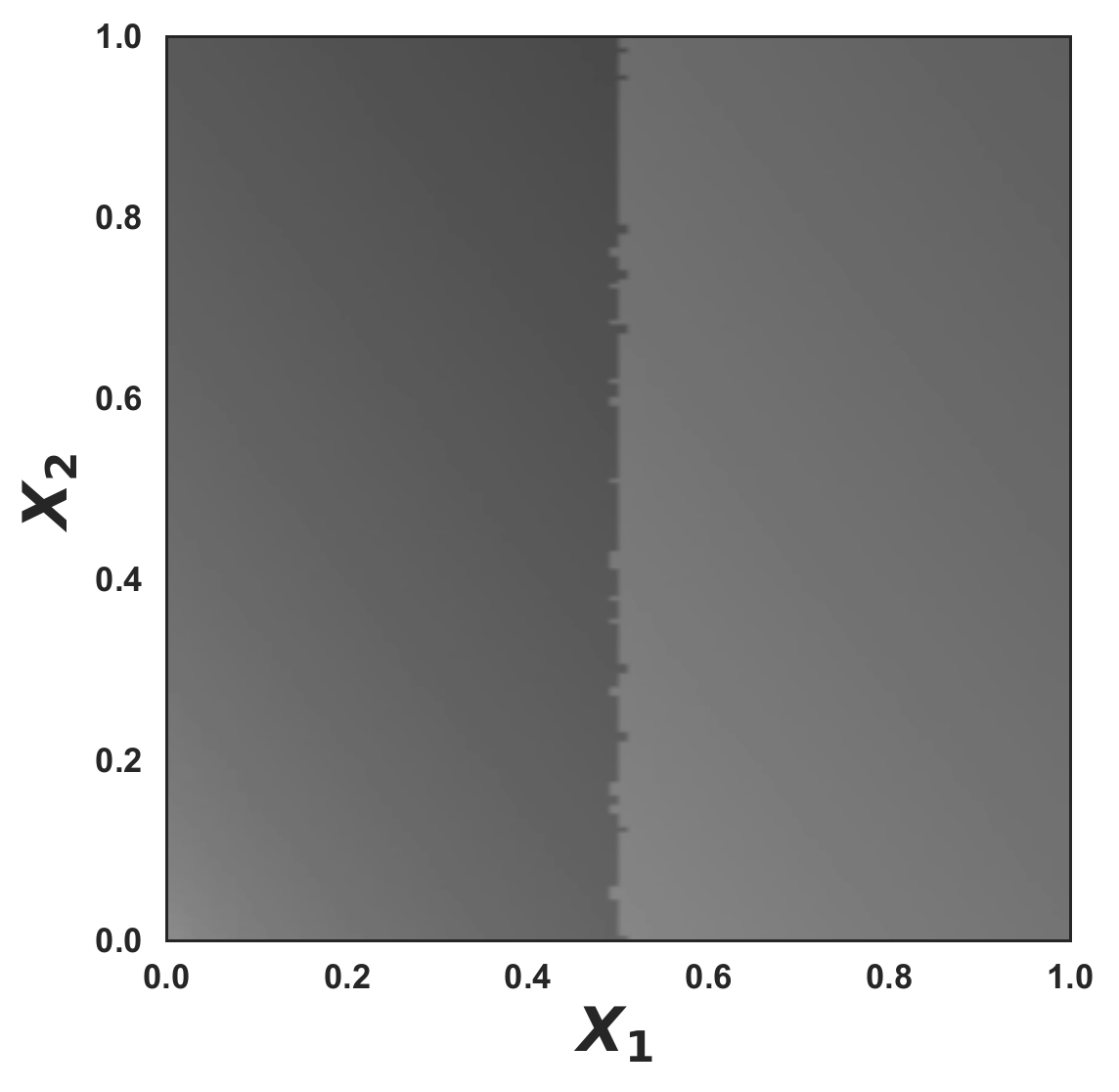}
	\end{subfigure}\\
	
	\caption{\label{fig3}One instance of the true quantile function with $\tau\in \{0.50,0.75\}$  and  of the corresponding estimates  obtained with the different methods. Here $n=10000$ and the data are generated under Scenario 3.}
\end{figure}


\begin{table}[t!]
	\centering
	\begin{small}
		\begin{tabular}{llccccc}
			\toprule
			\multicolumn{7}{c}{\textbf{Scenario 1}} \\
			\midrule
			$n$ & Method & $\tau=0.05$ & $\tau=0.25$ & $\tau=0.50$ & $\tau=0.75$ & $\tau=0.95$ \\
			\midrule
			\multirow{4}{*}{$100$} 
			& SqErr Network        & * & * & 0.98 & * & * \\
			& Quantile Network     & 0.60 & 0.50 & 0.46 & 0.68 & 1.85 \\
			& Quantile Spline      & 1.79 & 0.19 & 0.14 & 0.17 & 3.36 \\
			& Quantile Forests      & 1.69 & 0.54 & 0.34 & 0.69 & 2.37 \\         			
			\midrule
			\multirow{4}{*}{$1000$} 
			& SqErr Network        & * & *& 0.95 & * & * \\
			& Quantile Network     & 0.36 & 0.06 & 0.04 & 0.06 & 0.18 \\
			& Quantile Spline      & 0.17 & 0.07 & 0.06 & 0.07 & 0.17 \\
			& Quantile Forests      & 1.76 & 0.20 & 0.07 & 0.22 & 1.77 \\
			
			\midrule
			\multirow{4}{*}{$10000$} 
			& SqErr Network        & * & * & 0.94 & * & * \\
			& Quantile Network     & 0.04 & 0.01 & 0.01 & 0.01 & 0.03 \\
			& Quantile Spline      & 0.07 & 0.06 & 0.06 & 0.06 & 0.07 \\
			& Quantile Forests      & 2.90 & 0.16 & 0.08 & 0.67 & 13.89 \\
			\midrule
			\multicolumn{7}{c}{\textbf{Scenario 2}} \\
			\midrule
			$n$ & Method & $\tau=0.05$ & $\tau=0.25$ & $\tau=0.50$ & $\tau=0.75$ & $\tau=0.95$ \\
			\midrule
			\multirow{4}{*}{$100$} 
			& SqErr Network        & * & * & 0.28 & * & * \\
			& Quantile Network     & 3.82 & 2.37 & 2.21 & 2.64 & 4.43 \\
			& Quantile Spline      & 5.51 & 1.28 & 0.75 & 1.28 & 7.27 \\
			& Quantile Forests      & 4.13 & 1.68 & 1.10 & 1.66 & 4.42 \\
			\midrule
			\multirow{4}{*}{$1000$}
			& SqErr Network        & * & * & 0.26 & * &* \\
			& Quantile Network     & 0.41 & 0.23 & 0.21 & 0.24 & 0.48 \\
			& Quantile Spline      & 0.72 & 0.11 & 0.04 & 0.11 & 0.53 \\
			& Quantile Forests      & 4.08 & 1.24 & 0.68 & 1.18 & 3.62 \\
			\midrule
			\multirow{4}{*}{$10000$} 
			& SqErr Network        & *& * & 0.19 & * & *\\
			& Quantile Network     & 0.07 & 0.03 & 0.03 & 0.04 & 0.08 \\
			& Quantile Spline      & 0.06 & 0.01 & 0.00 & 0.01 & 0.06 \\
			& Quantile Forests      & 3.47 & 1.03 & 0.56 & 1.04 & 3.55 \\
			\midrule
			\multicolumn{7}{c}{\textbf{Scenario 3}} \\
			\midrule
			$n$ & Method & $\tau=0.05$ & $\tau=0.25$ & $\tau=0.50$ & $\tau=0.75$ & $\tau=0.95$ \\
			\midrule
			\multirow{4}{*}{$100$}
			& SqErr Network        & * & * & 0.24 & *& * \\
			& Quantile Network     & 1.33 & 0.80 & 0.93 & 1.57 & 2.83 \\
			& Quantile Spline      & 6.87 & 0.39 & 0.30 & 0.43 & 4.75 \\
			& Quantile Forests      & 5.49 & 0.81 & 0.37 & 0.81 & 4.42 \\
			\midrule
			\multirow{4}{*}{$1000$}
			& SqErr Network        & *& * & 0.25 & *& * \\
			& Quantile Network     & 0.21 & 0.09 & 0.08 & 0.09 & 0.32 \\
			& Quantile Spline      & 0.50 & 0.10 & 0.08 & 0.09 & 0.53 \\
			& Quantile Forests      & 11.26 & 0.93 & 0.20 & 0.77 & 9.96 \\
			\midrule
			\multirow{4}{*}{$10000$} 
			& SqErr Network        & * & * & 0.17 & * & * \\
			& Quantile Network     & 0.05 & 0.03 & 0.03 & 0.03 & 0.07 \\
			& Quantile Spline      & 0.09 & 0.07 & 0.07 & 0.07 & 0.11 \\
			& Quantile Forests      & 17.75 & 0.67 & 0.15 & 0.49 & 12.48 \\
			\bottomrule

		\end{tabular}
	\end{small}
	\caption{\label{tab1} Univariate Responses Tasks. Performances of different methods in Scenarios 1 -- 3, in terms of squared error from the true quantile averaged over 25 independent trials.}
\end{table}

\begin{table}[t!]
	\centering
	\begin{small}
		\begin{tabular}{llccccc}
			\multicolumn{7}{c}{\textbf{Scenario 4}} \\
			\midrule
			n & Method & $\tau=0.05$ & $\tau=0.25$ & $\tau=0.50$ & $\tau=0.75$ & $\tau=0.95$ \\
			\midrule
			\multirow{4}{*}{$100$} 
			& SqErr Network        & *& * & 0.30 & *&* \\
			& Quantile Network     & 5.96 & 3.06 & 3.44 & 4.02 & 5.13 \\
			& Quantile Spline      & * & *& * & * & * \\
			& Quantile Forests      & 3.65 & 1.85 & 1.11 & 1.60 & 3.55 \\
			\midrule
			\multirow{4}{*}{$1000$} 
			& SqErr Network        & * & *& 0.13 & * & *\\
			& Quantile Network     & 1.09 & 0.54 & 0.44 & 0.58 & 1.33 \\
			& Quantile Spline      & * & *& *&*& * \\
			& Quantile Forest      & 2.53 & 0.76 & 0.37 & 0.82 & 2.69 \\                   
			\midrule
			\multirow{4}{*}{$10000$} 
			& SqErr Network        & *& * & 0.05 & * &* \\
			& Quantile Network     & 0.20 & 0.09 & 0.07 & 0.10 & 0.25 \\
			& Quantile Spline      &* &* & *& *& * \\
			& Quantile Forest      & 1.71 & 0.42 & 0.18 & 0.42 & 1.71 \\
			\midrule
			\multicolumn{7}{c}{\textbf{Scenario 5}} \\
			\midrule
			$n$ & Model & $\tau=0.05$ & $\tau=0.25$ & $\tau=0.50$ & $\tau=0.75$ & $\tau=0.95$ \\
			\midrule
			\multirow{4}{*}{$100$} 
			& SqErr Network        & * & *& 30.59 & * & * \\
			& Quantile Network     & 5.05 & 4.37 & 3.61 & 4.26 & 6.96 \\
			& Quantile Spline      & * & *& * & *& *\\
			& Quantile Forest      & 41.86 & 21.94 & 15.36 & 20.69 & 53.68 \\
			\midrule
			\multirow{4}{*}{$1000$} 
			& SqErr Network        & 37.42 & 31.85 & 30.97 & 31.26 & 35.60 \\
			& Quantile Network     & 1.61 & 1.15 & 0.65 & 0.95 & 2.05 \\
			& Quantile Spline      & * & * & * & * & * \\
			& Quantile Forest      & 29.01 & 12.53 & 7.33 & 11.05 & 33.29 \\
			\midrule
			\multirow{4}{*}{$10000$} 
			& SqErr Network        & 35.97 & 31.13 & 30.60 & 31.24 & 36.31 \\
			& Quantile Network     & 0.28 & 0.22 & 0.17 & 0.24 & 0.39 \\
			& Quantile Spline      & * & * & * &* & * \\
			& Quantile Forest      & 17.28 & 7.03 & 3.70 & 5.77 & 18.50 \\
			\bottomrule
		\end{tabular}
	\end{small}
	\caption{\label{tab2} Univariate Responses Tasks: Performance of different methods in Scenarios 4 and 5, in terms of squared error from the true quantile averaged over 25 independent trials.}
\end{table}

We visualize the performances different approaches and true quantile functions in Figures \ref{fig2}--\ref{fig3}. There, we can see that  Quantile Network is a better estimate for the quantile functions in general, compared to the other methods.

We report the results for Scenarios 1-3 in Table \ref{tab1} and Scenarios 4-5 in Table \ref{tab2}. From Table \ref{tab1}  we can see that in Scenario 1,  the Quantile Network method outperforms the competitors for  most  quantiles and  sample sizes. The advantage becomes more evident as the sample size grows. The closest competitor is Quantile Spline  which  is the best method in some small sample problems.  Furthermore, in Scenario 2  the best  method is Quantile Spline, with Quantile Network as second best. This is not surprising since  Scenario 2  consists of very smooth quantile functions defined in a low dimensional domain ($d=2$).  In contrast, Scenario 3 consists of  a  quantile function with discontinuities and heteroscedastic errors. In this more challenging setting, Quantile Network outperforms others for larger values of $n$.


We do not compare against Quantile Splines in Scenarios 4--5  as such method does not scale up to 5 dimensional problems or above. In Table \ref{tab2}, we show that for  Scenarios 4--5   the clear best method is  Quantile Network (for $n>100$), with Quantile Forests as the second best. 



Overall, the results in Tables \ref{tab1}-\ref{tab2} demonstrate a clear advantage of the Quantile Network method. This method generally outperforms SqErr Network in all examples, presumably due to the heavy-tailed  or heteroscedastic error distributions. At the same time,  Quantile Network  also outperforms the other competitors with larger sample size or more complicated quantile functions.

\subsection{Multivariate response}  

We explore the performance of different quantile ReLU network approaches for multivariate responses as discussed in Section \ref{sub_sec:multivariate}.  We refer to  the estimator in equation (\ref{eqn:robust}) as Geometric Quantile, and the estimator in equation (\ref{eqn:qunatiles2}) as Quantile Network. As a benchmark, once again, we consider the estimator based on the squared error loss and as defined in equation (\ref{eqn:ls}). For all these estimators, the corresponding  network class is chosen as in \cref{sec:uni_simulations}.

We  conduct simulations in two different scenarios. In each scenario, we evaluate performance based on mean squared error defined as
\[
\frac{1}{np}\sum_{j=1}^{p}\sum_{i=1}^n  \left(    f_{\tau,j }^*(x_i) -  \hat{f}(x_i)\right)^2,
\]
where  the quantile functions   $  f_{\tau,j }^*(\cdot)$, $j=1,\ldots,p$, are defined in (\ref{eqn:marginal}), and $\tau= 0.5$.

We consider two multivariate response generative models as the follows. 

\paragraph{Scenario 6. }    
\[
\begin{array}{lll}
y_i &= &   g_2 \circ g_1 (x_i)   + \epsilon_i\\
g_1(q)   &  = &  (\vert q_1\vert,q_2 \cdot q_1  )^{\top} \\
g_2(q) & =& (   \sqrt{q_2^2 + q_1 },  (q_1+q_2)^3 )^{\top}\\
x_i  & \overset{\text{ind}}{\sim} &     U[0,1]^2,   \,\,\, i = 1,\ldots,n, \\
\epsilon_i  &\overset{\text{ind}}{\sim} &  Mt_3(0,  I_2),  \,\,\, i = 1,\ldots,n, 
\end{array}
\]
where $Mt_3(0,  I_2)$  is the multivariate $t$-distribution with  $3$ degrees of freedom  and scale matrix identity  $I_2 \in \mathbb{ R}^{2\times 2}$.

\paragraph{Scenario 7. }   
\[
\begin{array}{lll}
y_i &= &   f_0(x_i)   + \epsilon_i\\
f_0(q)   &  = &  (  \sqrt{q_1^2+ q_2^2},\sqrt{q_3^2+q_4^2}    )^{\top}\\
x_i  &\overset{\text{ind}}{\sim}&     U[0,1]^4,\\
\epsilon_{i,j}  & \overset{\text{ind}}{\sim} &  \mathrm{Laplace}(0,2).
\end{array}
\]

\begin{table}[t!]
	\centering
	\begin{small}
		\begin{tabular}{llc}
			\toprule
			\multicolumn{3}{c}{\textbf{Scenario 6}} \\
			\midrule
			n & Method & $\tau=0.50$ \\
			\midrule
			\multirow{3}{*}{$100$} & SqErr Network                       & 0.92 \\
			& Quantile Network                    & 0.49 \\
			& Geometric Quantile        & 0.54 \\
			
			\midrule
			\multirow{3}{*}{$1000$} & SqErr Network                       & 0.91 \\
			& Quantile Network                    & 0.06 \\
			&Geometric Quantile        & 0.07 \\
			\midrule
			\multirow{3}{*}{$10000$} & SqErr Network                       & 0.89 \\
			& Quantile Network                    & 0.01 \\
			& Geometric Quantile        & 0.01 \\
			
			\midrule
			\multicolumn{3}{c}{\textbf{Scenario 7}} \\
			\midrule
			n & Method & $\tau=0.50$ \\
			\midrule
			\multirow{4}{*}{$100$} & SqErr Network                       & 0.24 \\
			& Quantile Network                    & 1.67 \\
			& Geometric Quantile       & 2.36 \\
			
			\midrule
			\multirow{4}{*}{$1000$} & SqErr Network                       & 0.11 \\
			& Quantile Network                    & 0.23 \\
			& Geometric Quantile       & 0.14 \\
			
			\midrule
			\multirow{4}{*}{$10000$} & SqErr Network                       & 0.09 \\
			& Quantile Network                    & 0.03 \\
			& Geometric Quantile      & 0.03 \\

			\bottomrule
		\end{tabular}
	\end{small}
	\caption{\label{tab3} Multivariate Responses Tasks: Performance of different methods in Scenarios 6 and 7.}
	\small
	Note: We measure performance using the averaged mean squared error based on 25 Monte Carlo simulations for the two synthetic multivariate benchmarks.
\end{table}

Table \ref{tab3}  illustrates the performance of different methods in Scenarios 6 and 7 with sample sizes $n=\{100,1000,10000\}$. We can see that both  Quantile Network and Geometric  Quantile 
outperform the  $\ell_2$-based approach SqErr Network. This corroborates the results earlier in Section \ref{sec:uni_simulations}. Particularly, it demonstrates that both Quantile Network and Geometric Quantile are robust estimators when dealing with heavy-tailed distributions.

\section{Conclusion and Future Work}	
	
	In this paper  we have  studied, both theoretically and empirically, the  statistical performance   of ReLU networks for  quantile regression. Our main theorems  establish  minimax estimation rates under general classes of functions and distributions of the errors. These results 
rely on the approximation theory from \cite{schmidt2017nonparametric} and \cite{suzuki2018adaptivity}. Future work can extend these results to other function classes provided that  the corresponding  approximation theory is used or developed. 
Empirically, experiments in both univariate and multivariate  response quantile regression with ReLU networks show an advantage over other  quantile regression methods. Quantile regression networks were also shown to outperform $\ell_2$-based regression with the same neural network architecture when the error distribution is heavy-tailed. In the case of multivariate responses, quantile networks were also shown to perform well in benchmarks. A theoretical study of statistical rates of convergence for multivariate response neural quantile regression is left for future work.

		
		
			\appendix
\section{Notation}

For  an $\epsilon>0$ and  a metric  $\mathrm{dist}(\cdot,\cdot)$ on the class of functions  $\mathcal{F}$, we define the covering number $\mathrm{N}(\epsilon,\mathcal{F},\mathrm{dist}(\cdot,\cdot))$ as the minimum number of  balls of the form $\{ g \,:\,  \mathrm{dist}(g,f)\leq \epsilon  \}$,  with $f \in \mathcal{F}$,  needed to  cover  $\mathcal{F}$.

We also write
\[
\text{B}(f, \|\cdot\|_{\ell_2},   r ) \,=\,   \,\{   g   \,:\,    \|f-g\|_{\ell_2}  \leq r      \}.
\]
Furrthermore,  if  $a_n$  and $b_n$ are positive sequences, we say that  $a_n \lesssim  b_n$ if there exists $m$ such that $n\geq m$ implies  $a_n \leq  c b_n$ for a constant $c>0$.

\section{Theorem  \ref{thm:risk} }

Througouth this   section we write $\mathcal{F}$ to refer to $\mathcal{F}(W,U,L)$.

\subsection{Auxiliary results }

Before  stating our first result we first state some definitions and an auxiliary lemma.

\begin{definition}
	\label{def1.2}
	We define the empirical loss function \[
	\displaystyle	\hat{M}_n(\theta) =    \sum_{i=1}^{n} \hat{M}_{n,i}(\theta),
	\]
	where
	\[
	\hat{M}_{n,i}(f) = \frac{1}{n}  \left(\rho_{\tau}(y_i -  f(x_i)    )    -  \rho_{\tau}(y_i -   f_n(x_i)   )\right),
	\]
	with 
	\[
	f_n  \,\in \,  \underset{f \in \mathcal{F}   }{\arg \min}\,\,\,   \mathbb{E }\left[  \frac{1}{n} \sum_{i=1}^{n}   \rho_{\tau}(y_i -  f(x_i)    )     -   \frac{1}{n} \sum_{i=1}^{n}  \rho_{\tau}(y_i -  f_{\tau}^*(x_i)    )  \right].
	\]
	We also set
	$$M_{n,i}(f) =   \frac{1}{n} \mathbb{E }[\rho_{\tau}(z_i -  f(x_i)   )   -  \rho_{\tau}(z_i -  f_n(x_i)   ) ],$$ 
	where  $z \in \mathbb{ R}^n$ is an independent  copy of  $y$.  
\end{definition}

\begin{lemma}
	\label{lem1}
	Suppose  that Assumption \ref{as1}--\ref{as2} hold. Then there exists  a constant   $c_{\tau}$ such that  for all  $\delta \in \mathbb{ R}^n$, we have
	\[
	\displaystyle    \sum_{i=1}^{n } \mathbb{E}\left[\rho_{\tau}(z_i -  f_{\tau}^*(x_i)   -\delta_i ) \right]  -  \sum_{i=1}^{n } \mathbb{E}\left[\rho_{\tau}(z_i -  f_{\tau}^*(x_i)    ) \right]       \geq  c_{\tau}  n \Delta_n^2(\delta),
	\]
	where $z \in \mathbb{ R}^n$ is an independent  copy of  $y$.
\end{lemma}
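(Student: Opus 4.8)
The plan is to reduce the claim to a deterministic, coordinatewise curvature inequality for the expected check loss, and then read that inequality off from the local density lower bound of Assumption~\ref{as2}. Since the design is fixed in this subsection and $\delta\in\R^n$ is a fixed vector, it suffices to prove that for each $i$
\[
g_i(\delta_i) \,:=\, \E\big[\rho_\tau(z_i - f_\tau^*(x_i) - \delta_i)\big] - \E\big[\rho_\tau(z_i - f_\tau^*(x_i))\big] \,\ge\, c_\tau\, D^2(\delta_i),
\]
with $c_\tau>0$ not depending on $i$ or $\delta_i$; summing this over $i=1,\dots,n$ then yields the lemma because $\sum_{i=1}^n D^2(\delta_i)=n\,\Delta_n^2(\delta)$.

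To analyze $g_i$ I would invoke Knight's identity for the check function $\rho_\tau(x)=x(\tau-\one\{x<0\})$: for all $u,v\in\R$,
\[
\rho_\tau(u-v)-\rho_\tau(u) \,=\, v\big(\one\{u\le 0\}-\tau\big) + \int_0^v\big(\one\{u\le s\}-\one\{u\le 0\}\big)\,ds,
\]
applied with $u=z_i-f_\tau^*(x_i)$ and $v=\delta_i$. Taking the expectation over $z_i$ (which, conditional on $x_i$, has law $F_{y_i|x_i}$) and interchanging expectation and integral by Fubini,
\[
g_i(\delta_i) \,=\, \delta_i\big(F_{y_i|x_i}(f_\tau^*(x_i))-\tau\big) + \int_0^{\delta_i}\big(F_{y_i|x_i}(f_\tau^*(x_i)+s)-F_{y_i|x_i}(f_\tau^*(x_i))\big)\,ds.
\]
Because Assumption~\ref{as2} gives a density for $y_i\mid x_i$ on a neighborhood of $f_\tau^*(x_i)$, the distribution function is continuous at $f_\tau^*(x_i)$, so $F_{y_i|x_i}(f_\tau^*(x_i))=\tau$ and the linear term drops out; monotonicity of $F_{y_i|x_i}$ then shows $g_i(\delta_i)=\int_0^{\delta_i}(F_{y_i|x_i}(f_\tau^*(x_i)+s)-\tau)\,ds\ge0$ for either sign of $\delta_i$. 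Moreover, whenever $|s|\le L$ the increment admits the representation $F_{y_i|x_i}(f_\tau^*(x_i)+s)-\tau=\int_0^s p_{y_i|x_i}(f_\tau^*(x_i)+r)\,dr$, which is exactly where the density lower bound enters.

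It remains to extract the $\min\{|t|,t^2\}$ shape. Assume $\delta_i>0$ ($\delta_i<0$ is symmetric, $\delta_i=0$ trivial). If $0<\delta_i\le L$, then $p_{y_i|x_i}(f_\tau^*(x_i)+r)\ge\underline p$ for $|r|\le L$ gives $g_i(\delta_i)\ge\underline p\int_0^{\delta_i}s\,ds=\tfrac{\underline p}{2}\delta_i^2$. If $\delta_i>L$, split $\int_0^{\delta_i}=\int_0^L+\int_L^{\delta_i}$: the first part is $\ge\tfrac{\underline p}{2}L^2$, while for $s\ge L$ monotonicity and the density bound give $F_{y_i|x_i}(f_\tau^*(x_i)+s)-\tau\ge\int_0^L p_{y_i|x_i}(f_\tau^*(x_i)+r)\,dr\ge\underline p L$, so the second part is $\ge\underline p L(\delta_i-L)$; adding and using $\delta_i>L$, $g_i(\delta_i)\ge\underline p L\,\delta_i-\tfrac{\underline p}{2}L^2\ge\tfrac{\underline p L}{2}\,\delta_i$. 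Since $D^2(\delta_i)=\min\{\delta_i,\delta_i^2\}$, which is $\delta_i^2$ for $\delta_i\le1$, is $\delta_i$ for $\delta_i\ge1$, and is always $\le\delta_i$, each of the two estimates yields $g_i(\delta_i)\ge c_\tau D^2(\delta_i)$ with $c_\tau:=\tfrac{\underline p}{2}\min\{1,L\}>0$. Summing over $i$ finishes the proof.

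The one step that needs genuine care — and the reason the loss is the truncated $D^2(t)=\min\{|t|,t^2\}$ rather than $t^2$ — is the large-$|\delta_i|$ regime: Assumption~\ref{as2} only controls the conditional density on the fixed window $[-L,L]$ about the quantile, so outside that window one can only propagate a lower bound that is linear in $|\delta_i|$, which is exactly what $D^2$ captures. Everything else is routine; note that only the lower-bound half of Assumption~\ref{as2} is used, and the argument mirrors standard curvature estimates for the check loss, e.g.\ those underlying Condition~2 of \cite{he1994convergence} and Assumption~A of \cite{padilla2020adaptive}.
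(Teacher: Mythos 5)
Your proof is correct: the Knight-identity decomposition, the cancellation of the linear term via $F_{y_i|x_i}(f_\tau^*(x_i))=\tau$, and the two-regime bound (quadratic on $|\delta_i|\le L$ via the density lower bound, linear beyond via monotonicity) together give exactly $g_i(\delta_i)\ge \tfrac{\underline p}{2}\min\{1,L\}\,D^2(\delta_i)$, which sums to the claim. The paper itself gives no argument here, deferring entirely to Lemma~8 of \cite{padilla2020adaptive}; your write-up is the standard self-calibration derivation underlying that cited result, so in substance it is the same approach, just made self-contained.
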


\begin{proof}
	See Lemma 8 in \cite{padilla2020adaptive}.
\end{proof}

\begin{definition}
	\label{def2}
	Let $\mathcal{H}$ be   a class of
	functions from $\mathcal{X}$ to $\mathbb{ R}$. We  define the  pseudodimension of  $\mathcal{H}$, denoted as  $\mathrm{Pdim}(\mathcal{H})$, as  the  largest integer  $m$ for  which there exist   $(a_1,b_1)\ldots,(a_m,b_m) \in \mathcal{X} \times \mathbb{ R}$   such  that for all  $\eta \in \{0,1\}^m$ there exists  $f \in \mathcal{H}$  such that
	\[
	f(a_i)  >  b_i\,\,\,\,\iff\,\,\,\,    \eta_i,
	\]
	for  $i=1 ,\ldots,m$.
\end{definition}

\begin{theorem}[Theorem  7 from  \cite{bartlett2019nearly}]
	\label{thm2}
	With the notation from before, we have that
	\[
	\mathrm{Pdim}\left(\mathcal{F}(W,U,L)  \right)	  = O(LW \log( U)    ).
	\]
\end{theorem}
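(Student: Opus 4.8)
The plan is to prove this combinatorially, following the piecewise-polynomial partitioning strategy of \cite{bartlett2019nearly}; since the statement is cited as an external result, I sketch the mechanism that drives it. First I would reduce the pseudodimension to a sign-counting problem. Recall that $\mathrm{Pdim}(\mathcal{F}(W,U,L))$ equals the VC dimension of the subgraph collection $\{(x,b)\mapsto \sign(f_\theta(x)-b) : \theta \in \R^W\}$, so it suffices to bound, for $m$ fixed pairs $(x_1,b_1),\ldots,(x_m,b_m)$, the number $K$ of distinct sign vectors $(\sign(f_\theta(x_j)-b_j))_{j=1}^m$ realizable as $\theta$ ranges over the $W$-dimensional parameter space. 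If the $m$ points are pseudo-shattered then $K=2^m$, so any bound $K\le \Phi(m)$ forces $2^m\le\Phi(m)$ and hence an upper bound on $m$.

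Second, I would exploit the fact that each $f_\theta \in \mathcal{F}(W,U,L)$ is piecewise polynomial in $\theta$. Fixing the $m$ inputs, I would build a partition of $\R^W$ layer by layer: on any cell where the ReLU activation pattern of layers $1,\ldots,i-1$ is constant, every ReLU acts as either the zero or the identity map, so the layer-$(i-1)$ output is an honest polynomial in $\theta$, and an induction on depth shows its degree is at most $i-1$, since each layer multiplies by one weight matrix and raises the degree by one. Consequently the $m k_i$ pre-activation functions feeding the $k_i$ units of layer $i$ (one per sample, per unit) are polynomials of degree at most $i$ in the $W_i$ parameters appearing in layers $1,\ldots,i$.

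Third, I would control how many new cells each layer creates using a Warren/Milnor--Thom--type estimate: a family of $M$ real polynomials of degree $\le d$ in $n$ variables realizes at most $2(2eMd/n)^n$ distinct sign patterns when $M\ge n$. Applying this inside each existing cell at layer $i$ with $M=mk_i$, $d=i$, $n=W_i$, multiplying across $i=1,\ldots,L$, and appending one final factor from comparing the degree-$(L+1)$ output to the thresholds $b_j$, I obtain
\[
K \;\le\; \prod_{i=1}^{L+1} 2\left(\frac{2e\,m\,k_i\,i}{W_i}\right)^{W_i}.
\]
Taking logarithms and bounding $W_i\le W$, $k_i\le U$, and $i\le L$ in each of the $L+1$ factors yields $\log_2 K \lesssim WL\log(mUL)$.

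Finally, combining with $2^m\le K$ gives the transcendental inequality $m \lesssim WL\log(mUL)$; solving it via the elementary fact that $m\le a\log_2(mb)$ forces $m=O(a\log(ab))$ absorbs the stray $\log m$ and produces $m = O(WL\log(WLU))$. Since a network with $U$ units has at most $U$ layers and at most $O(U^2)$ parameters, $\log(WLU)=O(\log U)$, so $\mathrm{Pdim}(\mathcal{F}(W,U,L))=O(WL\log U)$. The main obstacle is the layer-by-layer refinement of the third step: the tight rate requires charging each layer only the cumulative parameter count $W_i$ and the degree $i$ available at that depth, rather than using the global $W$ and $L+1$ everywhere, since naive accounting inflates the estimate by extra depth or $\log W$ factors and loses the claimed bound.
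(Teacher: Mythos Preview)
Your sketch is correct and faithfully reproduces the mechanism of the Bartlett--Harvey--Liaw--Mehrabian argument: reduce $\mathrm{Pdim}$ to sign-pattern counting, freeze the ReLU activation pattern layer by layer so that pre-activations become polynomials of degree $i$ in the cumulative $W_i$ parameters, apply the Warren-type bound inside each cell, multiply over layers, and then solve $m\lesssim WL\log(mUL)$ to extract $m=O(WL\log U)$. The only point worth tightening is the degree bookkeeping: on a fixed activation region the layer-$i$ output is multilinear in the weight matrices $A^{(0)},\ldots,A^{(i-1)}$, hence of total degree exactly $i$ in the parameters, which is what you use.

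There is nothing to compare against in the paper itself: the statement is invoked verbatim as Theorem~7 of \cite{bartlett2019nearly} and is not re-proved, so your proposal is in fact strictly more than what the paper provides. Your closing remark about needing the layer-wise cumulative counts $W_i$ rather than the global $W$ is exactly the refinement that distinguishes the $O(WL\log U)$ bound from the cruder $O(W^2)$ one would get from a one-shot application of Warren's lemma at the output.
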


\subsection{Proof of  Theorem  \ref{thm:risk} }

\begin{proof}
	Throughout this proof the  covariates  $x_1,\ldots,x_n$  are  fixed.  Let $\hat{\delta}_i =  \hat{f}(x_i) - f_{\tau}^*(x_i)$  for $i=1,\ldots,n$.
	Notice that
	\begin{equation}
		\label{eqn:first}
		\begin{array}{lll}
			\displaystyle 	    \mathbb{E } \left[ \frac{1}{n}\sum_{i=1}^n D^2\left\{  f_{\tau}^*(x_i) - \hat{f}(x_i)   \right\}\right]   & \leq& \displaystyle  \frac{1}{c_{\tau} n}\mathbb{E } \left( \sum_{i=1}^{n } \mathbb{E } \left[\rho_{\tau}\{z_i -  f_{\tau}^*(x_i)   -\hat{\delta}_i \} \right]  -  \sum_{i=1}^{n } \mathbb{E } \left[\rho_{\tau}\{z_i -  f_{\tau}^*(x_i)    \} \right]    \right)\\
			&=&  \displaystyle \frac{1}{c_{\tau}}  E\left\{  M_n(\hat{f})    \right\}   + \frac{1}{c_{\tau}}   \text{err}_1,
		\end{array}
	\end{equation}
	where the inequality follows from Lemma \ref{lem1}.
	
	Next	we proceed  to bound  $  E\{  M_n(\hat{f})  \}$. To that end, notice that for a constant  $C>0$,
	\[
	\begin{array}{lll}
		\mathbb{E } \left\{  M_n(\hat{f})    \right\}  & \leq &\displaystyle  4 \mathbb{E } \left\{ \underset{f\in   \mathcal{F}    }{\sup}\,    \frac{1}{n}\sum_{i=1}^{n}  \xi_i  f(x_i)   \right\} \\
		& \leq& \displaystyle   F \mathbb{E } \left\{ \underset{f \in   \mathcal{F}    }{\sup}\,    \frac{1}{n}\sum_{i=1}^{n}  \xi_i  \frac{f(x_i)}{F}   \right\} \\
		& \leq& \displaystyle \frac{C F}{  \sqrt{n} }\,\int_{0}^{2}   \sqrt{  \log  \mathrm{N}\left( \mu, \mathcal{F}/F, \|\cdot\|_n   \right)  } \,d\mu\\
		& \leq&  \displaystyle \frac{C F}{\sqrt{n} }\, \int_{0}^{2}   
		\sqrt{  \log\left(    \left(     \frac{  2 \cdot e\cdot n  }{\mu  \cdot \mathrm{Pdim}\left(\mathcal{F}  \right)   }    \right)^{  \mathrm{Pdim}\left(\mathcal{F}  \right) }\right)  }\,d\mu
		\\
		& \leq&  \displaystyle \tilde{C} F   \sqrt{  \frac{   LW \log  U  \cdot \log n  }{n}  }
	\end{array}
	\]
	for some constant  $\tilde{C}>0$,
	where the first inequality follows by simmetrization and  Talgrand's inequality (\cite{ledoux2013probability}) similarly to  Theorem 12 in \cite{padilla2020adaptive},  the  third inequality follows from  Dudley's theorem,  the  fourth  holds because of Lemma 4 in \cite{farrell2018deep}, and the last from Theorem \ref{thm2}.
	
\end{proof}





\section{Theorem \ref{thm3}   }

The proof is in the spirit of the proof  of Theorem 1 in \cite{farrell2018deep} combined with  results and ideas from \cite{padilla2020adaptive} and \cite{suzuki2018adaptivity}.
\subsection{Notation}

Througout we  let  $\mathcal{X} = [0,1]^d$.  For  $p > 0$ and  $f \,:\, \mathcal{X} \,\rightarrow\, \mathbb{R}$ we let 
\[
\displaystyle   \| f\|_{p} \,:=\,   \left(   \int_{\mathcal{X}}    (f(x))^p dx\right)^{1/p}
\]
and
\[
L^p(\mathcal{X}) \,=\,\left\{  f\,:\, f \,:\, \mathcal{X} \,\rightarrow\, \mathbb{R},\,\,\,\,\text{and}\,\,\,\, \| f\|_{p} <\infty    \right\}.
\]	

\begin{definition}
	\label{def3}
	For a  function $f \in L^p(\mathcal{X})$  and  $p \in (0,\infty]$  we define the $r$-modulus of continuity  as
	\[
	w_{r,p}(f,t) = \underset{\|   h\|_2  \leq  t    }{\sup}\,\|  \Delta_h^r(f)\|_p,
	\]
	with 
	\[
	\Delta_h^r(f) =   \begin{cases}
		\sum_{j=0}^{r}  \frac{r!}{j! \,(j-r)!}        (-1)^{r-j} f(x+hj)    &  \text{if}\,\,\,\,\,      x\in \mathcal{X},  \,\,\, x+rh \in \mathcal{X},\\
		0 & \text{otherwise}.
	\end{cases}
	\]
\end{definition}

\begin{definition}
	\label{def4}
	For   $0<p,q\leq \infty$,  $\alpha>0$,   $r =  \floor{\alpha} +1$, we define  the Besov space $B^{\alpha}_{p,q}(\mathcal{X})$ as
	\[
	B^{\alpha}_{p,q}(\mathcal{X})    =    \left\{   f \in  L^p(\mathcal{X})\,:\,      \|f\|_{ 	B^{\alpha}_{p,q}(\mathcal{X})    }    < \infty 	\right\},
	\]
	where 
	\[
	\|f\|_{ 	B^{\alpha}_{p,q}(\mathcal{X})    }   =  \|f\|_{ p  }  +      \vert f\vert_{ 	B^{\alpha}_{p,q}(\mathcal{X})    } , 
	\]
	with
	\[
	\vert f\vert_{ 	B^{\alpha}_{p,q}(\mathcal{X})    }     =      \begin{cases}
		\left(\int_{0}^{\infty}     (t^{-\alpha} w_{r,p} (f,t))^q t^{-1} dt\right)^{\frac{1}{q}}       &\,\,\,\text{if } \,\,\,\,  q<\infty,\\
		\underset{t>0}{\sup}  \,\,\, t^{-\alpha} w_{r,p}(f,t)  &\,\,\,\text{if } \,\,\,\,  q=\infty.
	\end{cases}
	\]
\end{definition}


Throughout we denote  by $f_n \in \mathcal{I}(L,W,S,B)$  a function satisfying 
\[
f_n \in      \underset{f \in \mathcal{I}(L,W,S,B) ,  \,  \|f\|_{\infty} \leq F  }{\arg\min}  \, \|f - f_{\tau}^*\|_{\infty}.
\]

We also write
\[
\tilde{\mathcal{I}}(L,W,S,B)  = \left\{f \in \mathcal{I}(L,W,S,B) \,:\,   \,  \|f\|_{\infty} \leq F \right\}.
\]

\subsection{Auxiliary lemmas}

\begin{lemma}
	\label{lem2}
	Suppose  that  $\|  f_n -f_{\tau}^*\|_{\infty} \leq  c$ for a small enough constant $c$.  With the  notation in (\ref{eqn:loss2}) we have that
	\[
	\Delta^2(f,f_n)  \leq   \frac{1}{c_{\tau}} \left[\mathbb{E}\left(   \rho_{\tau}(Y- f(X))  -  \rho_{\tau}(Y- f_n(X))    \right)+    \| f_n-f_{\tau}^*\|_{\infty}  \Delta(f,f_n ) \sqrt{F  }    \right],
	\]
	and
	\[
	\|f-f_n\|_{\ell_2}^2  \leq   \frac{2F}{c_{\tau}} \left[\mathbb{E}\left(   \rho_{\tau}(Y- f(X))  -  \rho_{\tau}(Y- f_n(X))    \right)+    \| f_n-f_{\tau}^*\|_{\infty}  \|f-f_n\|_{\ell_2}   \sqrt{F  }    \right],
	\]
	for any $f \in \tilde{\mathcal{I}}(L,W,S,B)$   and for some constant $c_{\tau}$. 
\end{lemma}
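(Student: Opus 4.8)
The plan is to expand the quantile-loss difference via Knight's identity, but now centered at the network function $f_n$ rather than at $f_\tau^*$; the price of this re-centering is a first-order term that is small precisely because $\|f_n-f_\tau^*\|_\infty\le c$. I would fix $x$ in the support of $X$, write $G_x:=F_{Y\mid X=x}$ (continuous, by the density bound in Assumption \ref{as2}) and $w(x):=f(x)-f_n(x)$. Knight's identity applied with $u=Y-f_n(x)$ and $v=w(x)$ gives
\[
\rho_\tau(Y-f(x))-\rho_\tau(Y-f_n(x))=w(x)\bigl(\mathbf{1}\{Y\le f_n(x)\}-\tau\bigr)+\int_0^{w(x)}\bigl(\mathbf{1}\{Y\le f_n(x)+s\}-\mathbf{1}\{Y\le f_n(x)\}\bigr)\,ds ,
\]
so that $\mathbb{E}\bigl[\rho_\tau(Y-f(x))-\rho_\tau(Y-f_n(x))\mid X=x\bigr]=\mathrm A(x)+\mathrm B(x)$ with $\mathrm A(x)=w(x)\bigl(G_x(f_n(x))-\tau\bigr)$ and $\mathrm B(x)=\int_0^{w(x)}\bigl(G_x(f_n(x)+s)-G_x(f_n(x))\bigr)\,ds$.

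Next I would bound the two pieces. For $\mathrm B(x)$: since $\|f_n-f_\tau^*\|_\infty$ is small, a fixed-radius neighborhood of $f_n(x)$ still lies inside the neighborhood of $f_\tau^*(x)$ on which Assumption \ref{as2} lower-bounds the conditional density by $\underline p$, and the argument behind Lemma \ref{lem1} (Lemma 8 of \cite{padilla2020adaptive}), run with center $f_n(x)$, gives $\mathrm B(x)\ge c_\tau D^2(w(x))$ — the quadratic regime for small $|w|$ coming from the density lower bound and the linear regime for large $|w|$ being automatic from monotonicity of $G_x$ and piecewise linearity of $\rho_\tau$, which is exactly why $D^2=\min\{|\cdot|,\,(\cdot)^2\}$ is the right object. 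For $\mathrm A(x)$: since $G_x(f_\tau^*(x))=\tau$ and the conditional density is bounded above (Assumption \ref{as2}), $|G_x(f_n(x))-\tau|\lesssim|f_n(x)-f_\tau^*(x)|\le\|f_n-f_\tau^*\|_\infty$, hence $|\mathrm A(x)|\lesssim\|f_n-f_\tau^*\|_\infty\,|w(x)|$. Taking $\mathbb{E}_X$, using $\mathbb{E}[D^2(f(X)-f_n(X))]=\Delta^2(f,f_n)$, and rearranging yields $c_\tau\Delta^2(f,f_n)\le\mathbb{E}[\rho_\tau(Y-f(X))-\rho_\tau(Y-f_n(X))]+C\|f_n-f_\tau^*\|_\infty\,\mathbb{E}|f(X)-f_n(X)|$ for constants $c_\tau,C>0$. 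Finally, because $\|f\|_\infty,\|f_n\|_\infty\le F$ we have $|f-f_n|\le 2F$ pointwise, so $D^2(f-f_n)\ge(f-f_n)^2/(2F)$ (checking $|f-f_n|\le1$ and $|f-f_n|>1$ separately; we may take $F\ge1$), whence $|f(X)-f_n(X)|\le\sqrt{2F}\,D(f(X)-f_n(X))$ and, by Jensen, $\mathbb{E}|f(X)-f_n(X)|\le\sqrt{2F}\,\Delta(f,f_n)$; substituting and absorbing absolute constants into $c_\tau$ gives the first claimed inequality.

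For the second inequality I would reuse $D^2(t)\ge t^2/(2F)$ for $|t|\le2F$ — giving $\|f-f_n\|_{\ell_2}^2\le2F\,\Delta^2(f,f_n)$ — together with the trivial $D^2(t)\le t^2$ — giving $\Delta(f,f_n)\le\|f-f_n\|_{\ell_2}$ — and insert both into the first inequality, which turns the prefactor into $2F/c_\tau$ and replaces $\Delta(f,f_n)$ by $\|f-f_n\|_{\ell_2}$ in the bracket. The step I expect to be the main obstacle is the curvature lower bound $\mathrm B(x)\ge c_\tau D^2(w(x))$ about the shifted center $f_n(x)$: one must verify carefully that the hypothesis $\|f_n-f_\tau^*\|_\infty\le c$ indeed transfers Assumption \ref{as2}'s density control to a fixed-width neighborhood of $f_n(x)$, and that the two growth regimes of $D^2$ are handled by a single constant, uniformly in $x$ and in $w(x)\in[-2F,2F]$.
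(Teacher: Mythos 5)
Your proposal is correct and follows essentially the same route as the paper: Knight's identity (Equation B.3 in \cite{belloni20111}) centered at $f_n$, with the first-order term controlled by $\|f_n-f_\tau^*\|_\infty$ via the conditional-density upper bound and the curvature term bounded below by $c_\tau D^2$ via the argument of Lemma 8 in \cite{padilla2020adaptive}, the shift from $f_\tau^*$ to $f_n$ being absorbed by the hypothesis $\|f_n-f_\tau^*\|_\infty\le c$. The only cosmetic difference is that you bound $\mathbb{E}|f(X)-f_n(X)|$ pointwise through $|t|\le\sqrt{2F}\,D(t)$ and Jensen, while the paper applies Cauchy--Schwarz and then $\mathbb{E}[(f-f_n)^2]\lesssim F\,\Delta^2(f,f_n)$; both yield the same $\sqrt{F}\,\Delta(f,f_n)$ cross term.
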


\begin{proof}
	Notice that  by Equation   B.3 in  \cite{belloni20111},
	\[
	\begin{array}{lll}
		\rho_{\tau}(Y -  f(X)  )-\rho_{\tau}(Y -  f_n(X)  )& =& \displaystyle  -(f(X) -  f_n(X)) (\tau -  1{   \{Y\leq f_n(X)\} } ) +    \\
		&&\displaystyle \int_{0}^{ f(X)-f_n(X)  }   \left[   1\{ Y\leq f_n(X) +z\}  -1{\{ Y\leq f_n(X) \} }\right]dz\\
		& = & \displaystyle  -(f(X) -  f_n(X)) (\tau -  1{   \{Y\leq f_{\tau}^*(X)\}  } )      - \\
		& &\displaystyle (f(X) -  f_n(X)) (1{  \{Y\leq f_{\tau}^*(X)\}  }  -  1{   \{Y\leq f_n(X)\} } ) + \\
		& &\displaystyle \int_{0}^{  f(X)-f_n(X)  }   \left[   1\{ Y\leq f_n(X) +z\}  -1{\{ Y\leq f_n(X) \} }\right]dz.\\
	\end{array}
	\]
	Hence, taking expectations and using Fubini's theorem,
	\[
	\begin{array}{lll}
		\mathbb{E}\left( \rho_{\tau}(Y -  f(X)  )-\rho_{\tau}(Y -  f_n(X)  )\right)& =&\displaystyle   \mathbb{E}\left(    -(f(X) -  f_n(X)) \mathbb{E}\left((\tau -  1{   \{Y\leq f_{\tau}^*(X)\}  } ) \bigg|   X\right)  \right)     - \\
		& &\displaystyle \mathbb{E}\left ( (f(X) -  f_n(X))\mathbb{E}\left(   (1{  \{Y\leq f_{\tau}^*(X)\}  }  -  1{   \{Y\leq f_n(X)\} } )  \bigg|    X\right)        \right)+ \\
		& &\displaystyle     \mathbb{E}\bigg(       \int_{0}^{  f(X)-f_n(X)  }   \bigg[   \mathbb{E}\left(1\{ Y\leq f_n(X) +z\}\bigg|  X \right) -  \\
		& & \,\,\,\,\,\,\,\,\,  \mathbb{E}\left(1\{ Y\leq f_n(X) \}\bigg|  X \right) \bigg]dz \bigg)\\
		& \geq & -c_1    \mathbb{E}\left[     \vert f(X) -  f_n(X)  \vert \cdot\vert  f_{\tau}^*(X) -  f_n(X)   \vert  \right] + \\
		& & c_{\tau} \mathbb{E}(        D^2(f(X)- f_n(X)  )      )\\
		& \geq&-c_1   \sqrt{\mathbb{E}\left[     \vert f(X) -  f_n(X)  \vert^2 \right]} \sqrt{\mathbb{E}\left[     \vert f_{\tau}^*(X) -  f_n(X)  \vert^2  \right]} +\\
		&&c_{\tau} \mathbb{E}(        D^2( f(X)- f_n(X)  )   )\\
		& \geq&-c_1   \| f_n-f_{\tau}^*\|_{\infty}   \sqrt{F  \Delta^2(f,f_n ) }\\
		&&c_{\tau} \mathbb{E}(        D^2( f(X)- f_n(X)  )      )\\
	\end{array}
	\]
	for a constant $c_1>0$, where the first inequality holds
	since the cumulative distribution function of  $Y$ conditioning on $X$  is Lipchitz around $f_{\tau}^*(X)$  by Assumption \ref{as2}, and by the same argument from the proof of Lemma 8  in \cite{padilla2020adaptive}. 
	
\end{proof}

\begin{definition}
	\label{def7}
	
	We define the empirical loss function \[
	\displaystyle	\hat{M}_n(\theta) =    \sum_{i=1}^{n} \hat{M}_{n,i}(\theta),
	\]
	where
	\[
	\hat{M}_{n,i}(f) =  \frac{1}{n}\left[\rho_{\tau}\{y_i -  f(x_i)    \}    -  \rho_{\tau}\{y_i -   f_n(x_i)   \}\right] ,
	\]
	and we set
	$$M_{n}(f) =      \mathbb{E}[\rho_{\tau}\{Y-  f(X)   \}   -  \rho_{\tau}\{Y -  f_n(X)   \} ].$$ 
\end{definition}

\begin{lemma}
	\label{lem3} 	Suppose  that  $\|  f_n -f_{\tau}^*\|_{\infty} \leq  c$ for a small enough constant $c$.  The estimator  $\hat{f}$   defined in  (\ref{eqn:estimator_2}) satisfies
	\[
	\Delta^2(\hat{f},f_n)  \leq    \frac{1}{c_{\tau}}\left[M_n(\hat{f})  -  \hat{M}_{n}(\hat{f})     +     \| f_n-f_{\tau}^*\|_{\infty}  \Delta(\hat{f},f_n ) \sqrt{F  }  \right].
	\]
	Furthermore,
	\[
	\|\hat{f}-f_n\|_{\ell_2}^2  \leq    \frac{2F   }{c_{\tau}}\left[M_n(\hat{f})  -  \hat{M}_{n}(\hat{f})     +     \| f_n-f_{\tau}^*\|_{\infty}\|\hat{f}-f_n\|_{\ell_2} \sqrt{F  }  \right].
	\]
\end{lemma}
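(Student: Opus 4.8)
Lemma \ref{lem3} is a short deduction: it combines the population-level curvature estimate of Lemma \ref{lem2} (which already converts an excess quantile risk into a bound on $\Delta^2$ and on $\|\cdot\|_{\ell_2}^2$, up to the cross term $\|f_n-f_\tau^*\|_\infty\,\Delta(\cdot,f_n)\sqrt{F}$) with the elementary ``basic inequality'' satisfied by the empirical minimizer $\hat f$. The only thing to add to Lemma \ref{lem2} is that the population excess risk $M_n(\hat f)$ may be replaced by $M_n(\hat f)-\hat M_n(\hat f)$, which follows because $\hat M_n(\hat f)\le 0$.

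\textbf{Steps.} First I would observe that $\hat f$ is, by construction in (\ref{eqn:estimator_2}), a network in $\mathcal{I}(L,W,S,B)$ with $\|\hat f\|_\infty\le F$, hence $\hat f\in\tilde{\mathcal{I}}(L,W,S,B)$; likewise $f_n\in\tilde{\mathcal{I}}(L,W,S,B)$ by its definition as a $\|\cdot\|_\infty$-projection subject to $\|f_n\|_\infty\le F$. Since the smallness hypothesis $\|f_n-f_\tau^*\|_\infty\le c$ here is exactly the one required by Lemma \ref{lem2}, I can apply that lemma with $f=\hat f$, obtaining
\[
\Delta^2(\hat f,f_n)\;\le\;\frac{1}{c_\tau}\Big[\,\mathbb{E}\big(\rho_\tau(Y-\hat f(X))-\rho_\tau(Y-f_n(X))\big)+\|f_n-f_\tau^*\|_\infty\,\Delta(\hat f,f_n)\sqrt{F}\,\Big],
\]
and the analogous inequality for $\|\hat f-f_n\|_{\ell_2}^2$ with the prefactor $2F/c_\tau$. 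The first bracketed expectation is exactly $M_n(\hat f)$ in the notation of Definition \ref{def7}. Next, since $f_n$ is feasible for the minimization in (\ref{eqn:estimator_2}) and $\hat f$ is its minimizer, $\sum_{i=1}^n\rho_\tau(y_i-\hat f(x_i))\le\sum_{i=1}^n\rho_\tau(y_i-f_n(x_i))$; dividing by $n$ this says precisely $\hat M_n(\hat f)\le 0$. Therefore $M_n(\hat f)\le M_n(\hat f)-\hat M_n(\hat f)$, and substituting this into the two displays above yields the two claimed inequalities.

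\textbf{Main obstacle.} There is essentially no analytic obstacle at this stage — all the work is in Lemma \ref{lem2} (the curvature/Knight-type identity and Assumption \ref{as2}) and, later, in bounding $M_n(\hat f)-\hat M_n(\hat f)$ uniformly over $\tilde{\mathcal{I}}(L,W,S,B)$ by an empirical-process argument. The only care needed here is bookkeeping: verifying feasibility of $f_n$, checking that the constant $c$ in the hypothesis is consistent with the one demanded by Lemma \ref{lem2}, and keeping the closely named objects straight ($\hat M_n$ versus $M_n$, $\Delta$ versus $\Delta_n$, and the $\|\cdot\|_\infty$-projection $f_n$ used here versus the quantile-risk projection used for Theorem \ref{thm:risk}).
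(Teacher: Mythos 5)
Your proposal is correct and matches the paper's own proof: apply Lemma \ref{lem2} with $f=\hat f$, identify the expectation as $M_n(\hat f)$, and use the optimality of $\hat f$ over the feasible $f_n$ (i.e.\ $\hat M_n(\hat f)\le 0$) to replace $M_n(\hat f)$ by $M_n(\hat f)-\hat M_n(\hat f)$. No further comment is needed.
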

\begin{proof}
	By Lemma  \ref{lem2} we have that
	\[
	\begin{array}{lll}
		\Delta^2(\hat{f},f_n)  &\leq  &  \displaystyle   \frac{1}{c_{\tau}} \left[\mathbb{E}\left(   \rho_{\tau}(Y- \hat{f}(X))  -  \rho_{\tau}(Y- f_n(X))    \right) +     \| f_n-f_{\tau}^*\|_{\infty}  \Delta(\hat{f},f_n ) \sqrt{F  }\right] \\ 
		&\leq & \displaystyle  \frac{1}{c_{\tau}} \bigg[  \mathbb{E}\left(   \rho_{\tau}(Y- \hat{f}(X))  -  \rho_{\tau}(Y- f_n(X))    \right) +     \| f_n-f_{\tau}^*\|_{\infty}  \Delta(\hat{f},f_n ) \sqrt{F  }-   \\
		&&	\displaystyle   \frac{1}{n}\sum_{i=1}^n  \rho_{\tau}(y_i -  \hat{f}(x_i))  +\sum_{i=1}^n  \rho_{\tau}(y_i -  f_n(x_i)) 
		\bigg],
	\end{array}
	\]
	where the last inequality follows by the optimality of $\hat{f}$.
\end{proof}

\begin{lemma}
	\label{lem4}
	Suppose that
	\begin{equation}
		\label{eqn:cond1}
		3\mathbb{E}\left(\underset{f \in   \tilde{\mathcal{I}}(L,W,S,B), \, \, \|f-f_n\|^2_{\ell_2}\leq r^2 }{\sup}\,  \frac{1}{n}\sum_{i=1}^{n}\xi_i ( f(x_i)-  f_n(x_i))^2    \right) \leq  r^2, 
	\end{equation}
	for  $\{\xi_i\}_{i=1}^n$  Rademacher variables independent of $\{(x_i,y_i)\}_{i=1}^n$, 	and 
	\begin{equation}
		\label{eqn:cond2}
		\max\left\{ 4F\sqrt{   \frac{ \gamma}{n} } , 4F   \sqrt{\frac{\gamma }{3n}   }\right\}  \leq r.
	\end{equation}
	Then with probability at least  $1- e^{-\gamma}$,  
	$\|f-f_n\|_{\ell_2}^2 \leq   r^2$  with  $f  \in \tilde{\mathcal{I}}(L,W,S,B)$ implies
	\[
	\|f-f_n\|_n^2 \,\leq  \,  (2r)^2.
	\]
\end{lemma}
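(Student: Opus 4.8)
The plan is to read this as a standard localization argument that converts the population $\ell_2$-ball into an empirical $\|\cdot\|_n$-ball using one-sided symmetrization followed by Talagrand's concentration inequality. First I would introduce the squared class $\mathcal{G}_r := \{\, g_f := (f-f_n)^2 \;:\; f \in \tilde{\mathcal{I}}(L,W,S,B),\ \|f-f_n\|_{\ell_2}^2 \le r^2 \,\}$ and record its elementary properties: since $\|f\|_{\infty},\|f_n\|_{\infty}\le F$, every $g_f$ takes values in $[0,4F^2]$, while $\E[g_f(X)] = \|f-f_n\|_{\ell_2}^2 \le r^2$ and hence $\Var(g_f(X)) \le \E[g_f(X)^2] \le 4F^2\,\E[g_f(X)] \le 4F^2 r^2$. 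The object to control is
\[
Z \;:=\; \sup_{g\in\mathcal{G}_r}\Big( \frac1n\sum_{i=1}^n g(x_i) - \E[g(X)] \Big),
\]
because for any admissible $f$ one has $\|f-f_n\|_n^2 = \E[g_f(X)] + \big(\tfrac1n\sum_i g_f(x_i) - \E[g_f(X)]\big) \le r^2 + Z$, so it suffices to show $Z \le 3r^2$ on an event of probability at least $1-e^{-\gamma}$. (The class $\tilde{\mathcal{I}}(L,W,S,B)$ is parametrized by finitely many weights and weights $\mapsto g_f$ is continuous, so separability/measurability of the supremum is routine.)

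Next I would bound $\E Z$ by the symmetrized complexity appearing in the hypothesis. The one-sided symmetrization inequality gives $\E Z \le 2\,\E\big[\sup_{g\in\mathcal{G}_r}\tfrac1n\sum_i \xi_i g(x_i)\big]$, and by assumption (\ref{eqn:cond1}) the right-hand side is at most $\tfrac23 r^2$. Then I would apply Talagrand's concentration inequality for suprema of empirical processes, in Bousquet's form, to the centered, uniformly bounded class $\{g-\E g : g\in\mathcal{G}_r\}$ with sup-norm bound $b=4F^2$ and variance proxy $\sigma^2 = 4F^2 r^2$, obtaining with probability at least $1-e^{-\gamma}$
\[
Z \;\le\; \E Z \;+\; \sqrt{\frac{2\gamma\big(\sigma^2 + 2b\,\E Z\big)}{n}} \;+\; \frac{b\,\gamma}{3n}.
\]

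Finally I would substitute $\E Z \le \tfrac23 r^2$, $\sigma^2 = 4F^2 r^2$, $b=4F^2$, and use condition (\ref{eqn:cond2}), which yields $F\sqrt{\gamma/n}\le r/4$ (hence $F^2\gamma/n \le r^2/16$) together with $F\sqrt{\gamma/(3n)}\le r/4$. These control the three terms: the first is $\le \tfrac23 r^2$; the square-root term equals $\sqrt{56/3}\;F r\sqrt{\gamma/n}\cdot(1/1) \le (\sqrt{56/3}/4)\,r^2 < \tfrac{9}{4}r^2$; and $\tfrac{b\gamma}{3n} = \tfrac{4F^2\gamma}{3n} \le \tfrac{r^2}{12}$. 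Summing gives $Z \le 3r^2$, whence $\|f-f_n\|_n^2 \le r^2 + Z \le 4r^2 = (2r)^2$ for every $f\in\tilde{\mathcal{I}}(L,W,S,B)$ with $\|f-f_n\|_{\ell_2}^2\le r^2$, on the event of probability at least $1-e^{-\gamma}$. The only mildly delicate step is keeping the numerical constants consistent so that the two deviation terms plus $\E Z$ really do sum below $3r^2$; correctly recalling and instantiating the Bousquet form (in particular the $2b\,\E Z$ cross term, which is why $\E Z$ enters the radius condition) is the crux, and everything else is bookkeeping.
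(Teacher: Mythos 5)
Your proposal is correct and follows essentially the same route as the paper: both work with the squared class $(f-f_n)^2$, use the sup-norm bound $4F^2$ and the variance bound $4F^2r^2$ obtained from the fourth-moment inequality, feed condition (\ref{eqn:cond1}) into the Rademacher-complexity term and condition (\ref{eqn:cond2}) into the deviation terms. The only difference is presentational: the paper invokes Theorem 2.1 of \cite{bartlett2005local}, which packages symmetrization and Talagrand/Bousquet concentration into a single statement, whereas you unpack that step into an explicit symmetrization followed by Bousquet's inequality, with slightly different (but equally valid) constants.
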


\begin{proof}
	First notice that 
	\[
	\vert( f(x)-f_n(x))^2 \vert  \leq   2\left[F^2 +   \|f_n\|_{\infty}^2 \right],
	\]
	for all $x$. Hence, 
	\[
	\begin{array}{lll}
		\mathbb{E}\left(  \left(  f(X)-f_n(X)\right)^4  \right) 
		&\leq & 2(F^2   +    \|f_n\|_{\infty}^2 ) \mathbb{E}\left(  \left(f(X)-f_n(X)\right)^2  \right) \\
		& \leq& 4 F^2\|f-f_n\|_{\ell_2}^2 
	\end{array}
	\]
	
	Then by Theorem 2.1 in \cite{bartlett2005local} with probability at least $1-\exp(-\gamma)$,
	\begin{equation}
		\begin{array}{l}
			\underset{f \in   \tilde{\mathcal{I}}(L,W,S,B),  \,\,   \|f-f_n\|_{\ell_2}^2\leq r^2}{\sup}\, \left\{\|f-f_n\|_n^2 - \|f-f_n\|_{\ell_2}^2 \right\}\\
			\leq    \displaystyle 3\mathbb{E}\left(\underset{f \in   \tilde{\mathcal{I}}(L,W,S,B),  \,\, \|f-f_n\|_{\ell_2}^2\leq r^2 }{\sup}\,  \frac{1}{n}\sum_{i=1}^{n}\xi_i ( f(X_i)-  f_n(X_i))^2    \right)\\
			\displaystyle   +    4rF\sqrt{   \frac{ \gamma}{n} } +  \frac{16F^2   \gamma  }{3n}\\
		\end{array}
	\end{equation}
	and the claim  follows.
\end{proof}

\begin{lemma}
	\label{lem5}
	Suppose that $\| \hat{f}-f_n\|_{\ell_2}     \leq  r_0$, with  $r_0$  satisfying (\ref{eqn:cond1})-(\ref{eqn:cond2}) and Assumption \ref{as4} holds. Also, with the notation of Assumption \ref{as3}, suppose that  for the class $\mathcal{I}(L,W,S,B)$  the parameters are chosen as
	\[
	\begin{array}{l}
		L =  3+ 2\ceil{    \log_2\left(    \frac{3^{  \max\{d,m\}  }}{  \epsilon  c_{d,m} }\right)  +5 }\ceil{  \log_2 \max\{d,m\} },\,\,\,\,\,\,\, W = W_0 N,\\
		S = (L-1)W_0^2 N +N,\,\,\,\,\,\, B = O\left( N^{  (v^{-1}+   d^{-1})(  \max\{ 1,   (d/p-s)_{+}\} )   }\right),
	\end{array}
	\] 
	for  a constant $c_{d,m} $  that depends on $d$ and $m$, a constant $W_0$,  and where $v= (s-\delta)/\delta$,
	\[
	\delta = \frac{d}{p},\,\,\,\,\,\,\, \,\,\,\,\,\,\,  N \asymp   n^{  \frac{d}{2s+d} }.
	\]
	Then for some positive constant $C_0$ it holds that 
	\[
	\begin{array}{lll}
		\,	\|\hat{f}-f_n\|^2_{\ell_2}    & \leq&  C_0 \bigg[   r_0F^{2.5}\sqrt{   \frac{ \gamma}{n} } +  \frac{ F^{2.5}   \gamma  }{n}  + \\
		& &    r_0F \sqrt{   \frac{N  (\log N)^2 }{n}  } +       r_0F\sqrt{\frac{N \left[(\log N)^2+  \log r_0^{-1} + \log n\right] }{  n }        }  + N^{-s/d} r_0 F^{1.5}\bigg]
	\end{array}
	\]
	with probability at least $1-\exp(-\gamma)$, where  $N \asymp   n^{  \frac{d}{2s+d}  }$.
\end{lemma}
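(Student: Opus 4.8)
The plan is to start from the one-sided ``basic inequality'' of Lemma~\ref{lem3} and to control its empirical-process term by a \emph{localized} chaining argument, using Lemma~\ref{lem4} to transfer the localization between the population and empirical $\ell_2$-metrics and the Besov approximation theory of \cite{suzuki2018adaptivity} to control the bias term. Concretely, Lemma~\ref{lem3} gives deterministically
\[
\|\hat f - f_n\|_{\ell_2}^2 \;\le\; \frac{2F}{c_\tau}\Big[ M_n(\hat f) - \hat M_n(\hat f) \;+\; \|f_n - f_\tau^*\|_\infty\,\|\hat f - f_n\|_{\ell_2}\,\sqrt F \Big].
\]
For the last summand, the stated choices of $L,W,S,B$ and $N\asymp n^{d/(2s+d)}$ are exactly those under which the approximation result of \cite{suzuki2018adaptivity} (valid in the regime $s\ge d/p$ of Assumption~\ref{as4}) produces a network $f_n\in\tilde{\mathcal{I}}(L,W,S,B)$ with $\|f_n-f_\tau^*\|_\infty\lesssim N^{-s/d}$; combined with the hypothesis $\|\hat f - f_n\|_{\ell_2}\le r_0$ this bounds that term by $C\,N^{-s/d}r_0F^{1.5}$. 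All that remains is to bound $M_n(\hat f)-\hat M_n(\hat f)$.

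Since $\|\hat f - f_n\|_{\ell_2}\le r_0$ and $r_0$ satisfies (\ref{eqn:cond1})--(\ref{eqn:cond2}), Lemma~\ref{lem4} gives that on an event $E_0$ of probability at least $1-e^{-\gamma}$ one also has $\|\hat f - f_n\|_n\le 2r_0$, so on $E_0$,
\[
M_n(\hat f) - \hat M_n(\hat f)\;\le\; Z:=\sup\Big\{ M_n(f)-\hat M_n(f)\;:\; f\in\tilde{\mathcal{I}}(L,W,S,B),\ \|f-f_n\|_{\ell_2}\le r_0,\ \|f-f_n\|_n\le 2r_0\Big\}.
\]
To bound $\mathbb E Z$ I would symmetrize and apply the Ledoux--Talagrand contraction inequality: each map $t\mapsto \rho_\tau(y_i-f_n(x_i)-t)-\rho_\tau(y_i-f_n(x_i))$ vanishes at $0$ and is $1$-Lipschitz (as $\max\{\tau,1-\tau\}\le1$), so
\[
\mathbb E Z\;\lesssim\;\mathbb E\,\sup\Big\{\tfrac1n\textstyle\sum_{i=1}^n\xi_i\big(f(x_i)-f_n(x_i)\big)\;:\;f\in\tilde{\mathcal{I}},\ \|f-f_n\|_n\le 2r_0\Big\},
\]
with $\xi_i$ i.i.d.\ Rademacher; this is the empirical Rademacher complexity of the localized class, and Dudley's entropy integral bounds it by $\tfrac{C}{\sqrt n}\int_0^{2r_0}\sqrt{\log\mathrm{N}\big(u,\tilde{\mathcal{I}}(L,W,S,B),\|\cdot\|_\infty\big)}\,du$ (translation by $f_n$ and $\|\cdot\|_n\le\|\cdot\|_\infty$ let us use the $L^\infty$-covering number of the unshifted class).

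Inserting the standard $L^\infty$-covering bound for sparse ReLU networks (cf.\ \cite{schmidt2017nonparametric,bartlett2019nearly}), $\log\mathrm{N}(u,\tilde{\mathcal{I}}(L,W,S,B),\|\cdot\|_\infty)\lesssim SL\log\big(BWL/u\big)$, and plugging in $S\asymp N$, $L\asymp\log N$, $W\asymp N$, $B$ polynomial in $N$, and $\log N\asymp\log n$, yields $\log\mathrm{N}(u,\cdot)\lesssim N(\log N)^2+N\log N\cdot\log(1/u)$; carrying $\sqrt{\cdot}$ through the integral over $(0,2r_0]$, where $\int_0^{2r_0}\sqrt{\log(1/u)}\,du\lesssim r_0\sqrt{\log r_0^{-1}}$ and truncation at scale $n^{-1}$ contributes the extra $\log n$, gives $\mathbb E Z\lesssim r_0\sqrt{N(\log N)^2/n}+r_0\sqrt{N[(\log N)^2+\log r_0^{-1}+\log n]/n}$. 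Finally, to pass from $\mathbb E Z$ to $Z$ with high probability I would apply a Talagrand/Bousquet concentration inequality: on the index set of $Z$ the centered summands are bounded by $\|f-f_n\|_\infty\le 2F$ and have variance at most $\|f-f_n\|_{\ell_2}^2\le r_0^2$, so $Z\le\mathbb E Z + C\big(r_0\sqrt{\gamma/n}+F\gamma/n\big)$ on an event of probability $\ge 1-e^{-\gamma}$. Substituting into the basic inequality on the intersection of the two good events, and absorbing the powers of $F$ accrued from the $2F/c_\tau$ prefactor and the constants of Lemma~\ref{lem2}, produces the claimed bound (after rescaling $\gamma$ by an additive constant).

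The main obstacle is the localized chaining step: one must be careful to match the localization radius across the population metric $\|\cdot\|_{\ell_2}$ (in which the hypothesis and the variance proxy are expressed) and the empirical metric $\|\cdot\|_n$ (in which the data-dependent covering numbers live) --- this is precisely the function of Lemma~\ref{lem4} --- and then to extract the exact logarithmic factors $\log r_0^{-1}$ and $\log n$ from truncating Dudley's integral. The bookkeeping of the $F$-powers through Lemmas~\ref{lem2}--\ref{lem3} is tedious but routine, and the symmetrization, contraction, ReLU covering-number, and concentration ingredients are all standard.
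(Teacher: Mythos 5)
Your proposal follows essentially the same route as the paper's proof of Lemma~\ref{lem5}: the basic inequality of Lemma~\ref{lem3}, the bias bound $\|f_n-f_{\tau}^*\|_{\infty}\lesssim N^{-s/d}$ from Theorem 2 of \cite{suzuki2018adaptivity}, symmetrization/contraction followed by Dudley's entropy integral with sup-norm covering numbers of the sparse ReLU class, Lemma~\ref{lem4} to match the population and empirical localization radii, and a Talagrand/Bousquet-type concentration step (the paper invokes Theorem 2.1 of \cite{bartlett2005local} for exactly this purpose). The only differences are cosmetic — you quote a Schmidt-Hieber-style covering bound where the paper uses Suzuki's, and your bookkeeping of $F$-powers and logarithmic factors differs slightly — but these are absorbed into the stated constants, so the argument is correct and essentially identical to the paper's.
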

\begin{proof}
	Let\[
	\mathcal{G} = \left\{    g \,:\,    g(x,y) = \rho_{\tau}(y-f(x)) - \rho_{\tau}(y-f_n(x))  ,\,\,\,    f\in \tilde{\mathcal{I}}(L,W,S,B),\,\, \|f-f_n\|_{\ell_2}   \leq  r_0\right\}.
	\]
	Then for $\xi_1,\ldots,\xi_n$  independent Rademacher  variables independent of $\{(x_i,y_i)\}_{i=1}^n$, we have that
	\begin{equation}
		\label{eqn:e1}
		\begin{array}{lll}
			c_{\tau}\,	\|\hat{f}-f_n\|_{\ell_2}^2  &\leq & 2F[M_n(\hat{f})  -  \hat{M}_{n}(\hat{f})      +     \| f_n-f_{\tau}^*\|_{\infty}\|\hat{f}-f_n\|_{\ell_2}  \sqrt{F } ]\\
			& \leq &  \displaystyle   2F\underset{ g\in  \mathcal{G}  }{\sup}  \left\{ \mathbb{E}(g(X,Y)) - \frac{1}{n}\sum_{i=1}^{n}g(x_i,y_i)     \right\}  +      \| f_n-f_{\tau}^*\|_{\infty} \|\hat{f}-f_n\|_{\ell_2}  F^{3/2}\\
			& \leq&   \displaystyle 12F\mathbb{E}\left(    \underset{g \in    \mathcal{G} }{\sup} \,    \frac{1}{n} \sum_{i=1}^{n}\xi_i  g(x_i,y_i)  \bigg| (x_1,y_1),\ldots,(x_n,y_n) \right)\\
			& &\displaystyle +  4 r_0F^{2.5}\sqrt{   \frac{\gamma}{n} } +  \frac{100 F^{2.5}   \gamma  }{3n} +     \| f_n-f_{\tau}^*\|_{\infty} \|\hat{f}-f_n\|_{\ell_2}  F^{1.5},\\
		\end{array}\, 
	\end{equation}
	where  the first inequality  follows from   Lemmas \ref{lem3}, and the third happens with probability at least $1-e^{-\gamma}$ and holds by 
	Theorem 2.1 in \cite{bartlett2005local}.

	Next, notice that for a constant  $C>0$,
	\begin{equation}
		\label{eqn:entropy}
		\begin{array}{lll}
			\displaystyle  \mathbb{E}_{\xi} \left(    \underset{g \in    \mathcal{G} }{\sup} \,    \frac{1}{n} \sum_{i=1}^{n}\xi_i  g(x_i,y_i)   \right) & \leq &\displaystyle  \mathbb{E}_{\xi} \left(    \underset{f \in    \mathcal{I}(L,W,S,B)  ,\,\,\|f\|_{\infty} \leq  F\,\,\|f-f_n\|_{\ell_2}  \leq  r_0    }{\sup} \,    \frac{1}{n} \sum_{i=1}^{n}   \xi_i (  f(x_i) - f_n(x_i)  ) \right)\\
			& \leq & \displaystyle  \mathbb{E}_{\xi} \left(    \underset{f \in    \mathcal{I}(L,W,S,B)  ,\,\,\|f\|_{\infty} \leq  F,\,\,  \|f_n-f\|_n  \leq  2r_0    }{\sup} \,    \frac{1}{n} \sum_{i=1}^{n}   \xi_i (  f(x_i) - f_n(x_i)  ) \right)\\
			& \leq&  \displaystyle     \underset{0 < \alpha < 2r_0  }{\inf} \left\{   4\alpha  +      \frac{12}{  \sqrt{n} }   \int_{\alpha}^{2  r_0 } \sqrt{   \log  \mathcal{N}(     \delta, \tilde{\mathcal{I}}(L,W,S,B), \|\cdot\|_n  )   }  d\delta\right\}\\
			& \leq&   \displaystyle \underset{0 < \alpha < 2r_0  }{\inf} \left\{   4\alpha  +      \frac{12}{  \sqrt{n} }   \int_{\alpha}^{2  r_0 } \sqrt{   \log  \mathcal{N}(     \delta, \tilde{\mathcal{I}}(L,W,S,B), \|\cdot\|_{\infty}  )   }  d\delta\right\}\\
			& \leq &   \displaystyle \underset{0 < \alpha < r_0  }{\inf} \left\{   4\alpha  +  \frac{24   r_0 }{  \sqrt{n} }\sqrt{   \log  \mathcal{N}(    \alpha, \tilde{\mathcal{I}}(L,W,S,B), \|\cdot\|_{\infty}  )   }   \right\},\\
			& \leq &   C\displaystyle \underset{0 < \alpha <  r_0  }{\inf} \left\{   \alpha  +  r_0  \sqrt{\frac{N \left[(\log N)^2+    \log \alpha^{-1}\right] }{  n }        }\right\},\\
		\end{array}
	\end{equation}
	where the first  inequality  follows 	by Talagrand's inequality \citep{ledoux2013probability}, and the second holds with probability at least  $1-\exp(-\gamma)$ by Lemma \ref{lem4},  the third  by Dudley's chaining inequality,  and the last  by the proof Theorem 2 in \cite{suzuki2018adaptivity} . The latter  theorem also gives 
	$N  \asymp n^{  \frac{d}{2s+d} }$, and
	\[
	\|f_n-f_{\tau}^*\|_{\infty}   \leq  C_1 N^{-s/d},
	\]
	for some constant  $C_1>0$.
	Hence,   taking  
	\[
	\alpha =    r_0 \sqrt{   \frac{  N  (\log N)^2 }{n}  },
	\]
	(\ref{eqn:e1}) and (\ref{eqn:entropy}) imply
	\[
	\begin{array}{lll}
		\,	\|\hat{f}-f_{\tau}^*\|_{\ell_2}^2    & \leq&  \frac{1}{ c_{\tau}}\bigg[   4r_0F^{2.5}\sqrt{   \frac{ \gamma}{n} } +  \frac{100 F^{2.5}   \gamma  }{3n}  + \\
		& &  C r_0 F\sqrt{   \frac{ N  (\log N)^2 }{n}  } +     C r_0F\sqrt{\frac{N \left[(\log N)^2+  \log r_0^{-1} + \log n\right] }{  n }        }+ C_1 r_0  N^{-s/d}F^{1.5} \bigg]
	\end{array}
	\]
	with probability at least $1-2e^{-\gamma}$.
\end{proof}

\begin{lemma}
	\label{lem6}
	Let $r^*$ be defined as
	\[
	r^* = \inf \left\{ r>0\,:\,  	3\mathbb{E}\left(\underset{f \in   \tilde{\mathcal{I}}(L,W,S,B),  \,\,  \|f-f_n\|_{\ell_2}  \leq s }{\sup}\,  \frac{1}{n}\sum_{i=1}^{n}\xi_i ( f(x_i)-  f_n(x_i))^2   \right)   <s^2 ,\,\,\forall s \geq r   \right\},
	\]
	for  $\{\xi_i\}_{i=1}^n$  Rademacher variables independent of $\{(x_i,y_i)\}_{i=1}^n$.
	Then  under the conditions of Lemma \ref{lem5},
	\[
	r^*  \leq    \displaystyle    \tilde{C} \left[  \sqrt{   \frac{ N  (\log N)^2 }{n}  } +      \sqrt{\frac{N \left[(\log N)^2+ \log n\right] }{  n }        } \right],  
	\]
	for a constant $\tilde{C}>0$ and with $N$  satisfying  $N  \asymp n^{  \frac{d}{2s+d} }$.
\end{lemma}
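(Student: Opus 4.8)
The statement is a standard critical-radius fixed-point computation; write $r_*$ for the right-hand side of the claimed bound. It suffices to verify that for every $s\ge r_*$ one has $3\,\mathbb{E}\big(\sup_{f,\,\|f-f_n\|_{\ell_2}\le s}\tfrac1n\sum_i\xi_i(f(x_i)-f_n(x_i))^2\big)<s^2$, because then $r_*$ belongs to the set defining $r^*$ and hence $r^*\le r_*$. The first step is to peel off the square: every $f\in\tilde{\mathcal{I}}(L,W,S,B)$ and $f_n$ have sup-norm at most $F$, so the increments $f(x_i)-f_n(x_i)$ live in $[-2F,2F]$, where $t\mapsto t^2$ is $4F$-Lipschitz and vanishes at $0$; the contraction principle for Rademacher averages \cite{ledoux2013probability} — applied conditionally on the design and then integrated — therefore gives an absolute constant $c_1$ with
\[
\mathbb{E}\!\left(\sup_{f\in\tilde{\mathcal{I}}(L,W,S,B),\,\|f-f_n\|_{\ell_2}\le s}\frac1n\sum_{i=1}^n\xi_i\big(f(x_i)-f_n(x_i)\big)^2\right)\;\le\;c_1 F\,\mathbb{E}\!\left(\sup_{f\in\tilde{\mathcal{I}}(L,W,S,B),\,\|f-f_n\|_{\ell_2}\le s}\frac1n\sum_{i=1}^n\xi_i\big(f(x_i)-f_n(x_i)\big)\right).
\]

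The second step is to bound the linear Rademacher complexity on the right, and here I would simply reuse the chain of inequalities (\ref{eqn:entropy}) from the proof of Lemma~\ref{lem5} with $r_0$ replaced by $s$: localizing the $\ell_2$-ball to an empirical-norm ball, invoking Dudley's entropy integral, and plugging in the $\|\cdot\|_\infty$-covering bound $\log\mathcal{N}\big(\alpha,\tilde{\mathcal{I}}(L,W,S,B),\|\cdot\|_\infty\big)\lesssim N[(\log N)^2+\log\alpha^{-1}]$ of \cite{suzuki2018adaptivity} yields, for any $0<\alpha<s$,
\[
\mathbb{E}\!\left(\sup_{f,\,\|f-f_n\|_{\ell_2}\le s}\frac1n\sum_{i=1}^n\xi_i\big(f(x_i)-f_n(x_i)\big)\right)\;\le\;C\!\left(\alpha+s\sqrt{\frac{N\big[(\log N)^2+\log\alpha^{-1}\big]}{n}}\right).
\]
Taking $\alpha=s\sqrt{N(\log N)^2/n}$ (which is $<s$, since $N(\log N)^2/n=o(1)$ when $N\asymp n^{d/(2s+d)}$) and using that $N\ge1$ forces $r_*\gtrsim n^{-1/2}$, one gets $\log\alpha^{-1}=\log s^{-1}+\tfrac12\log\big(n/(N(\log N)^2)\big)\lesssim\log n$ uniformly over $s\ge r_*$, so that the displayed bound becomes $\lesssim s\big(\sqrt{N(\log N)^2/n}+\sqrt{N[(\log N)^2+\log n]/n}\big)$. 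Combined with the contraction estimate, this shows $3\,\mathbb{E}\big(\sup_{f,\,\|f-f_n\|_{\ell_2}\le s}\tfrac1n\sum_i\xi_i(f(x_i)-f_n(x_i))^2\big)\le c_2 F\, s\big(\sqrt{N(\log N)^2/n}+\sqrt{N[(\log N)^2+\log n]/n}\big)$ for all $s\ge r_*$; choosing the constant $\tilde{C}$ in the definition of $r_*$ to be, say, $2c_2F$ makes this quantity strictly below $s^2$ for every $s\ge r_*$, which is exactly what is needed.

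The one genuinely delicate point — and the expected main obstacle — is the localization used in the second step, namely replacing the deterministic envelope of order $F$ by a radius of order $s$ inside Dudley's integral. In the proof of Lemma~\ref{lem5} this is carried out through Lemma~\ref{lem4}, whose hypothesis (\ref{eqn:cond1}) at a given radius is in effect the very inequality we are trying to establish, so the argument as stated is mildly circular at the scale $s$. I would break the circle with the sub-root fixed-point machinery of \cite{bartlett2005local}: pass to the star-hull of $\{f-f_n:f\in\tilde{\mathcal{I}}(L,W,S,B),\ \|f\|_\infty\le F\}$, on which the population localized Rademacher complexity $s\mapsto\mathbb{E}\big(\sup_{\|f-f_n\|_{\ell_2}\le s}\tfrac1n\sum_i\xi_i(f(x_i)-f_n(x_i))\big)$ is sub-root; its fixed point is then controlled, without any circularity, by $\tfrac{C}{\sqrt n}\int_0^{r}\sqrt{\log\mathcal{N}(\delta,\tilde{\mathcal{I}}(L,W,S,B),\|\cdot\|_\infty)}\,d\delta$ evaluated at $r=r^*$, which reproduces the estimate above. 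An equivalent alternative is a two-stage bootstrap: first use the crude envelope to get a preliminary bound $r^*\lesssim\big(N[(\log N)^2+\log n]/n\big)^{1/4}$, which legitimizes Lemma~\ref{lem4} at all larger radii, and then feed that back. The remaining bridges — passing between the conditional $\mathbb{E}_\xi$ and the unconditional $\mathbb{E}$, and bounding the linear supremum trivially by $2F$ on the low-probability exceptional set — are handled exactly as in the proof of Lemma~\ref{lem5}, and the contraction step, the covering-number input, and the optimization over $\alpha$ are all routine.
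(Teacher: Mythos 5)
You have correctly located the crux --- replacing the population-norm localization $\|f-f_n\|_{\ell_2}\le s$ by an empirical-norm radius of order $s$ before running Dudley --- but your proof does not actually close it, and it misses the simple non-circular route the paper takes. The paper does \emph{not} verify the defining inequality at every $s$ above the claimed bound; it works at the critical radius $r^*$ itself. By the definition of $r^*$ as an infimum, the hypothesis (\ref{eqn:cond1}) of Lemma \ref{lem4} holds at every radius $\ge r^*$ (and at $r^*$ by monotonicity of the localized complexity in the radius), so Lemma \ref{lem4} can be invoked there with $\gamma=\log n$ without any circularity, provided $r^*$ also satisfies (\ref{eqn:cond2}) --- and if it does not, then $r^*\lesssim F\sqrt{(\log n)/n}$ and the claim is trivial, a case you do not address. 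On the event $E$ of Lemma \ref{lem4} the $\ell_2$-ball of radius $r^*$ sits inside the empirical ball of radius $2r^*$, the complement contributes only $O(F^2/n)$, and Dudley applied to the class of \emph{squared} differences (via the Lipschitz relation $|g_1-g_2|\le 4F\|f_1-f_2\|_\infty$, which replaces your contraction step --- a cosmetic difference) together with the sup-norm entropy bound of \cite{suzuki2018adaptivity} yields the self-bounding inequality $(r^*)^2\lesssim r^*\sqrt{N[(\log N)^2+\log n]/n}+F^2/n$, which is then solved for $r^*$. That is the whole argument.

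Your two proposed repairs of the circularity do not substitute for this. The appeal to the star-hull/sub-root machinery of \cite{bartlett2005local} asserts that the fixed point ``is controlled, without any circularity, by the entropy integral at $r=r^*$,'' but this is precisely the statement to be proved: with only $\|\cdot\|_\infty$-covering numbers available, the upper limit of the (conditional) Dudley integral is the \emph{empirical} radius of the population-localized ball, and controlling that radius is exactly the comparison supplied by Lemma \ref{lem4}/Theorem 2.1 of \cite{bartlett2005local}, whose hypothesis is again a bound on the localized complexity of the squares at that scale. The two-stage bootstrap as you describe it also falls short: one crude round gives $r^*\lesssim\bigl(N[(\log N)^2+\log n]/n\bigr)^{1/4}$, and feeding this back only lets you run Dudley up to that crude radius, not up to $2s$, so a single feedback yields an exponent strictly between $1/4$ and $1/2$; recovering the rate $\sqrt{N[(\log N)^2+\log n]/n}$ this way requires iterating the scheme (the exponent improves geometrically toward $1/2$), which you neither state nor carry out. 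So as written the argument has a genuine gap at its central step, one that the paper's fixed-point-at-$r^*$ argument (equation (\ref{eqn:bound}) through (\ref{eqn:ineq})) avoids entirely.
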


\begin{proof}
	Consider the set \[
	\mathcal{G}_{r^*} =  \left\{  f \in    \tilde{\mathcal{I}}(L,W,S,B)\,:\,    \|f-f_n\|_{\ell_2}^2 \leq  (r^* )^2 \right\},
	\]
	and the define the event
	\[
	E    \,=\,\left\{     \underset{f \in   \mathcal{G}_{r^*}  }{\sup}  \|f-f_n\|_{n}^2 \leq   (2 r^*)^2 \right\}.
	\]
	If  $r^*$ satisfies (\ref{eqn:cond2})  with $\gamma= \log n$   then we have that  $\mathbb{P}(E) \geq  1-1/n$  by Lemma \ref{lem4}.
	
	Also,
	\begin{equation}
		\label{eqn:bound}
		\begin{array}{lll}
			\displaystyle  (r^*) ^2  & \leq &  \displaystyle 3\mathbb{E}\left(\underset{f \in  \tilde{ \mathcal{I}}(L,W,S,B),  \,\, \|f-f_n\|_{\ell_2}^2\leq (r^*)^2 }{\sup}\,  \frac{1}{n}\sum_{i=1}^{n}\xi_i ( f(x_i)-   f_n(x_i))^2    \right)   \\ 
			& \leq& \displaystyle 3\mathbb{E}\left(   \mathbb{E }  \left(  \underset{f \in   \tilde{\mathcal{I}}(L,W,S,B),  \,\,\|f-f_n\|_{n}^2\leq (2r^*)^2 }{\sup}\,  \frac{1}{n}\sum_{i=1}^{n}\xi_i (f(x_i)-   f_n(x_i)  )^2   \bigg|  x_1,\ldots,x_n\right)  1_E \right)   \,+\,\\
			& &    \displaystyle \frac{12 F^2 }{n}\\
			& \leq& \displaystyle 3\mathbb{E}\left(   \mathbb{E }  \left(  \underset{f \in   \tilde{\mathcal{I}}(L,W,S,B),  \,\,\|f-f_n\|_{n}^2\leq (2r^*)^2 }{\sup}\,  \frac{1}{n}\sum_{i=1}^{n}\xi_i (f(x_i)-   f_n(x_i)  )^2   \bigg|  x_1,\ldots,x_n\right)  1_E \right)   \,+\,\\
			& &    \displaystyle \frac{12 F^2 }{n}.\\  
			& & 
		\end{array}
	\end{equation}
	Next, let
	\[
	\mathcal{G} = \left\{    g \,:\,    g(x) =   (f(x) -f_n(x))^2  ,\,\,\,\text{for some}\,\,    f\in \tilde{\mathcal{I}}(L,W,S,B),\,\,\,\,  \|f-f_n\|_{n}   \leq  2r^*\right\}.
	\]
	Notice that  if  $g_1,g_2\in \mathcal{G}$ with  $g_j =  f_j -f_n$,  $j=1,2$, then
	\[
	\vert g_1(x) -g_2(x) \vert \, = \, \vert   f_1(x) -f_2(x) \vert\cdot \vert   f_1(x) +f_2(x)   - 2f_n(x)\vert   \,\leq\, 4F\| f_1-f_2 \|_{\infty}.
	\]
	Hence,  combining this with (\ref{eqn:bound}), using Dudley's chaining  we obtain that
	\begin{equation}
		\label{eqn:ineq}
		\begin{array}{lll}
			\displaystyle  (r^*) ^2  & \leq &  \displaystyle  3  \mathbb{E}\left( \underset{0 < \alpha < 2r^*  }{\inf} \left\{   4\alpha  +      \frac{12}{  \sqrt{n} }   \int_{\alpha}^{2 r^* } \sqrt{   \log  \mathcal{N}(     \delta, \mathcal{G}, \|\cdot\|_n  )   }  \right\}\right) +\frac{12 F }{n}\\
			& \leq& \displaystyle  3  \mathbb{E}\left( \underset{0 < \alpha < 2r^*  }{\inf} \left\{   4\alpha  +      \frac{12}{  \sqrt{n} }   \int_{\alpha}^{2 r^* } \sqrt{   \log  \mathcal{N}(     \delta, \mathcal{G}, \|\cdot\|_{\infty}  )   }  \right\}\right) +\frac{12 F }{n}\\
			&\leq  &    C\displaystyle \underset{0 < \alpha <  2 r^*  }{\inf} \left\{   \alpha  + r^*  \sqrt{\frac{N \left[(\log N)^2+    \log \alpha^{-1}\right] }{  n }        }\right\} +   \frac{12 F }{n},
		\end{array}
	\end{equation}
	
	where the last inequality follows   from  Theorem 2 in \cite{suzuki2018adaptivity}, and with  $N$  satisfying  $N  \asymp n^{  \frac{d}{2s+d} }$. Hence,  if $r^*$  satisfies   (\ref{eqn:cond2})  then  for a constant $\tilde{C}>0$
	\[
	r^*  \leq    \displaystyle  \tilde{C} \left[  \sqrt{   \frac{ N  (\log N)^2 }{n}  } +     \sqrt{\frac{N \left[(\log N)^2+ \log n\right] }{  n }        } \right] ,  
	\]
	which follows from  (\ref{eqn:ineq}) by taking  
	$$\alpha =  r^*  \sqrt{   \frac{ N  (\log N)^2 }{n}  }. $$ The claim follows.
\end{proof}

\subsection{Proof of Theorem  \ref{thm3} }
\label{sec:actual_proof}

\begin{proof}
	Throughout we use the notation from  the proof of Lemma \ref{lem5}.
	Then we proceed as in \cite{farrell2018deep}. Specifically,  we divide the space $\tilde{\mathcal{I}}(L,W,S,B)$ into  sets of increasing radius
	\[
	\text{B}(f_n, \|\cdot\|_{\ell_2},   \overline{r}  )  , \, \text{B}(f_n,  \|\cdot\|_{\ell_2},  2 \overline{r}  )  \backslash\text{B}(f_n,  \|\cdot\|_{\ell_2},   \overline{r}  ) ,\ldots,  \text{B}(f_n,  \|\cdot\|_{\ell_2},  2^l \overline{r}  )  \backslash\text{B}(f_n,  \|\cdot\|_{\ell_2},   2^{l-1}\overline{r}  ), 
	\]
	where \[
	l  = \floor[\bigg]{  \log_2\left(    \frac{2F}{  \sqrt{(\log n )/n }   }  \right)  }.
	\]
	Next, if $\overline{r} >r^*$, the  by Lemma \ref{lem4},  with probability at least  $1-le^{-\gamma}$, we have that 
	\[
	\|f-f_n\|_{\ell_2}^2 \leq   2^j \overline{r}     \,\,\,\text{implies}\,\,\, \|f-f_n\|_{n}^2 \leq   2^{j+1} \overline{r}. 
	\]
	Then if  for some $j\leq  l$  it holds that
	\[
	\hat{f} \in  \text{B}(f_n,  \|\cdot\|_{\ell_2},  2^j \overline{r}  )  \backslash\text{B}(f_n,  \|\cdot\|_{\ell_2},   2^{j-1}\overline{r}  ),
	\]
	then by Lemma \ref{lem5}, with probability at least  $1-4e^{-\gamma}$, we have that 
	\[
	\begin{array}{lll}
		\|\hat{f}-f_n \|_{\ell_2}^2   & \leq& \tilde{C}\bigg[   2^j \overline{r}    F^{2.5} \sqrt{   \frac{ \gamma}{n} } +  \frac{ F^{2.5}   \gamma  }{n}  + \\
		& &   \cdot 2^j \overline{r} F\sqrt{   \frac{ N  (\log N)^2 }{n}  } +       \cdot2^j \overline{r}F\sqrt{\frac{N \left[(\log N)^2+ 2\log n\right] }{  n }        }    +     2^{j}   \overline{r}  N^{-s/d}  F^{1.5}\bigg]\\
		&\leq&2^{2j-2}  \overline{r}^2,
	\end{array}
	\]
	provided that 
	\[
	\tilde{C}\left[    F^{2.5} \sqrt{   \frac{ \gamma}{n} } + F \sqrt{   \frac{ N  (\log N)^2 }{n}  }   +  F\sqrt{\frac{N \left[(\log N)^2+ 2\log n\right] }{  n } }  + N^{-s/d} F^{1.5}\right] \leq    \frac{1}{8}   2^j\overline{r},
	\]
	and 
	\[
	\tilde{C}\frac{ F^{2.5}   \gamma  }{n}  \leq  \frac{1}{4}   2^{2j} \overline{r}^2,
	\]
	both of	which  for all  $j$  hold if 
	\begin{equation}
		\label{eqn:upper_bound}
		\overline{r}  =   	 	 8  \tilde{C}\left[    F^{2.5} \sqrt{   \frac{\gamma}{n} } + D\sqrt{   \frac{ N  (\log N)^2 }{n}  }   + D\sqrt{\frac{N \left[(\log N)^2+ 2\log n\right] }{  n } }  + N^{-s/d} F^{1.5}\right]  +   2\sqrt{	\frac{\tilde{C} F^{2.5}   \gamma  }{n}  }  + r^*.
	\end{equation}
	Therefore,  by Lemmas \ref{lem4}--\ref{lem5}, with probability at least  $1-e^{-\gamma}$, we have that 
	\[
	\|\hat{f}-f_n\|_{\ell_2}\leq  2^l \overline{r},\,\,\,\,\,\,\,\,\,\,\text{and}\,\,\,\,\,\,\,\,\,\,\|\hat{f}-f_n\|_{n}\leq  2^{l+1} \overline{r},
	\]
	which  by the previous argument implies that
	\[
	\|\hat{f}-f_n\|_{\ell_2}\leq  2^{l-1} \overline{r},\,\,\,\,\,\,\,\,\,\,\text{and}\,\,\,\,\,\,\,\,\,\, \|\hat{f}-f_n\|_{n}\leq  2^{l} \overline{r},
	\]
	and continuing recursively we arrive at 
	\[
	\|\hat{f}-f_n\|_{\ell_2}\leq  \overline{r},\,\,\,\,\,\,\,\,\,\,\text{and}\,\,\,\,\,\,\,\,\,\, \|\hat{f}-f_n\|_{n}\leq  2\overline{r}.
	\]
	The claim follows by noticing that Lemma \ref{lem6} and (\ref{eqn:upper_bound}) imply that
	\[
	\begin{array}{lll}
		\overline{r}   & \leq&    	 	  8  \tilde{C}\left[    F^{2.5} \sqrt{   \frac{\gamma}{n} } +F \sqrt{   \frac{ N  (\log N)^2 }{n}  }   + F\sqrt{\frac{N \left[(\log N)^2+ 2\log n\right] }{  n } }  + N^{-s/d} F^{1.5}\right]  +   2\sqrt{	\frac{\tilde{C} F^{2.5}   \gamma  }{n}  }  + \\
		& &  \displaystyle    \tilde{C}\left[  \sqrt{   \frac{ N  (\log N)^2 }{n}  } +      \sqrt{\frac{N \left[(\log N)^2+ \log n\right] }{  n }        } \right],  
	\end{array}
	\]  
	and the claim follows since  $\|f_n-f_{\tau}^*\|_{\infty} \leq N^{-s/d}.$
\end{proof}


\section{Proof of Theorem \ref{thm5} }

The proof is similar  to that of Theorem \ref{thm3}   relying in the lemmas  given next.

Just as in Lemma \ref{lem2}, we have the following result.

\begin{lemma}
	\label{lem7}
	Suppose  that  $\|  f_n -f_{\tau}^*\|_{\infty} \leq  c$ for a small enough constant $c$. Then
	\[
	\Delta^2(f,f_n)  \leq   \frac{1}{c_{\tau}} \left[\mathbb{E}\left(   \rho_{\tau}(Y- f(X))  -  \rho_{\tau}(Y- f_n(X))    \right)+    \| f_n-f_{\tau}^*\|_{\infty}  \Delta(f,f_n ) \sqrt{F  }    \right],
	\]
	and
	\[
	\|f-f_n\|_{\ell_2}^2  \leq   \frac{2F}{c_{\tau}} \left[\mathbb{E}\left(   \rho_{\tau}(Y- f(X))  -  \rho_{\tau}(Y- f_n(X))    \right)+    \| f_n-f_{\tau}^*\|_{\infty}  \|f-f_n\|_{\ell_2}   \sqrt{F  }    \right],
	\]
	for any $f \in 	\mathcal{G}(L,p,S,F) $, and for some constant $c_{\tau}$.
\end{lemma}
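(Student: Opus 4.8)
The plan is to reproduce, essentially line for line, the argument behind Lemma \ref{lem2}. Inspecting that proof shows that the only structural facts it uses about the function class are (i) every member satisfies $\|f\|_{\infty}\leq F$ and (ii) $f_n$ is a $\|\cdot\|_{\infty}$-projection of $f_{\tau}^*$ onto the class; both hold here, since $\mathcal{G}(L,p,S,F)$ enforces $\|f\|_{\infty}\leq F$ by Definition \ref{def8} and $f_n$ was defined after (\ref{eqn:estimator}) as the $\|\cdot\|_{\infty}$-projection of $f_{\tau}^*$. First I would invoke the pointwise identity B.3 of \cite{belloni20111}, writing $\rho_{\tau}(Y-f(X))-\rho_{\tau}(Y-f_n(X))$ as the sum of the score term $-(f(X)-f_n(X))(\tau-\mathbf{1}\{Y\leq f_n(X)\})$ and the integral remainder $\int_{0}^{f(X)-f_n(X)}[\mathbf{1}\{Y\leq f_n(X)+z\}-\mathbf{1}\{Y\leq f_n(X)\}]\,dz$; then I would re-center the indicator at $f_{\tau}^*(X)$ so that the leading term becomes $-(f(X)-f_n(X))(\tau-\mathbf{1}\{Y\leq f_{\tau}^*(X)\})$, which has conditional mean zero given $X$ by the definition of the conditional $\tau$-quantile.

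Next I would take expectations and use Fubini. The re-centered score term drops out. The correction term $-(f(X)-f_n(X))(\mathbf{1}\{Y\leq f_{\tau}^*(X)\}-\mathbf{1}\{Y\leq f_n(X)\})$ is bounded in absolute value by $c\,|f(X)-f_n(X)|\cdot|f_{\tau}^*(X)-f_n(X)|$, using the a.s.\ upper bound $c$ on the conditional density from Assumption \ref{as2}. The integral remainder is bounded below by $c_{\tau}\,\mathbb{E}[D^2(f(X)-f_n(X))]=c_{\tau}\,\Delta^2(f,f_n)$ exactly as in Lemma 8 of \cite{padilla2020adaptive}, because the conditional density of $Y$ exceeds $\underline{p}$ on an $L$-neighborhood of $f_{\tau}^*(X)$ (Assumption \ref{as2}) and $\|f_n-f_{\tau}^*\|_{\infty}\leq c$ is small. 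Collecting the three contributions gives
\[
\mathbb{E}\!\left(\rho_{\tau}(Y-f(X))-\rho_{\tau}(Y-f_n(X))\right)\;\geq\;c_{\tau}\,\Delta^2(f,f_n)\;-\;c\,\mathbb{E}\!\left[\,|f(X)-f_n(X)|\cdot|f_{\tau}^*(X)-f_n(X)|\,\right].
\]

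Finally I would control the last term by Cauchy--Schwarz, $\mathbb{E}[|f(X)-f_n(X)|\cdot|f_{\tau}^*(X)-f_n(X)|]\leq\|f_n-f_{\tau}^*\|_{\infty}\,\|f-f_n\|_{\ell_2}$, and then use $\|f\|_{\infty},\|f_n\|_{\infty}\leq F$ together with the elementary bound $t^2\leq 2F\,D^2(t)$ valid for $|t|\leq 2F$ (assuming, without loss of generality, $F\geq 1/2$) to obtain $\|f-f_n\|_{\ell_2}^2\leq 2F\,\Delta^2(f,f_n)$, hence $\|f-f_n\|_{\ell_2}\leq\sqrt{2F}\,\Delta(f,f_n)$. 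Substituting this into the displayed inequality and dividing through by $c_{\tau}$ (absorbing the numerical constant into $c_{\tau}$) yields the first claimed bound; feeding that bound back into $\|f-f_n\|_{\ell_2}^2\leq 2F\,\Delta^2(f,f_n)$ yields the second. I do not expect a genuine obstacle here, since the reasoning is identical to that of Lemma \ref{lem2}; the only step needing care is verifying that Assumption \ref{as2} supplies the uniform quadratic minorant of the integral remainder, which is precisely the content of Lemma 8 of \cite{padilla2020adaptive}.
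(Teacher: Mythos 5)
Your proposal is correct and matches the paper's treatment: the paper proves Lemma \ref{lem7} simply by noting it follows "just as in Lemma \ref{lem2}," and your argument is exactly that proof — the Knight/Belloni identity B.3, re-centering the score at $f_{\tau}^*$, the bounded-density control of the cross term, the $c_{\tau}D^2$ minorant from Lemma 8 of \cite{padilla2020adaptive}, Cauchy--Schwarz with $\|f_n-f_{\tau}^*\|_{\infty}$, and the comparison $\|f-f_n\|_{\ell_2}^2\lesssim F\,\Delta^2(f,f_n)$ — together with the correct observation that the only class-specific inputs are $\|f\|_{\infty}\leq F$ and the sup-norm projection property of $f_n$, both of which hold for $\mathcal{G}(L,p,S,F)$.
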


Similarly to Lemmas \ref{lem4}--\ref{lem6}, we have the following three lemmas.

\begin{lemma}
	\label{lem8} 	Suppose  that  $\|  f_n -f_{\tau}^*\|_{\infty} \leq  c$ for a small enough constant $c$.  The estimator  $\hat{f}$  satisfies
	\[
	\Delta^2(\hat{f},f_n)  \leq    \frac{1}{c_{\tau}}\left[M_n(\hat{f})  -  \hat{M}_{n}(\hat{f})     +     \| f_n-f_{\tau}^*\|_{\infty}  \Delta(\hat{f},f_n ) \sqrt{F  }  \right].
	\]
	Furthermore,
	\[
	\|\hat{f}-f_n\|_{\ell_2}^2  \leq    \frac{2F}{c_{\tau}}\left[M_n(\hat{f})  -  \hat{M}_{n}(\hat{f})     +     \| f_n-f_{\tau}^*\|_{\infty}\|\hat{f}-f_n\|_{\ell_2} \sqrt{F  }  \right].
	\]
\end{lemma}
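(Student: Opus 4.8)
\textbf{Proof proposal for Lemma \ref{lem8}.} The plan is to transcribe the proof of Lemma \ref{lem3}, with Lemma \ref{lem7} taking the place of Lemma \ref{lem2}. First I would set up the same excess-risk functionals as in Definition \ref{def7}, adapted to the present class: with $f_n$ the $\|\cdot\|_\infty$-projection of $f_\tau^*$ onto $\mathcal{G}(L,p,S,F)$ (intersected with $\{\|f\|_\infty\le F\}$), define $\hat{M}_{n,i}(f)=\frac1n[\rho_\tau(y_i-f(x_i))-\rho_\tau(y_i-f_n(x_i))]$, $\hat{M}_n(f)=\sum_{i=1}^n\hat{M}_{n,i}(f)$, and $M_n(f)=\mathbb{E}[\rho_\tau(Y-f(X))-\rho_\tau(Y-f_n(X))]$. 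Note that Lemma \ref{lem7} is applicable to $f=\hat f$ since $\hat f\in\mathcal{G}(L,p,S,F)$ with $\|\hat f\|_\infty\le F$ by construction, and its hypothesis $\|f_n-f_\tau^*\|_\infty\le c$ is guaranteed once $L,S,p$ are chosen so that the approximation theory of \cite{schmidt2017nonparametric} yields a sufficiently accurate network approximant of $f_\tau^*$.

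Applying the first inequality of Lemma \ref{lem7} with $f=\hat f$ gives
\[
\Delta^2(\hat f,f_n)\;\le\;\frac1{c_\tau}\Big[\,\mathbb{E}\big(\rho_\tau(Y-\hat f(X))-\rho_\tau(Y-f_n(X))\big)+\|f_n-f_\tau^*\|_\infty\,\Delta(\hat f,f_n)\sqrt{F}\,\Big].
\]
The expectation term is exactly $M_n(\hat f)$, which I would rewrite as $M_n(\hat f)=\big(M_n(\hat f)-\hat M_n(\hat f)\big)+\hat M_n(\hat f)$. The key step is then the optimality of $\hat f$: because $\hat f$ minimizes $\sum_{i=1}^n\rho_\tau(y_i-f(x_i))$ over the feasible set and $f_n$ is feasible (it lies in $\mathcal{G}(L,p,S,F)$ and satisfies $\|f_n\|_\infty\le F$ by Assumption \ref{as5}, since $\|f_n-f_\tau^*\|_\infty$ is small and $\|f_\tau^*\|_\infty\le F$ — or one takes $F$ slightly larger), we have $\sum_i\rho_\tau(y_i-\hat f(x_i))\le\sum_i\rho_\tau(y_i-f_n(x_i))$, i.e. $\hat M_n(\hat f)\le 0$. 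Hence $M_n(\hat f)\le M_n(\hat f)-\hat M_n(\hat f)$, which substituted above yields the first claimed bound. The $\ell_2$ statement follows verbatim from the second inequality of Lemma \ref{lem7} and the same manipulation.

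The only point requiring care — and it is mild rather than a genuine obstacle — is the feasibility bookkeeping: one must ensure that $\hat f$ and $f_n$ live in \emph{exactly} the same constraint set so that $\hat M_n(\hat f)\le0$ is legitimate, and that the small-bias hypothesis of Lemma \ref{lem7} holds, which is precisely where the choice of $L$, $S$, and the width vector $p$ in Theorem \ref{thm5} enters. Beyond that, the argument is purely algebraic, exactly parallel to Lemma \ref{lem3}, and the bound feeds into the subsequent chaining/localization lemmas (the analogues of Lemmas \ref{lem4}--\ref{lem6}) in the same way.
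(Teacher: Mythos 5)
Your proposal is correct and follows exactly the route the paper intends: the paper gives no separate proof of Lemma \ref{lem8}, deferring to the argument of Lemma \ref{lem3}, which is precisely your argument --- apply Lemma \ref{lem7} to $\hat f$, decompose $M_n(\hat f)=\bigl(M_n(\hat f)-\hat M_n(\hat f)\bigr)+\hat M_n(\hat f)$, and drop $\hat M_n(\hat f)\le 0$ by optimality of $\hat f$ over $\mathcal{G}(L,p,S,F)$, which already contains $f_n$ since the sup-norm bound $\|f\|_\infty\le F$ is built into that class. Your feasibility and small-bias remarks match the paper's handling (the bias condition is indeed supplied by the approximation theory invoked in Theorem \ref{thm5}), so nothing is missing.
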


\begin{lemma}
	\label{lem9}
	If
	\begin{equation}
		\label{eqn:cond3}
		3\mathbb{E}\left(\underset{f \in  \mathcal{G}(L,p,S,F),  \,\, \|f-f_n \|_{\ell_2}   \leq r }{\sup}\,  \frac{1}{n}\sum_{i=1}^{n}\xi_i( f(X_i)-   f_n(X_i))^2    \right) \leq  r^2, 
	\end{equation}
	for  $\{\xi_i\}_{i=1}^n$  Rademacher variables independent of $\{(x_i,y_i)\}_{i=1}^n$, 	and 
	\begin{equation}
		\label{eqn:cond4}
		\max\left\{ 4F\sqrt{   \frac{ \gamma}{n} } , 4F   \sqrt{\frac{\gamma }{3n}   }\right\}  \leq r.
	\end{equation}
	Then with probability at least  $1- e^{-\gamma}$,  
	$\|f-f_n\|_{\ell_2}^2 \leq   r^2$  with  $f  \in \mathcal{G}(L,p,S,F)$ implies
	\[
	\|f-f_n\|_n^2 \,\leq  \,  (2r)^2.
	\]
\end{lemma}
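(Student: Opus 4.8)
The plan is to follow the proof of Lemma~\ref{lem4} essentially verbatim, the only change being that the class $\tilde{\mathcal{I}}(L,W,S,B)$ is replaced by $\mathcal{G}(L,p,S,F)$, which is again uniformly bounded by $F$. First I would record the two elementary estimates about the localized collection $\{(f-f_n)^2 : f\in\mathcal{G}(L,p,S,F),\ \|f-f_n\|_{\ell_2}\le r\}$. Since $\|f\|_{\infty}\le F$ for every $f\in\mathcal{G}(L,p,S,F)$ and $\|f_n\|_{\infty}\le F$ as well, we have $|(f(x)-f_n(x))^2|\le 2(F^2+\|f_n\|_{\infty}^2)\le 4F^2$ for all $x$, hence
\[
\mathbb{E}\left((f(X)-f_n(X))^4\right)\;\le\;2(F^2+\|f_n\|_{\infty}^2)\,\mathbb{E}\left((f(X)-f_n(X))^2\right)\;\le\;4F^2\|f-f_n\|_{\ell_2}^2 .
\]

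Next I would apply Theorem~2.1 of \cite{bartlett2005local} to this localized class, exactly as in the proof of Lemma~\ref{lem4}: its members are bounded by $4F^2$ and, by the display above, have second moment at most $4F^2r^2$ on $\{\|f-f_n\|_{\ell_2}\le r\}$, so with probability at least $1-e^{-\gamma}$,
\[
\underset{f\in\mathcal{G}(L,p,S,F),\,\|f-f_n\|_{\ell_2}^2\le r^2}{\sup}\left\{\|f-f_n\|_n^2-\|f-f_n\|_{\ell_2}^2\right\}\;\le\;3\,\mathbb{E}\left(\underset{f\in\mathcal{G}(L,p,S,F),\,\|f-f_n\|_{\ell_2}^2\le r^2}{\sup}\,\frac{1}{n}\sum_{i=1}^{n}\xi_i\left(f(x_i)-f_n(x_i)\right)^2\right)+4rF\sqrt{\frac{\gamma}{n}}+\frac{16F^2\gamma}{3n}.
\]

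Finally I would substitute the two hypotheses of the lemma. Condition~(\ref{eqn:cond3}) bounds the expected localized Rademacher term by $r^2/3$, so three times this quantity is $\le r^2$; condition~(\ref{eqn:cond4}) gives $4rF\sqrt{\gamma/n}\le r^2$ (using $4F\sqrt{\gamma/n}\le r$) and $16F^2\gamma/(3n)\le r^2$ (using $4F\sqrt{\gamma/(3n)}\le r$). Therefore, on the same event of probability at least $1-e^{-\gamma}$, every $f\in\mathcal{G}(L,p,S,F)$ with $\|f-f_n\|_{\ell_2}^2\le r^2$ satisfies $\|f-f_n\|_n^2\le\|f-f_n\|_{\ell_2}^2+3r^2\le 4r^2=(2r)^2$, which is the claim. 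I do not expect a genuine obstacle here: the only delicate point is the specialization of Theorem~2.1 of \cite{bartlett2005local} (the choice of the sub-root function controlling the fixed point and the bookkeeping of constants), and this is carried out in identical fashion in the proof of Lemma~\ref{lem4}, so it can simply be cited.
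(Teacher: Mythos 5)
Your proposal is correct and matches the paper's argument: the paper proves Lemma \ref{lem9} by noting it follows exactly as Lemma \ref{lem4} (boundedness and fourth-moment control of the localized class, Theorem 2.1 of \cite{bartlett2005local}, then substitution of conditions (\ref{eqn:cond3})--(\ref{eqn:cond4})), which is precisely what you do, with the final bookkeeping $\|f-f_n\|_n^2\le r^2+3r^2=(2r)^2$ spelled out explicitly.
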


\begin{lemma}
	\label{lem10}
	Suppose that $\| \hat{f}-f_n\|     \leq  r_0$, with  $r_0$  satisfying (\ref{eqn:cond3})-(\ref{eqn:cond4}) and Assumption \ref{as5} holds. Also, with the notation of Definition \ref{def10}, suppose that  for the class $	\mathcal{G}(L,p,S,F) $  the parameters are chosen to satisfy 
	\[
	\begin{array}{l}
		\sum_{i=0}^{q} \log_2\left(   4\max\{t_i,\beta_i\}  \right)\log_2(n)\leq  L     \lesssim  n \epsilon_n,\,\,\,\,\,\,\, \max\{1,K\}\leq F,\\
		n\epsilon_n       \lesssim      \underset{i=1,\ldots,L}{\min} p_i,\,\,\,\,\,\, S\asymp n \epsilon_n \log n,\,\,\,\,\,\,    \underset{i=1,\ldots,L}{\max} p_i  \lesssim n,
	\end{array}
	\]
	where
	\[
	\epsilon_n \,=\,  \underset{i=0,1,\ldots,q}{\max}   n^{   -  \frac{2\beta_i^*}{2\beta_i^*    +t_i  }  }.
	\]
	Then 
	\[
	\begin{array}{lll}
		\,\|\hat{f}-f_n\|_{\ell_2}^2     	  & \lesssim&\displaystyle  r_0 \sqrt{F\epsilon_n   \log n    \left[\log L  +  L\log n    \right] }+r_0 \sqrt{F\epsilon_n   \log n    \left[\log L  +  L\log n+\log \left(n\right)    \right] }\\ 
		& & \displaystyle  +   r_0F^2\sqrt{   \frac{ \gamma}{n} }  +  \frac{ F^2   \gamma  }{n}+     r_0 F \underset{i=0,1,\ldots,q}{\max}   n^{   -  \frac{\beta_i^*}{2\beta_i^*    +t_i  }  }\\
	\end{array}
	\]
	with probability at least $1-2\exp(-\gamma)$.
\end{lemma}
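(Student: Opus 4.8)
The plan is to follow the template of the proof of Lemma \ref{lem5}, substituting the Besov-space ingredients (Theorem~2 of \cite{suzuki2018adaptivity}) with the corresponding approximation and metric-entropy results for the composition-of-H\"older class from \cite{schmidt2017nonparametric}. First I would apply Lemma \ref{lem8} to obtain the basic inequality
\[
c_\tau\,\|\hat{f}-f_n\|_{\ell_2}^2 \le 2F\big[ M_n(\hat{f})-\hat{M}_n(\hat{f})+\|f_n-f_\tau^*\|_\infty\,\|\hat{f}-f_n\|_{\ell_2}\sqrt{F}\big],
\]
which reduces matters to (i) controlling the empirical process $\sup_{g\in\cG}\{\E\,g(X,Y)-\frac{1}{n}\sum_i g(x_i,y_i)\}$ over the class $\cG=\{(x,y)\mapsto\rho_\tau(y-f(x))-\rho_\tau(y-f_n(x)):f\in\cG(L,p,S,F),\ \|f-f_n\|_{\ell_2}\le r_0\}$, and (ii) bounding the approximation error $\|f_n-f_\tau^*\|_\infty$.

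For (i) I would invoke Theorem~2.1 of \cite{bartlett2005local} (a localized Talagrand bound), using that every $g\in\cG$ is bounded by $2F$ in sup norm and, by the $1$-Lipschitz property of $\rho_\tau$, has second moment $\E\,g^2\le\|f-f_n\|_{\ell_2}^2\le r_0^2$; this produces, with probability at least $1-e^{-\gamma}$, a bound of the form $12F\,\E_\xi\sup_{g\in\cG}\frac{1}{n}\sum_i\xi_i g(x_i,y_i)+O\!\big(r_0F^{2.5}\sqrt{\gamma/n}+F^{2.5}\gamma/n\big)$. The Rademacher term is then reduced by the contraction inequality to $\E_\xi\sup_f\frac{1}{n}\sum_i\xi_i(f(x_i)-f_n(x_i))$; applying Lemma \ref{lem9} to replace the $\|\cdot\|_{\ell_2}$-ball by a $\|\cdot\|_n$-ball of radius $2r_0$ and then Dudley's chaining turns this into $\inf_{0<\alpha<2r_0}\{4\alpha+\frac{24r_0}{\sqrt n}\sqrt{\log\mathcal{N}(\alpha,\cG(L,p,S,F),\|\cdot\|_\infty)}\}$.

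The two inputs from \cite{schmidt2017nonparametric} close the argument. Their covering-number bound for sparse ReLU networks gives $\log\mathcal{N}(\alpha,\cG(L,p,S,F),\|\cdot\|_\infty)\lesssim S\log\big(\alpha^{-1}(L+1)\prod_{j}(p_j+1)\big)$; inserting $S\asymp n\epsilon_n\log n$, $L\lesssim n\epsilon_n$ and $\max_j p_j\lesssim n$ yields $\log\mathcal{N}\lesssim n\epsilon_n\log n\,[\log\alpha^{-1}+L\log n+\log L]$, so the chaining integral is $\lesssim r_0\sqrt{\epsilon_n\log n\,[\log\alpha^{-1}+L\log n+\log L]}$. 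Their approximation theorem (Theorem~5 of \cite{schmidt2017nonparametric}), under precisely the stated choices of $L$, $S$ and the $p_i$, supplies a network in $\cG(L,p,S,F)$ at $\|\cdot\|_\infty$-distance at most $C\max_i n^{-\beta_i^*/(2\beta_i^*+t_i)}=C\sqrt{\epsilon_n}$ from $f_\tau^*$, hence $\|f_n-f_\tau^*\|_\infty\lesssim\max_i n^{-\beta_i^*/(2\beta_i^*+t_i)}$. Choosing $\alpha\asymp r_0\sqrt{F\epsilon_n\log n\,[L\log n+\log L]}$ to balance the two contributions of the chaining bound, and feeding everything back into the basic inequality from Lemma \ref{lem8} (the cross term $\|f_n-f_\tau^*\|_\infty\|\hat f-f_n\|_{\ell_2}\sqrt F$ producing the final $r_0F\max_i n^{-\beta_i^*/(2\beta_i^*+t_i)}$ summand), gives the claimed estimate with probability at least $1-2e^{-\gamma}$.

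The main obstacle I anticipate is bookkeeping rather than new ideas: the class $\cG(L,p,S,F)$ has layer-dependent widths (unlike the fixed-width $\cI(L,W,S,B)$ of the Besov case), so one must carefully translate the metric-entropy and approximation bounds of \cite{schmidt2017nonparametric} into the present notation, keeping explicit track of how $L$, $S$ and $\max_i p_i$ enter, and verify that the stated parameter regime---particularly the depth condition $\sum_{i=0}^q\log_2(4\max\{t_i,\beta_i\})\log_2 n\le L$ and the sparsity $S\asymp n\epsilon_n\log n$---indeed makes the approximation theorem applicable. The correct power of $F$ in the Talagrand step also requires some care. Apart from this, the argument is a direct adaptation of the one already carried out for Lemma \ref{lem5}.
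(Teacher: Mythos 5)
Your proposal follows essentially the same route as the paper's proof: the basic inequality from Lemma \ref{lem8}, the localized Talagrand bound of \cite{bartlett2005local} plus contraction, Lemma \ref{lem9} and Dudley chaining to reduce to the sup-norm covering number of $\mathcal{G}(L,p,S,F)$ (Lemma 5 of \cite{schmidt2017nonparametric}, giving $\log\mathcal{N}\lesssim (S+1)\log(\alpha^{-1}(L+1)V^2)$ with $V=\prod_l(p_l+1)$), and the approximation bound $\|f_n-f_\tau^*\|_\infty\lesssim\max_i n^{-\beta_i^*/(2\beta_i^*+t_i)}$ from the proof of Theorem 1 in \cite{schmidt2017nonparametric}, with the stated choice of $\alpha$ and the parameter regime used exactly as you describe. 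The argument is correct and matches the paper's proof in structure and in all key ingredients.
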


\begin{proof}
	Let\[
	\mathcal{G} = \left\{    g \,:\,    g(x,y) = \rho_{\tau}(y-f(x)) - \rho_{\tau}(y-f_n(x))  ,\,\,\,    f\in \mathcal{G}(L,p,S,F) ,\,\,\,\,  \|f-f_n\|   \leq  r_0\right\}.
	\]
	Prooceding as in the proof of Lemma \ref{lem5},  we obtain that
	
	\begin{equation}
		\label{eqn:entropy2}
		\begin{array}{lll}
			\displaystyle  \mathbb{E}_{\xi} \left(    \underset{g \in    \mathcal{G} }{\sup} \,    \frac{1}{n} \sum_{i=1}^{n}\xi_i  g(x_i,y_i)   \right)
			& \leq&  \displaystyle     C \underset{0 < \alpha <  r_0  }{\inf} \left\{   4\alpha  +      \frac{12}{  \sqrt{n} }   \int_{\alpha}^{2  r_0 } \sqrt{   \log  \mathcal{N}(     \delta, \mathcal{G}(L,p,S,F) , \|\cdot\|_n  )   } d\delta \right\},\\
		\end{array}
	\end{equation}
	for some positive constant $C$.
	
	However, defining $V = \prod_{l=0}^{L+1} (p_l+1)$, then Lemma  5 in   \cite{schmidt2017nonparametric} and  (\ref{eqn:entropy2}) imply that 
	\begin{equation}
		\label{eqn:entropy3}
		\begin{array}{lll}
			\displaystyle  \mathbb{E}_{\xi} \left(    \underset{g \in    \mathcal{G} }{\sup} \,    \frac{1}{n} \sum_{i=1}^{n}\xi_i  g(x_i,y_i)   \right)    & \leq &\displaystyle    C \underset{0 < \alpha <  r_0  }{\inf} \left\{   4\alpha  +      \frac{12}{  \sqrt{n} }   \int_{\alpha}^{2   r_0 } \sqrt{   \log  \mathcal{N}(     \delta, \mathcal{G}(L,p,S,F) , \|\cdot\|_{\infty} )   } d\delta \right\}\\
			& \leq&4C\alpha    +    \frac{24 Cr_0}{\sqrt{n}}  \sqrt{      (S+1)    \log  \left(   2   \alpha^{-1}(L+1)V^2 \right)   },
		\end{array}
	\end{equation}
	where  $V = \prod_{l=0}^{L+1}(p_l+1)$. Therefore,  setting   
	\[
	\alpha =   r_0\sqrt{\frac{(S+1)\log ((L+1)  V^2)}{n}},
	\]
	(\ref{eqn:entropy3}) implies
	\[
	\displaystyle  \mathbb{E}_{\xi} \left(    \underset{g \in    \mathcal{G} }{\sup} \,    \frac{1}{n} \sum_{i=1}^{n}\xi_i  g(x_i,y_i)   \right)    \,\leq \,	4Cr_0\sqrt{\frac{(S+1)\log ((L+1)  V^2)}{n}}   +    \frac{24  r_0 C}{\sqrt{n}}  \sqrt{      (S+1)    \log  \left(   2(L+1)V^2    n\right)   }.
	\]
	Hence, as in  (\ref{eqn:e1}), we obtain that
	\begin{equation}
		\label{eqn:e2}
		\begin{array}{lll}
			\,	\|\hat{f}-f_n\|_{\ell_2}^2  &\leq &  \displaystyle \frac{  2\sqrt{F} }{	c_{\tau}}\bigg[	24r_0C\sqrt{\frac{F(S+1)\log ((L+1)  V^2)}{n}}   +    \frac{144  \sqrt{F}r_0C}{\sqrt{n}}  \sqrt{      (S+1)    \log  \left(   2(L+1)V^2    n \right)   }+\\
			& &   \displaystyle  +    4 r_0F^2\sqrt{   \frac{\gamma}{n} } +  \frac{100 F^2   \gamma  }{3n} +     \| f_n-f_{\tau}^*\|_{\infty} \|\hat{f}-f_n\|_{\ell_2}  F
			\bigg],\\
		\end{array}\, 
	\end{equation}
	with probability at least $1-e^{-\gamma}$.
	
	Furthermore, by Equation (26) in the proof of Theorem 1 in \cite{schmidt2017nonparametric} and the argument therein, we have that 
	\begin{equation}
		\label{eqn:linity}
		\|f_n -f_{\tau}^*\|_{\infty}    \,\leq \,  C^{\prime}   \underset{i=0,1,\ldots,q}{\max}     c^{ -\frac{\beta_i^*}{t_i}   } n^{   -  \frac{\beta_i^*}{2\beta_i^*    +t_i  }  },		
	\end{equation}
	for  positive  constants $c$  and  $C^{\prime}$. Hence,  combining (\ref{eqn:e2}) with (\ref{eqn:linity}) we arrive at
	\[
	\begin{array}{lll}
		\,	\|\hat{f}-f_n\|_{\ell_2}^2  &\leq &   \displaystyle \frac{  s\sqrt{F} }{	c_{\tau}}\bigg[	24r_0C\sqrt{\frac{F(S+1)\log ((L+1)  V^2)}{n}}   +    \frac{144  \sqrt{F}r_0C}{\sqrt{n}}  \sqrt{      (S+1)    \log  \left(   2(L+1)V^2    n \right)   }+\\
		& &   \displaystyle  +    4 r_0F^2\sqrt{   \frac{\gamma}{n} } +  \frac{100 F^3   \gamma  }{3n} +    r_0  F\,  C^{\prime}   \underset{i=0,1,\ldots,q}{\max}     c^{ -\frac{2\beta_i^*}{t_i}   } n^{   -  \frac{2\beta_i^*}{2\beta_i^*    +t_i  }  }
		\bigg],\\
		& \lesssim&\displaystyle  r_0 \sqrt{F\epsilon_n   \log n    \left[\log L  +  \log V    \right] }+r_0 \sqrt{F\epsilon_n   \log n    \left[\log L  +  \log V+\log \left(n\right)    \right] }\\
		& & \displaystyle  +   r_0F^2\sqrt{   \frac{ \gamma}{n} }  +  \frac{ F^2  \gamma  }{n}+     r_0  F  \underset{i=0,1,\ldots,q}{\max}   n^{   -  \frac{\beta_i^*}{2\beta_i^*    +t_i  }  }\\
		& \lesssim&\displaystyle  r_0 \sqrt{F\epsilon_n   \log n    \left[\log L  +  L\log n    \right] }+r_0 \sqrt{F\epsilon_n   \log n    \left[\log L  +  L\log n+\log \left(n\right)    \right] }\\ 
		& & \displaystyle  +   r_0F^2\sqrt{   \frac{ \gamma}{n} }  +  \frac{ F^2   \gamma  }{n}+     r_0 F \underset{i=0,1,\ldots,q}{\max}   n^{   -  \frac{\beta_i^*}{2\beta_i^*    +t_i  }  }\\
	\end{array}\, 
	\]
	where in the last four inequalities we have used the choice of the network parameters.
\end{proof}

\begin{lemma}
	\label{lem11}
	Let $r^*$ be defined as
	\[
	r^* = \inf \left\{ r>0\,:\,  	3\mathbb{E}\left(\underset{f \in   \mathcal{G}(L,p,S,F) ,  \,\,  \|f-f_n\|_{\ell_2}\leq s }{\sup}\,  \frac{1}{n}\sum_{i=1}^{n}\xi_i ( f(x_i)-  f_n(x_i))^2   \right)   < s^2 ,\,\,\forall s \geq r   \right\},
	\]
	for  $\{\xi_i\}_{i=1}^n$  Rademacher variables independent of $\{(x_i,y_i)_{i=1}^n\}$.
	Then  under the conditions of Lemma \ref{lem10},
	\[
	r^*  \leq    \displaystyle    \tilde{C}      \epsilon_n L \log^2 n, 
	\]
	for a constant $\tilde{C}>0$. 
\end{lemma}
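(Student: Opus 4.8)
The plan is to closely mirror the proof of Lemma~\ref{lem6}, with two substitutions: Lemma~\ref{lem9} plays the role of Lemma~\ref{lem4}, and the covering-number estimate for $\mathcal{G}(L,p,S,F)$ already used in the proof of Lemma~\ref{lem10} (Lemma~5 of \cite{schmidt2017nonparametric}) replaces Theorem~2 of \cite{suzuki2018adaptivity}. First I would fix $\gamma=\log n$, set $\mathcal{G}_{r^*}=\{f\in\mathcal{G}(L,p,S,F):\|f-f_n\|_{\ell_2}^2\le(r^*)^2\}$, and introduce the event $E=\{\sup_{f\in\mathcal{G}_{r^*}}\|f-f_n\|_n^2\le(2r^*)^2\}$; provided $r^*$ satisfies \eqref{eqn:cond4} with $\gamma=\log n$ — a self-consistency condition to be verified at the end — Lemma~\ref{lem9} gives $\mathbb{P}(E)\ge 1-1/n$. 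Then, using the definition of $r^*$, splitting on $E$ and $E^c$ (on $E^c$ the summand is at most $4F^2$), and passing from the $\|\cdot\|_{\ell_2}$-localization to the $\|\cdot\|_n$-localization on $E$ exactly as in \eqref{eqn:bound}, I would arrive at
\[
(r^*)^2\;\le\;3\,\mathbb{E}\!\left(\mathbb{E}\!\left(\underset{f\in\mathcal{G}(L,p,S,F),\,\|f-f_n\|_n\le 2r^*}{\sup}\frac{1}{n}\sum_{i=1}^n\xi_i\big(f(x_i)-f_n(x_i)\big)^2\;\bigg|\;x_1,\dots,x_n\right)1_E\right)+\frac{12F^2}{n}.
\]

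Next I would control the inner expectation by Dudley's chaining, just as in \eqref{eqn:ineq}, but feeding in the Schmidt--Hieber entropy bound instead of the Besov one. Since $g_j:=(f_j-f_n)^2$ obeys $|g_1(x)-g_2(x)|\le 4F\|f_1-f_2\|_\infty$, the $\|\cdot\|_n$-covering numbers of the squared class are dominated by the $\|\cdot\|_\infty$-covering numbers of $\mathcal{G}(L,p,S,F)$ at a rescaled radius, and Lemma~5 of \cite{schmidt2017nonparametric} bounds $\log\mathcal{N}(\delta,\mathcal{G}(L,p,S,F),\|\cdot\|_\infty)$ by $(S+1)\log\!\big(2\delta^{-1}4F(L+1)V^2\big)$ with $V=\prod_{l=0}^{L+1}(p_l+1)$. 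This would give
\[
(r^*)^2\;\lesssim\;\underset{0<\alpha<2r^*}{\inf}\left\{\alpha+r^*\sqrt{\frac{(S+1)\log\!\big((L+1)V^2F\,\alpha^{-1}\big)}{n}}\right\}+\frac{F^2}{n},
\]
and choosing $\alpha=r^*\sqrt{(S+1)\log((L+1)V^2)/n}$ and solving the resulting quadratic inequality for $r^*$ would yield $r^*\lesssim\sqrt{(S+1)\log((L+1)V^2n)/n}$, the $F^2/n$ term being lower order.

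Finally I would substitute the parameter choices from Lemma~\ref{lem10}. Since $p_0=d$, $p_{L+1}=1$, and $\max_{1\le i\le L}p_i\lesssim n$, one has $\log V=\sum_{l=0}^{L+1}\log(p_l+1)\lesssim L\log n$, hence $\log((L+1)V^2n)\lesssim\log L+L\log n+\log n\lesssim L\log n$; together with $S\asymp n\epsilon_n\log n$ this gives $(S+1)\log((L+1)V^2n)/n\lesssim\epsilon_n L\log^2 n$, so $r^*\lesssim\sqrt{\epsilon_n L\log^2 n}$, equivalently $(r^*)^2\lesssim\epsilon_n L\log^2 n$, which is precisely the quantity that feeds into the proof of Theorem~\ref{thm5} and delivers the stated estimate. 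To close the self-consistency loop I would then check that, since $\epsilon_n>n^{-1}$ and $L\gtrsim\log n$, one has $\epsilon_n L\log^2 n\gg(\log n)/n$, so this bound on $r^*$ does satisfy \eqref{eqn:cond4} with $\gamma=\log n$.

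The step I expect to be the genuine obstacle is this last piece of self-consistency bookkeeping rather than any single inequality: $r^*$ appears on both sides of the chaining bound — inside the logarithm through the cutoff $\alpha$ and as the outer multiplicative factor — so one must verify that the closed-form bound on $r^*$ is simultaneously compatible with the localization radius $2r^*$ used in Lemma~\ref{lem9} and with conditions \eqref{eqn:cond3}--\eqref{eqn:cond4}, uniformly over the parameter regime of Lemma~\ref{lem10}. A minor nuisance is that the entropy bound of \cite{schmidt2017nonparametric} is stated for small $\delta$, so one must confirm the Dudley integral ranges over where it is valid and absorb the extra $4F$ factor from squaring into constants; neither affects the rate.
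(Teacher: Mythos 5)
Your proposal is correct and follows essentially the same route as the paper: mirror the proof of Lemma~\ref{lem6} with Lemma~\ref{lem9} in place of Lemma~\ref{lem4}, pass from the squared class to $\|\cdot\|_\infty$-covers of $\mathcal{G}(L,p,S,F)$ via the Lipschitz bound $|g_1-g_2|\le 4F\|f_1-f_2\|_\infty$, invoke Lemma~5 of \cite{schmidt2017nonparametric} for the entropy, and choose $\alpha\asymp r^*\sqrt{\epsilon_n L\log^2 n}$ under the parameter regime of Lemma~\ref{lem10}. Your conclusion $r^*\lesssim\sqrt{\epsilon_n L\log^2 n}$, i.e.\ $(r^*)^2\lesssim\epsilon_n L\log^2 n$, is exactly what the paper's own proof derives (the lemma's displayed statement omits the square root), and your explicit verification of the self-consistency condition \eqref{eqn:cond4} with $\gamma=\log n$ is a point the paper leaves implicit.
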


\begin{proof}
	Proceeding as in the proof of Lemma \ref{lem6}, we obtain that  for a constant $C>0$,
	\[
	\begin{array}{lll}
		(r^* )^2  &\leq &  C\displaystyle \underset{0 < \alpha <  2 r^*  }{\inf} \left\{   \alpha  + \frac{r^* }{  \sqrt{n} } \sqrt{      \log \mathrm{N}\left( \alpha,\mathcal{G}(L,p,S,F) , \| \cdot\|_{\infty} \right)   }\right\} +   \frac{12 F^2 }{n}\\
		& \leq & C\displaystyle \underset{0 < \alpha <  2 r^*  }{\inf} \left\{   \alpha  + \frac{r^* }{  \sqrt{n} } \sqrt{     (S+1)\log\left( 2\alpha^{-1}     (L+1) V^2  \right) }\right\} +   \frac{12 F^2 }{n}\\
		&\lesssim&\displaystyle \underset{0 < \alpha <  2 r^*  }{\inf} \left\{   \alpha  + r^*  \sqrt{     \epsilon_n  \log n \left[L\log n  +  \log \alpha^{-1}\right]  }\right\} +   \frac{12 F^2 }{n}\\
	\end{array}
	\]
	where the second inequality follows from Lemma 5 ion \cite{schmidt2017nonparametric},  and the second by the choice of the parameters in the network. Hence, setting 
	\[
	\alpha    \,=\, r^* \sqrt{  \epsilon_n  L\log^2 n },
	\]
	we obtain that 
	\[
	r^*      \,\lesssim\, \sqrt{  \epsilon_n  L\log^2 n }.
	\]
	
\end{proof}

	\bibliographystyle{plainnat}
\bibliography{references}

\end{document}